\NewDocumentCommand{\rat}{}{\mathrm{Rat}}
\NewDocumentCommand{\malg}{ O{} m }{%
	#1\langle\!\!\langle#2\rangle\!\!\rangle
}
\NewDocumentCommand{\yon}{}{\bsy}
\NewDocumentCommand{\concat}{}{%
	\mathchoice
	{\mathop{\scalebox{.75}{$\boldsymbol{+}\kern-.2em\boldsymbol{+}$}}}
	{\mathop{\scalebox{.55}{$\boldsymbol{+}\kern-.2em\boldsymbol{+}$}}}
	{\mathop{\scalebox{.45}{$\boldsymbol{+}\kern-.2em\boldsymbol{+}$}}}
	{\mathop{\scalebox{.25}{$\boldsymbol{+}\kern-.2em\boldsymbol{+}$}}}
}
\def\bang{\boldsymbol{!}}
\NewDocumentCommand{\emptyList}{}{\texttt{[\,]}}
\NewDocumentCommand{\kons}{}{\mathbin{::}}
\NewDocumentCommand{\sfRat}{ O{\clV} }{\mathbf{Rat}_{#1}}
\NewDocumentCommand{\presh}{ m O{\clV} }{\VCat[#2][{#1}^\op,#2]}
\NewDocumentCommand{\psh}{ m O{\clV} }{\VCat[#2][{#1}^\op,#2]}
\NewDocumentCommand{\tranComp}{}{\circ}
\NewDocumentCommand{\ratProf}{ O{\clB} }{\freerig{#1}\emdash\Prof}
\NewDocumentCommand{\TwoTDX}{}{\mathbf{2}\TDX}
\NewDocumentCommand{\tdto}{ O{} }{\mathrel{\xymatrix@C=6mm@1{*{} \ar[r]|(.4)@-{*}^{#1} & *{}}}}
\NewDocumentCommand{\pto}{ O{} }{\mathrel{\xymatrix@C=6mm@1{*{} \ar[r]|(.4)@-{|}^{#1} & *{}}}}
\newcommand{\laurent}[2]{#1(\!(#2)\!)}
\NewDocumentCommand{\tcomp}{ m m m O{x} O{y} O{b} O{\clB} }{\int^{\underline{#6} : {#7}^*} #1(\underline{#3}, #4,#4')[\underline{#6}]\times #2(\underline #6;#5,#5')}
\NewDocumentCommand{\toadd}{ m O{red} }{
	\todo[inline,caption=0,color=#2!10,bordercolor=white]{\color{#2!50!black}#1}
}
\NewDocumentCommand{\Yan}{ O{\clA} }{\Lan_{\yon_{#1}}}
\def\ie{i.e.\@\xspace}
\def\eg{e.g.\@\xspace}
\def\cf{cf.\@\xspace}
\def\Id{\mathrm{Id}}
\NewDocumentCommand{\opid}{ m }{{#1}^\op\times #1}
\newcommand{\fugue}[4]{
	#1(#2,#3)\otimes #1(#2\ltimes #3, #4)
}
\newcommand*\circled[1]{\tikz[baseline=(char.base)]{
		\node[shape=circle,draw,inner sep=2pt] (char) {#1};}}
\def\tab{\text{tab}}
\def\cotab{\text{cotab}}
\def\pow#1{\text{P}({#1})}
\def\twoDash{2\hyp{}}
\def\collage#1#2#3{#1\uplus_{#2}#3}
\NewDocumentCommand{\arar}{O{r} m m}{%
	\ar@<.3em>[#1]^-{#2}\ar@<-.3em>[#1]_-{#3}
}
\def\DTDX{\bbD\TDX}
\def\DProf{\bbP\mathbf{rof}}
\def\VMealy{\clV\Mealy}
\def\Bool{\bfB}
\NewDocumentCommand{\freerig}{ m O{} }{\bfK_{#2}\langle\!\!\langle{#1}\rangle\!\!\rangle} 
\title{Two-dimensional transducers}
\author{Fosco Loregian}
\address{Tallinn University of Technology}
\email{fosco.loregian@gmail.com}
\dedicatory{Dedicated to Bob Paré, on the occasion of his 80$^{th}$ birthday.}
\begin{document}
\begin{abstract}
	We define a bicategory \(\TwoTDX\) whose 1-cells provide a categorification of \emph{transducers}, computational devices extending finite-state automata with output capabilities. This bicategory is a mathematically interesting object: its objects are categories $\clA,\clB,\dots$ and its 1-cells \((\clQ, t) : \clA \tdto \clB\) consist of a category $\clQ$ of `states', and a profunctor
\[\notag
	\vxy{
		t : \clA \times \opid{\clQ} \times (\clB^*)^\op \ar[r] & \Set
	}
\]
where \(\clB^*\) denotes the free monoidal category over \(\clB\). Extending \(t\) to \(\clA^*\) in a canonical way, to each `word' \(\una\) in \(\clA^*\) one attaches an endoprofunctor over the category \(\clQ\) of states, enriched over presheaves on \(\clB^*\).

We discuss a number of other characterizations of the hom-category \(\TwoTDX(\clA,\clB)\); we establish a Kleisli-like universal property for \(\TwoTDX(\clA,\clB)\) and explore the connection of \(\TwoTDX\) to other bicategories of computational models, such as Bob Walters' bicategory of `circuits'; it is convenient to regard \(\TwoTDX\) as the loose bicategory of a double category \(\DTDX\): the bicategory (resp., double category) of profunctors is naturally contained in the bicategory (resp., double category) \(\TwoTDX\) (resp., \(\DTDX\)); we study the completeness and cocompleteness properties of \(\DTDX\), the existence of companions and conjoints, and we sketch how monads, adjunctions, and other structures/properties that naturally arise from the definition work in $\DTDX$.
\end{abstract}

\maketitle
\tableofcontents

\section{Introduction}\label{introduction}
This paper analyzes the properties of a class of computational machines known as \emph{transducers} —an important concept in theoretical computer science, especially in the study of formal languages and automata theory— from a category-theoretic standpoint. Our perspective is that of a \emph{categorification}, building on the classical notion long studied by computer scientists. In this sense, the present work lives within a line of research (cf.\@\xspace \cite{Kasangian1990,kasangian1983cofibrations,Betti1982,Betti1981}, \cite{Guitart1980,guitart1978bimodules,guitart1974remarques}, and \cite{Katis2002}) that the author has been attempting to revive for the last few years (cf.\@\xspace\cite{EPTCS397.1,boccali_et_al:LIPIcs.CALCO.2023.20,loregian2024automata,loregian2025monadslimitsbicategoriescircuits}), advocating the use of \emph{formal category theory} as a foundational framework for automata theory.

This perspective reveals intriguing patterns, both for the category theorist — who may discover concrete instances of concepts traditionally viewed as highly abstract by other mathematicians — and for the automata theorist — who may gain access to a powerful, systematic framework for compositional properties of their familiar structures of interest.

Paré has contributed to this part of category theory with his elegant paper \cite{Par2010} on `Mealy morphisms' between enriched categories. There, he showed that by abstracting appropriately from the standard mathematical definition of a Mealy automaton, one arrives at a bicategory in which the 1-cells occupy a position intermediate between enriched functors and enriched profunctors. More indirectly, Paré's work embodies an aesthetic perspective that resonates strongly with the author, which is the idea that a mathematician should never shy away from asking seemingly naïve questions (or even `two silly ones', as in \cite{pare2021three}), since the answers may reveal unexpected analogies.

\medskip
Here, the `silly question' we seek to address is:
\begin{quote}
	How to convince a mathematician that a transducer is an interesting object to study, aided and motivated by category theory?
\end{quote}
In the pursuit of a (hopefully) satisfactory answer, the paper also engages with several other areas of category theory explored by Paré, including but not limited to the theory of profunctors, double categories \cite{grandispare1999limits}, categories enriched over presheaves \cite{wood1976indicial}, and graded monads, which he touched in \cite{Par2021}. In our account, a \twoDash transducer emerges as a 1-cell in a bicategory \(\TwoTDX\) (or better, a loose morphism in a double category \(\DTDX\)) of `graded-and-indexed' profunctors. The aim of the present work is to make this idea precise, and explore the properties of \(\TwoTDX\) and \(\DTDX\).\footnote{This notational convention will be implicitly assumed to distinguish categories and 2-categories (bold) from double categories (bbold and bold).}

\medskip
Having drawn constant inspiration and insight from Paré's work, it is with great pleasure and gratitude that we submit this work for consideration.
\subsection{Motivation, background and the idea behind transducers}
Let's now turn to the mathematics, starting with a brief introduction on the classical notion of transducer, and how it can be seen as a profunctor.

Shortly put, transducers are computational devices that extend finite-state automata with the ability to produce output; as such, they can be thought of as \emph{translation machines}, or converters, between different languages: at the bare essential, a finite word \(\una =a_1a_2\dots a_n\) (say, in an alphabet \(A\)) goes in, a computation \(t\) happens, and a set of finite words \(t(\una)\) (in a possibly different alphabet \(B\)) goes out. When such set is a singleton, \(t\) is called \emph{deterministic}.

So a transducer \(t\) can be thought of as a finite-state machine \emph{equipped with a dictionary}: for each transition \(x\xto\una x'\) between states induced by the word \(\una\), \(t\) produces a corresponding (set of) output(s). As such, transducers can serve a myriad of purposes, such as models for string-processing systems ---simple character encoders or complex signal processors-- and they find widespread application in many areas.

State-of-the-art research has already attempted to study transducers through a category-theoretic lens, for example using tools from the theory of coalgebras \cite{silva2010subsequential,adamek2011coalgebraic}. The theory lends itself to many generalizations and variations, such as \emph{weighted} \cite{mohri2001weighted} or \emph{probabilistic} \cite{eisner2002parameter} transducers.

Here we take a deeper category-theoretic stance, motivated by the fact that (despite their broad applicability and interest from the community of algebraists, see \eg \cite{Choffrut2004}), the existing literature does not seem to expand on the idea that, as dictionaries (and more importantly, databases, \cf \cite{spipacchione}) can be fruitfully thought as profunctors, so should be transducers: given alphabets $A,B$, a profunctor from \(A^*\) to \(B^*\) assigns to each word \((w,w') :  A^*\times B^*\) a set \(p(w,w')\), which can be thought of as the set of all ways one can effectively translate \(w\) into \(w'\) passing from \(A\) to \(B\).

The fact that transducers admit a compositional structure (translation can happen in series) and a monoidal structure (translation can, at least in principle, happen in parallel) is mentioned by many (\cf for example \cite[§3]{Mohri2004}) but isn't made into a precise assertion of compositionality/monoidality of a category of which transducers are arrows. 

In this work we take the above analogy `profunctors \(\approx\) dictionaries' seriously, and we aim to describe said compositional and monoidal structure. The answer we find unveils several analogies, requiring us to dive into delightfully sophisticated category theory.

\medskip
To attain this goal, the classical notion of (1-)transducer is first categorified, and transducers are seen as 1-cells of a bicategory (or more conveniently, as loose morphisms of a double category) \(\TwoTDX\) of \emph{\twoDash transducers}; then a bicategory of (1-)transducers can be seen as made by \twoDash transducers between discrete categories, and having discrete state categories. We then turn to explore various properties of \(\TwoTDX\), \emph{qua} bicategory and horizontal bicategory of a double category \(\DTDX\); in particular, we discuss several universal constructions, as well as monads and adjunctions in \(\DTDX\), and the relation of $\DTDX$ with other models for computational (`Mealy') machines.

\medskip
Now that the plan has been disclosed, let's start at the beginning. The idea of a (nondeterministic) transducer as a dictionary is mathematically captured defining it as a tuple \(T = (Q, A, B, t)\) where
\begin{enumtag}{td}
	\item \label{td_0} \(Q\) is a set of \emph{states} (often called a state `space' even when no topology whatsoever is involved),
	\item \label{td_1} \(A\) is the \emph{input} alphabet (on which we consider the set of words \(A^*\)),
	\item \label{td_2} \(B\) is the \emph{output} alphabet (on which we consider the set of words \(B^*\)),
	\item \label{td_3} \(t \subseteq A \times Q \times Q\times B^* \) is a \emph{transition relation}, specified so that there is a unique extension \(\bar t\) to a relation \(A^* \times Q \times Q\times B^* \).\footnote{More than often, \(Q,A,B\) are \emph{finite} sets; from a purely mathematical point of view, this restriction is a bit unnatural ---all the more because at least \emph{some} infinitary constructions are needed to make the theory expressive enough. There is a certain amount of work in re-enacting automata theory in minimal assumptions, such as `an elementary topos with a NNO', cf \cite{Iwaniack2024}. Grothendieck toposes offer a better framework, and it has been proposed that they should be the right setting to subsume `the technical details of automata theory', \cf \cite{hora2024topoiautomataitopoi,boccali2023semibicategory}. In short, this approach is motivated by the general fact that given a well-behaved enough endofunctor \(F\) on a topos, the \(F\)-coalgebras form themselves a topos.}
\end{enumtag}
As specified in \ref{td_3}, the transition relation \(t\) can be rewritten as a function of type
\[\label{generic_1_tdx}\vxy{
		A \times Q\times Q \ar[r] & \pow{B^*}
	}\]
	into the powerset of $B^*$, which in turn can be interpreted as a function assigning a \(Q\)-by-\(Q\) matrix of elements in the semiring \(\pow{B^*}\) to each element of \(A\), and by extension to each list \(w :  A^*\), so that the matrices \(t_a\) form a \emph{representation} of the free monoid \((A^*,\raisebox{2pt}{\(\concat\)},\emptyList)\) into the semiring \(\text{Mat}(Q,\pow{B^*})\) of such \(Q\)-by-\(Q\) matrices, \ie a monoid homomorphism
\[\label{generic_1_tdx_again}\vxy{t : (A^*,\raisebox{2pt}{\(\concat\)},\emptyList) \ar[r] & (\text{Mat}(Q,\pow{B^*}),\circ,\Id_Q).}\]
Let's consider a very small example, which will clarify the situation to those who meet the notion of transducer for the first time; let \(A = \{a,b\},B = \{x\}\)
denote the alphabets, \(Q=\{1,2\}\) the state space. A transducer  is specified in terms of two 2-by-2 matrices
\[\label{generic_1_tdx_matrc} t(a) = \begin{pmatrix}
		f_{11}(x) & f_{12}(x) \\
		f_{21}(x) & f_{22}(x)
	\end{pmatrix}
	\qquad\qquad t(b)=\begin{pmatrix}
		g_{11}(x) & g_{12}(x) \\
		g_{21}(x) & g_{22}(x)
	\end{pmatrix}
\]
whose entries \(f_{ij}(x),g_{ij}(x)\), dependent on a single indeterminate \(x\), are understood as characteristic functions \(\{x\}^*\to \{0,1\}\), belonging to the formal power series ring \(\malg[\Bool]{x}\) of (noncommutative, but when \(B\) is a singleton this is not evident) polynomials over the set \(B\) of indeterminates, and Boolean coefficients.\footnote{To each subset \(S\subseteq B^*\), denoted \(S=\{x^{n_s}\mid s :  S\}\), one associates the series \(\sum_{s :  S} x^{n_s}\); clearly, each Boolean-valued series of this form determines a subset of \(B^*\) as its support. So, the series \(0\) corresponds to the empty set \(\varnothing\subset B^*\), the series \(x\) corresponds to the subset \(\{[x]\}\) (where \([x] := x\kons\emptyList\)), the series \(x+x^2\) corresponds to \(\{[x],[x,x]\}\), etc.} Just for the sake of concreteness, let's say \(t(a)=\left(\begin{smallmatrix}
		1	&	0 \\
		0	&	x+x^2
	\end{smallmatrix}\right)
,t(b)=\left(\begin{smallmatrix}
		1	&	x \\
		x	&	x
	\end{smallmatrix}\right)\).

To such an arrangement one associates a directed graph representing the \emph{dynamics} of \(t\), where there is a node for each state \(q :  Q\) and an edge labeled \(q\xto{i/f} q'\), decorated by \((i,f) :  A\times \malg[\Bool]{x}\) if and only if \(t(i)_{qq'}=f\): for the \(t(a),t(b)\) above, we then have the diagram
\[\label{gph_di_un_tdx}
	\begin{tikzpicture}[>={Stealth[round]}, shorten >=1pt, node distance=3.5cm, on grid, auto,baseline=(current bounding box.center)]
		\node[state,minimum size=0pt] (q1) {1};
		\node[state,minimum size=0pt] (q2) [right=of q1] {2};
		\path (q1) edge[loop above] node[font=\tiny] {$a/1$} ()
		edge[loop below] node[font=\tiny] {$b/1$} ();
		\path (q2) edge[loop above] node[font=\tiny] {$a/x+x^2$} ()
		edge[loop below] node[font=\tiny] {$b/x$} ();
		\path[->] (q1) edge[bend left=20] node[font=\tiny] {$b/x$} (q2)
		(q2) edge[bend left=20] node[font=\tiny] {$b/x$} (q1);
	\end{tikzpicture}
\]
Now, it's clear how to interpret \eqref{gph_di_un_tdx} in terms of an elementary notion: once a representation \(t\) of \(A^*\) as \eqref{generic_1_tdx_again} is regarded as a functor \(\Sigma(A^*) \to\Set\) out of the monoid $A^*$, regarded as a single-object free category \(\Sigma A^*\), one can turn \(t\) into an opfibration \(P_t : \clE[t]\to \Sigma (A^*)\), via the \emph{Grothendieck\hyp{}Bénabou} (or `category of elements') \emph{construction} \cite[§7]{Benabou2000}.

As such, \(\clE[t]\) is the category of elements, or \emph{action category} (by which we mean the non-groupoidal equivalent of action \emph{groupoids}, \cf \cite[Definition 4.7]{Ibort2019}) of \(t\), having objects the states \(q :  Q\) and morphisms the \(t(a)/f_{qq'} : q\to q'\) above. Given the equivalence of categories between functors \(\Sigma (A^*) \to\Set\) and discrete opfibrations over \(\Sigma (A^*)\), the action category faithfully represents the evolution in discrete time of the representation \(t\).

It is now an easy observation, first put in print by Walters \cite{Walters1989}, that if \(t\) is a representation of a free \emph{monoid}, then \(\clE[t]\) is a free \emph{category}: in particular, in our example,
\begin{remark}
	The action category \(\clE[t]\) associated with the copresheaf \(t\) of \eqref{generic_1_tdx_matrc} is free on the graph \eqref{gph_di_un_tdx}.
\end{remark}
This fibrational approach has a certain established history developed by Walters and extended by others, \cf \cite{Walters1989,mellies:hal-04399404,entics:10508}. However, this is just part of the story: every transducer \(t\) yields a \(2\times 2\) matrix for each word \(w : \{a,b\}^*\), uniquely determined as the product of the generating matrices \(t(a),t(b)\) in \eqref{generic_1_tdx_matrc}; for example, to the words `\(ab\)' and `\(ba\)' one associates the product matrices
\[t(ab)=t(a)t(b)=\begin{pmatrix}
		1       & x       \\
		x^2+x^3 & x^2+x^3
	\end{pmatrix}\qquad
	t(ba)=t(b)t(a)=\begin{pmatrix}
		1 & x^2+x^3 \\
		x & x^2+x^3
	\end{pmatrix}\]
and thus additional edges appear in the above diagram (of which we draw only some, shading in gray the generating edges):
\begin{center}
	\begin{tikzpicture}[>={Stealth[round]}, shorten >=1pt, node distance=3.5cm, on grid, auto]
		\node[state,minimum size=0pt] (q1) {1};
		\node[state,minimum size=0pt] (q2) [right=of q1] {2};
		\path (q1) edge[gray!50,loop above] node[font=\tiny] {$a/1$} ()
		edge[gray!50,loop below] node[font=\tiny] {$b/1$} ()
		edge[loop left] node[font=\tiny] {$ab/1$} ();
		\path (q2) edge[gray!50,loop above] node[font=\tiny] {$a/x+x^2$} ()
		edge[gray!50,loop below] node[font=\tiny] {$b/x$} ()
		edge[loop right] node[font=\tiny] {$ba/x^2+x^3$} ();
		\path[->] (q1) edge[gray!50,bend left=30] node[font=\tiny] {$b/x$} (q2)
		(q2) edge[gray!50,bend left=30] node[font=\tiny] {$b/x$} (q1)
		(q2) edge node[font=\tiny] {$ab/x^2+x^3$} (q1);
	\end{tikzpicture}
\end{center}
Thus, in addition to being the total space of a discrete opfibration, in the above class of examples the action category \(\clE[t]\) is also \emph{graded}, in the sense that the composition of edges \(\circled{1}\xto{a/1}\circled{1}\xto{b/1}\circled{1}\) is the edge \(\circled{1}\xto{ab/1\cdot 1}\circled{1}\) arising as product of labels, and the same is true for the composition \(\circled{2}\xto{a/x+x^2}\circled{2}\xto{b/x}\circled{1}\), labeled by \(ab\big/(x+x^2)\cdot x\); more generally, given a map like \eqref{generic_1_tdx} and a word \(w=(a_1,\dots,a_n) :  A^*\), we express $\big(t(w)\big)_{ij}$ as a product
\[\label{profi_compozia}\big(t(w)\big)_{ij} = \big(t(a_1)\dots t(a_n)\big)_{ij} = \sum_{q_1,\dots,q_{n-1} :  Q} t(a_1)_{i,q_1}\cdot t(a_2)_{q_1,q_2} \cdots  t(a_n)_{q_{n-1},j},\]
where the sum and the iterated product is understood in terms of the semiring operations.

Categorification (colloquially known as `the left adjoint to the process of forgetting category theory') now provides us with an idea to make this story more appealing for a category theorist: a map like \eqref{generic_1_tdx}, decomposed as in \eqref{profi_compozia}, is a particular instance of a profunctor
\[\label{tru_trandu}\vxy{t : \clA\times\opid\clQ \ar[r] & \Set^{(\clB^*)^\op}}\]
when \(\clA,\clB,\clQ\) are discrete categories (regarded as categories enriched over \(\Bool=\{\varnothing,\top\}\)); in this perspective \eqref{profi_compozia} expresses the profunctor composition
\[\xymatrix{
	\clQ \ar[r]|-@{|}^{t(a_1)}& \clQ \ar[r]|-@{|}^{t(a_2)} & \dots\ar[r]|-@{|} & \clQ\ar[r]|-@{|}^{t(a_n)} & \clQ.
	}\]
So, a transducer can be formally described as a pair \((\clQ, t)\), consisting of a category and a profunctor as in \eqref{tru_trandu}; viewing $(\clQ,t)$ as a 1-cell \(\clA \tdto \clB\), any two such cells \((\clP,s) : \clA \tdto \clB\) and \((\clQ,t) : \clB \tdto \clC\), can be composed to form a 1-cell \((\clQ \times \clP, T\circ s) : \clA \tdto \clC\),\footnote{The profunctor \(T\circ s\) of type \(\clA\times\opid{(\clQ\times\clP)}\times(\clB^*)^\op \to\Set\) can be defined as a suitable composition of profunctors; what is interesting about this notion of composition, that we spell out in detail in \eqref{compozia}, is that a transducer \((\clQ,t) : \clA\tdto\clB\) can be regarded as a profunctor indexed over \(\clA\), graded over \(\clQ\), and enriched over \(\clB\).}  thus defining the bicategory \(\TwoTDX\), which is the subject of our interest. 

\medskip
Several notions are needed to make this precise, the most important of which is the \emph{grading} expressed by the fact that the category of states of a composite transducer is the product of the categories of states of its components.

We can introduce such grading into the picture in the following way (a perspective that we first learned from Paré's paper \cite{Par2021} on `morphisms of rings').
Recall what is known as a \emph{graded monad} (\cf \cite{fujii20192categorical} for a detailed, modern account; the definition however appears very early in the work of Bénabou):
\begin{definition}[Graded monad]
	Let \((\clM,\otimes,1)\) be a monoidal category. An \emph{\(\clM\)-graded monad} \((T,\eta,\mu)\) consists of a lax functor \(T : \clM \to\Cat\); as such, a graded monad comprises
	\begin{itemize}
		\item an endofunctor \(T_M : \clX\to\clX\) of a category \(\clX\), for every object \(M : \clM_0\);
		\item a graded unit (the \emph{unitor}) \ie a natural transformation of type
		      \[\vxy{\eta : \id_\clX \ar@{=>}[r] & T_1}\]
		      where \(1\) is the monoidal unit of \(\clM\);
		\item a graded multiplication (the \emph{compositor}), a family of natural transformations of type
		      \[\vxy{
				      \mu_{MM'} : T_M\circ T_{M'} \ar@{=>}[r] & T_{M\otimes M'},
			      }\]
	\end{itemize}
	subject to associativity and unitality axioms (see \cite[Definition 12]{Par2021}, or \cite[2.1.1]{fujii20192categorical}, for the explicit definition).
\end{definition}
Graded monads appear in our discussion in two (related) ways: first, there is a graded monad of matrices over any fixed commutative (semi)ring \(R\), a fact witnessed by how every \(p\times p\) matrix-of-matrices \(\bsM  :  \text{Mat}_p(\text{Mat}_q(R))\), each entry of which is a \(q\times q\) matrix \(M_{ij} :  \text{Mat}_q(R)\), can be flattened to a \(pq\times pq\) matrix. More precisely,
\begin{remark}[\protect{\cite[Proposition 13]{Par2021}}]\label{scalo_a_grado}
	There is a graded monad \(\text{Mat} : \bbN \to \Cat\) from the multiplicative monoid of natural numbers (regarded as a single-object, 2-discrete bicategory), into \(\Cat\), picking the category of rings and where each \(\text{Mat}_n\) acts sending a ring \(R\) to the ring of \(n\times n\) matrices with entries in \(R\). The monad unit consists of the isomorphism \(R\to\text{Mat}_1(R)\cong R\), and the multiplication is the function `flattening' a matrix of matrices \(\bsM\) into a matrix of scalars \(M\), having as first row the concatenation of first rows of all matrices in the first row of \(\bsM\), as second row the concatenation of all second rows of all matrices in the first row\dots, etc.: if \((p,q)=(2,2)\) for example, the multiplication acts as
	\[\label{flattanza}
		\begin{bmatrix}
			\left[
				\begin{smallmatrix}
					0 & 1 \\
					2 & 3 \\
				\end{smallmatrix}
			\right] & \left[
				\begin{smallmatrix}
					4 & 5 \\
					6 & 7 \\
				\end{smallmatrix}
			\right]          \\[.2em]
			\left[
				\begin{smallmatrix}
					8 & 9 \\
					10 & 11 \\
				\end{smallmatrix}
			\right] & \left[
				\begin{smallmatrix}
					12 & 13 \\
					14 & 15 \\
				\end{smallmatrix}
			\right]          \\
		\end{bmatrix}
		\mapsto
		\left[\begin{smallmatrix}
				0 & 1 & 4 & 5 \\
				2 & 3 & 6 & 7 \\
				8 & 9 & 12 & 13 \\
				10 & 11 & 14 & 15
			\end{smallmatrix}\right]\]
\end{remark}
In a similar fashion, one can resort to a categorified analogue of \autoref{scalo_a_grado} to compose two \twoDash transducers by multiplying the grading categories, meaning that the composition of \((\clP,s) : \clA \tdto\clB\) with \((\clQ,t) : \clB\tdto\clC\) is of the form \((\clQ\times\clP,\blank) : \clA \tdto\clC\).

But in addition to this, graded monads also appear when one considers monads in the bicategory \(\TwoTDX\), or better, in the double category \(\DTDX\). For instance, in \autoref{monad_in_TwoTDX} we prove the following result.
\begin{theorem*}
	A monad \((\clQ,t) : \clA\tdto\clA\) in \(\TwoTDX\) consists of a promonad on \(\clA^*\), such that \(\clQ\) acquires a monoidal structure \(\boxtimes\), and \(t\) is graded over \(\opid{\clQ}\) (equipped with the component-wise monoidal structure \((A,B)\boxtimes(C,D):=(A\boxtimes^\op C,B\boxtimes D)\) induced by \(\clQ\)).
\end{theorem*}
We now give a more detailed account of the results contained in the present work.
\subsection{Outline of the paper}
In order to prepare the ground for a categorification of the notion of transducer, we start \autoref{bic_2_transducers} by setting all the classical pieces and definitions in place; this will motivate the need to regard a map like \eqref{generic_1_tdx}
as a functor of type
\[\vxy{t : \clA \times\opid\clQ \ar[r] & \freerig\clB}\]
where \(\freerig\clB\) is a shorthand for the presheaf category over \(\clB^*\);
starting from here, we define the bicategory \(\TwoTDX\). This is a kind of structure drawing from diverse parts of category theory, such as classical enriched profunctors \cite[6.2.10]{Bor2}, and more modern attempts to categorify free semirings,\footnote{A rather well-established branch of monoidal category theory \cite{Kelly1980,laplaza1972coherence,elgueta2020groupoid,baez2023schurfunctorscategorifiedplethysm,baez20242rigextensionssplittingprinciple} that the author touched upon in \cite{Loregian2023,loregian2024automata}.} and with the theory of graded monads \cite{McDermott2022,10.1007/978-3-031-16912-0_4,Gaboardi2021,Orchard2020,fujii20192categorical,Dorsch2018GradedMA,Fujii2016,milius_et_al:LIPIcs:2015:5538}.

After some preliminaries, in \autoref{1_transduc} we recall what kind of 1-dimensional structure is a transducer: a certain monoid homomorphism \(t : A^* \to \text{Mat}(Q, \pow{B^*})\) for suitable alphabets \(A, B\) and a finite state space \(Q\), where \(\pow{B^*}\) denotes the free quantale on \(B^*\) (see \autoref{1_transduc}). These gadgets are naturally understood as \emph{matrices of formal power series}, and we leverage on this idea in \autoref{definition_2_transducer}, to define a \emph{\twoDash transducer} as a strong monoidal functor \(t : \clA^* \to \freerig\clB\emdash\Prof(\clQ, \clQ)\), where \(\freerig\clB\) is the free cocompletion \cite{DAY2007651} of the free monoidal category \(\clB^*\) on \(\clB\) (this is called the \emph{free cocomplete 2-rig} on \(\clB\), \cf \cite{Loregian2023}). Such $t$ form the 1-cells of a bicategory $\TwoTDX$.

Furthermore, when $\clA,\clB,\dots$ are discrete categories, one can think of them as the sub-2-category $\TDX$ spanned by the 2-transducers where the image of $t : A\times\opid Q\times (B^*)^\op\to\Set$ factors through the subcategory \(\{\varnothing,\top\} =: \Bool\) of truth values; in this case a \twoDash transducer boils down exactly to a transducer as classically defined --a graded version of profunctor composition defines composition of 1-cells for both \(\TDX\) and \(\TwoTDX\). To the best of our knowledge, the fact that 1-transducers are composable, although obvious, this doesn't seem to have been explicitly spelled out so far. 

\medskip 
The progression in categorification  similar to what happens with relations, regarded as special profunctors, and profunctors, regarded as the loose bicategory of a (pseudo) double category:
\[\vxy[@R=4mm@C=2cm]{
	\Rel \ar@{}[d]|-{\rotatebox{90}{\(\supseteq\)}}&\Prof \ar@{}[d]|-{\rotatebox{90}{\(\supseteq\)}}& \DProf \ar@{}[d]|-{\rotatebox{90}{\(\supseteq\)}} \\
	\TDX \ar@{^{(}->}[r]_-{\sm[x]{\text{spanned by}\\ \text{discrete objects}}}& \TwoTDX \ar@{^{(}->}[r]_-{\sm[x]{\text{spanned by}\\ \text{tightly discrete cells}}}& \DTDX.
	}\]
Moreover, 2-cells in the bicategory \(\TwoTDX\) are \emph{transducer transformations} —natural transformations between profunctors, respecting the grading via reindexing functors (\autoref{definition_the_bicategory_bitwotran}). The coherence conditions imposed by these 2-cells are studied in detail, we show how they become simple consequence of a double category of which \(\TwoTDX\) is the bicategory of tightly discrete cells; see \autoref{2cell_theta_triangle}, \autoref{whiskering_2cells}, \autoref{dbcat_of_tdx}. The entirety of \autoref{properties} is devoted to the study of universal constructions in the double category $\DTDX$ (limits and colimits, and the existence of companions and conjoints).

\medskip
We show in \autoref{remark_many_versions_of_a_category} that the hom-category \(\TwoTDX(\clA,\clB)\) admits several equivalent characterizations, given by the multiple ways in which a map of type
\[\vxy{
		\clA\times\opid\clQ\times(\clB^*)^\op \ar[r] & \Set
	}\]
can be curried and uncurried (\autoref{tdx_char_1}--\autoref{tdx_char_6} of \autoref{remark_many_versions_of_a_category}).

The least straightforward of such characterizations is expanded in \autoref{locally_graded_coKleisli}, finding that \(\TwoTDX(\clA,\clB)\) is a `graded coKleisli category' for a (graded) monad \(K_\clQ=(\blank\times\opid\clQ)\). These perspectives are all useful to concisely derive various (local, \ie hom-category-wise, and global) properties of \(\TwoTDX\); \cf \autoref{tdx_properties_and_corollaries} and \eqref{liftaggio_at_fibs} observing that essentially by definition each hom-category \(\TwoTDX(\clA,\clB)\) is cocomplete, and that pre- and post-composition preserve colimits (thus, right liftings and right extensions exist), and that \(\TwoTDX\) is fibred over \(\Cat\), inheriting an enrichment over \(\Psd(\Cat^\op,\CAT)\) (one can then conjecture that \(\TwoTDX\) is the truncation of a 3-category; the obstruction in this sense is spelled out in \autoref{nota_2_fun}). The characterization as graded coKleisli category of \(K_\clQ\) allows to determine that adjoint pairs in \(\TwoTDX\) are poorly behaved: they can only arise from ordinary adjunctions in \(\Cat\), with trivial state categories (\autoref{adjunction_in_coKl} and \autoref{very_few}).

\medskip
As it is common for other bicategories where 1-cells are profunctors, hom-categories \(\TwoTDX(\clA,\clB)\) between finite objects have a rather direct combinatorial interpretation. Let \(\clI\) be the terminal category; in \autoref{linear_proalgebra} we exhibit how the category \(\TwoTDX(\clI,\varnothing)\) behaves as a category of pairs (category, endoprofunctor on that category). This is in analogy with linear algebra, where a pair \((V,f)\) of a vector space and a linear map over said vector space is just the data of a \(k[X]\)-module structure on \(V\). In \autoref{transducers_low_cardinality}, we characterize transducers of type \(\varnothing\tdto\clI\), \(\clI\tdto\clI\), \(\varnothing\tdto\varnothing\) and the way they naturally act on \(\clL\); this has connections with the category of \(\bbN\)-graded sets, and with the construction of free promonads on an endoprofunctor, see \eqref{nbdy_expect_free_promonad}.

To add to the compositional structure of \(\TwoTDX\), we describe a monoidal structure on the locally posetal bicategory \(\TDX\) of 1-transducers, turning \(\TDX\) into a (strict) monoidal 2-category (\autoref{sec_monoidality}); such 2-monoidal structure heavily relies on the existence of a tensorial strength for the presheaf construction, \cf \autoref{stren}. If the composition of 1-cells accounts for the possibility to sequentially stack `graded processes', the monoidal structure accounts for a way to stack them in parallel, compatibly with (\ie functorially with respect to) the former operation. Something similar happens for other 2-categories of `processes', \cite{Katis2010,Katis1997,EPTCS397.1,boccali_et_al:LIPIcs.CALCO.2023.20,boccali2023semibicategory,openTransitionSystems21}, which ought to be monoidal bicategories.

\medskip
The theory of monads in \(\TwoTDX\) is touched upon in \autoref{whatsa_monad}. We show that a monad in \(\TwoTDX\), \ie a functor
\[\vxy{
		t : \clA\times\opid\clQ \times (\clA^*)^\op \ar[r] & \Set
	}\]
subject to monad axioms, carries a fairly rich structure: it is a \emph{monoidal promonad} on \(\clA^*\) and \emph{graded} over its state category \(\clQ\), which carries a monoidal structure induced by the multiplication and unit cells (\autoref{monad_in_TwoTDX}). The appearance of a grading is a known phenomenon when studying the formal theory of monads for such bicategories of processes; our recent work \cite{loregian2025monadslimitsbicategoriescircuits} outlines that a monad in a pseudo double category where 1-cells are Mealy automata consists of a process where, on top of the alphabet acting on states, states act on the alphabet, and these two actions `cohere' in a suitable sense.

This fact bears an evident resemblance to the fact that (\cf \cite{10.1145/3022670.2951939}) a distributive law between monads \(T : M\to\Cat,S : N\to\Cat\) graded by monoids \(M,N\) respectively boils down to a matching structure for \(M,N\) (\cf \cite{Agore_2014,Agore2009forfinite,Agore_2009,Agore_2010}), and the resulting composite monad \(ST\) is graded over the `bicrossed' (or `Zappa-Sz\'ep') product \(M\bowtie N\) of \(M,N\).

In \autoref{sec:more-general} we show how \(\TwoTDX\) relates with other bicategories of processes; we show how \(\TwoTDX\) equips the bicategory \(\KSW\) of \cite{Katis1997} with proarrows \cite{rosebrugh1988proarrows,wood1985proarrows,wood1982abstract}, in the sense that there is a locally fully faithful pseudofunctor \(\Pi : \Cat \to \TwoTDX\) so that every 1-cell in the image of \(\Pi\) has a companion and a conjoint in \(\TwoTDX\); interestingly, \(\Pi\) splits as a composition of two locally fully faithful pseudofunctors, neither of which is a proarrow equipment. This is the content of \autoref{equippy}. We also draw a connection with Guitart's category \(\MAC\) of machines described in \cite{guitart1974remarques} (but see \cite[§2]{EPTCS397.1} for a more modern cut on the construction, as well as for the relations between \(\Mly\) and \(\MAC\)).

\medskip
The paper ends with a speculation, in \autoref{future}, connecting profunctors-as-matrices with the classical theory of differential equations with regular singularities \cite{Wasow1965, Levelt1971,Boalch2001, Sabbah2007,Deligne1970, Malgrange1991} and differential algebra at large; in simple terms, there are settings where it is common to study systems of differential equations of type
\[Y'(z) = A(z)Y(z),\label{deligname}\]
where \(Y(z)\) is an \(n\)-tuple of indeterminate functions, and \(A(z)\) is a \(n\times n\) matrix of meromorphic functions (so, series in the field of fractions of formal power series), in the complex variable \(z\).

If one is willing to abandon some motivation coming from the theory of state machines, equation \eqref{deligname} can be categorified as follows: let \(\clQ=\Bij\) be the category of finite sets and bijections (so the free \emph{symmetric} monoidal category \(\clI^\sigma\) on a single generator), then one can consider
the category \(\clV_\clB:=\freerig\clB[\sigma]\) as base of enrichment, and the category of \emph{\(\clV_\clB\)-enriched combinatorial species} as the functor category with objects \(Y : \clQ^\op\to\freerig\clB[\sigma]\).

Fix any such functor \(Y : \clQ^\op\to\freerig\clB[\sigma]\), a profunctor \(A : \opid\clQ \to \freerig\clB[\sigma]\), and an equation like
\[\label{diffeq_delignosa_in_spc}\partial Y \cong A\otimes Y\]
can be interpreted as an isomorphism between the derivative of \(Y\) at the LHS of the equation, in the sense of the category of Joyal species,\footnote{The derivative of a species is defined as \(\partial Y[n]  = Y[n+1]\), \cf \cite[§1.4]{bergeron1998combinatorial}, \cf also \cite{rajan1993derivatives} for a neat elementary introduction and its categorical properties.} and the `matrix product'
\[A\otimes Y =\int^{n:\Bij} A(n,\blank)\otimes Yn : \clQ^\op\to\freerig\clB[\sigma]\]
at the RHS, now that \(A\) can be regarded as a \emph{symmetric \twoDash transducer} \(\clI\tdto[\sigma]\clB\), \ie a functor
\[\vxy{
		\clI^{\sigma}\times\opid\clQ \ar[r] & \freerig\clB[\sigma],
	}\]
strong monoidal in the first argument (because together with an \(A\) as in \eqref{deligname}, we access all its iterates).

From a distance, this resembles the notion of monoidal distributor \cite[2.1]{Gambino2017}, of which we would like to think the theory of \twoDash transducers as some sort of noncommutative, graded analogue. For the moment, however, we leave this question open for future work.
\section{The bicategory of \twoDash transducers}\label{bic_2_transducers}
We begin by recalling the classical construction of transducers, based on \cite[III.6]{Berstel1979} and \cite[2.3]{Choffrut2004}, with adaptations to clarify the categorification process introduced in \autoref{definition_the_bicategory_bitwotran}. Additional comprehensive references for the algebraic approach to transducers include \cite{book91519310,Kuich1985yp,Salomaa1978-sx}.

\medskip
Let \(A\) be a set (an `input alphabet'), and a base semiring \(\bbK\).\footnote{Often in the following \(\bbK\) is the set of Booleans, equipped with logical XOR and AND operations.} Denote as
\[A^* := \sum_{n\ge 0}A^n = 1 + A + A\times A + \dots + A^n + \dots\]
the countable coproduct of all \(n\)-fold products \(A^n\), for \(n\ge 0\). Elements of \(A^*\) are finite ordered lists. The set \(A^*\) is equipped with a monoid structure given by concatenation of lists, denoted as an infix symbol \(\raisebox{2pt}{\(\concat\)}\). This gives \(A^*\) the universal property of the \emph{free monoid} on the \emph{carrier set} \(A\).

The construction \(A\mapsto A^*\) is a functor \(\Set\to\Mon\) into the category of monoids, left adjoint to the functor \(\Mon\to\Set\) that forgets the monoid operation.
\begin{definition}[Monoid algebra of \(A\)]
	\label{definition_monoid_algebra_of_a}
	The \emph{monoid algebra} of \(A\) (with coefficients in \(\bbK\)) is the set, denoted \(\malg{A}\), of all functions \(f : A^*\to\bbK\) equipped with pointwise sum in \(\bbK\) as addition, and \emph{Cauchy convolution}
	\[(f \odot g)(l) = \sum_{p \concat q = l} f(p) \cdot g(q)\]
	as multiplication. As such, \((\malg{A},+,\odot)\) is itself a semiring with additive unit the constant at 0, and multiplicative unit the `Dirac delta' of the empty list, \(\delta([]) = 1\) and \(\delta(a\kons as)=0\).

	Note that \(\malg{A}\) has the universal property of the \emph{free noncommutative \(\bbK\)-algebra} on the set \(A\).
\end{definition}
On \(\malg{A}\) there's also a \emph{restricted star} operation, \ie a partial function \((\firstblank)^\star : f\mapsto f^\star := \sum_{n\ge0} f^{\odot n}\), where \(f^{\odot n} := f \odot \dots \odot f\), \(n\) times,
defined only on \emph{reduced} elements (the \(f\)'s such that \(f\emptyList = 0\)).\footnote{The restriction on the domain of \(\star\) is obviously due to the fact that the star of \(f(X)=a_0+Xg(X)\) is \(\Big(\sum_{k\ge 0}a_0^k\Big) + Xh(X)\), and thus, if \(a_0\ne 0\), additional conditions of boundedness or a topology on \(\bbK\) might be necessary to ensure the existence of \(f^\star\). Take two examples: the star of \(f(X)=\frac 1{1-X}\) (real coefficients) is the Laurent series \(\frac{X-1}X\), and the star of \(f(X)=1\) (again real coefficients) doesn't exist.}
\begin{definition}[Rational series on \(A\)]
	\label{definition_rational_series_on_a}
	The family \(\rat_\bbK(A^*)\) of \emph{rational series} over the semiring \(\bbK\) is the
	smallest subset \(S\) of \(\malg{A}\) such that
	\begin{itemize}
		\item the constant series 0 belongs to \(S\);
		\item all `monomial' terms \(k(a \kons \emptyList)\) for \((a,k)  :  A\times\bbK\) are in \(S\);
		\item if \(f,g :  S\) then \(f+g, f\odot g\) are in \(S\);
		\item if \(f :  S\) is reduced, then \(f^\star :  S\).
	\end{itemize}
\end{definition}
\begin{definition}
	\label{boolean_rational_series}
	If \(\Bool=\{0,1\}\) is the set of Booleans, given a set \(X\), \(\rat_\Bool(X)\) is the subset of rational Boolean-valued series inside the ring \(\malg[\Bool]{X}\).
\end{definition}
\begin{remark}
	\label{boolean_series_powerset}
	Note that \(\malg[\Bool]{X}\) is naturally identified with the powerset of \(X^*\), and thus characteristic functions \(\chi_U : X^* \to\Bool\) of subsets \(U\subseteq X^*\) are elements of \(\malg[\Bool]{X}\) in a natural way.
\end{remark}
\begin{remark}\label{free_quant_remark}
	The set \(\malg[\Bool]{X}\) exhibits the universal property of the \emph{free quantale} over the set \(X\). In fact, \(X^*\) is the free monoidal category over the discrete category \(X\), and \(\malg[\Bool]{X}\) is just the presheaf category over it, cocomplete (since it is a lattice) and symmetric monoidal closed (since it is equipped with pointwise concatenation of subsets, \ie Day convolution of \(\Bool\)-enriched presheaves).
\end{remark}
\begin{definition}
	\label{rational_relations}
	Given two sets \(A,B\) one defines the \emph{rational relations} \(R\subseteq A^*\times B^*\) as the smallest subset of \(\Bool^{A^*\times B^*}\) containing the singletons and closed under the operations of (countable) subset union, Day convolution, defined for relations \(R,S : A^*\times B^* \to \Bool\) as
	\[(R\odot S)(\unx,\uny) =\bigvee_{\substack{\una\concat \una'=\unx\\\unb\concat \unb' = \uny}} (\una \, R \, \unb)\land (\una'\, S \, \unb') \]
	and restricted star, defined as \(R^\star := \bigvee_{n\ge 0} R^{\odot n}\); the subset \(R\subseteq A^*\times B^*\) is a rational relation if and only if for every \(u :  A^*\) the series \(R_u : B^* \to \Bool\) defined as \(v\mapsto [\text{is }(u,v)\in R?]\) is rational as an element of \(\malg[\Bool]{B}\) in the sense of \autoref{definition_rational_series_on_a}.
\end{definition}
\begin{definition}[1-transducer]\label{1_transduc}
	A (finite) \emph{transducer} consists of a (finite) set \(Q\) and a monoid homomorphism \(A^* \to M(Q,\rat(B))\) into the (semi)ring of \(|Q|\times |Q|\) matrices with coefficients in \(\rat(B)\).
\end{definition}
So far, God's in his heaven, all is right with the world. Now, let's take all this up a notch, and consider a complete, cocomplete, symmetric monoidal closed base of enrichment \(\clV\).\footnote{In specific examples, the base \(\clV\) will be the category of sets, or the category of Booleans --regarded as the full subcategory of the former spanned by the objects \(\{\varnothing,\top\}\). The constructions we outline are valid whenever \(\clV\) is a Bénabou cosmos, such that \emph{both} tabulators and cotabulators in the double category of \(\clV\)-profunctors.} Given a \(\clV\)-category \(\clB\), the \(\clV\)-category \(\presh{(\clB^*)}\) is the usual one defined \eg in \cite{kelly}, having objects the \(\clV\)-presheaves \((\clB^*)^\op \to\clV\).
\subsection{Two-dimensional transducers}\label{categorified_tdxs}
It is straightforward to give the following definition of a monoidal category \(\sfRat(\clB)\), as a categorification of \(\rat(B)\) in \autoref{definition_rational_presheaves}. We record the definition for completeness, but we will not make much use of it, since there exist bases of enrichment that are far more natural from the category-theoretic standpoint.
\begin{definition}[Rational presheaf]
	\label{definition_rational_presheaves}
	Let \(\clA,\clB\) be two \(\clV\)-enriched categories; define \(\sfRat(\clB)\) as the smallest full subcategory of \(\presh{\clB^*}\) such that
	\begin{enumtag}{rp}
		\item \label{rp_1} the initial object \(\varnothing\) of \(\presh{\clB^*}\) is in \(\sfRat(\clB^*)\);
		\item \label{rp_2} all representables are in \(\sfRat(\clB^*)\);
		\item \label{rp_3} if \(F,G :  \sfRat(\clB^*)\), then the coproduct \(F+G\) and the Day convolution \(F\odot G\) are in \(\sfRat(\clB^*)\);
		\item \label{rp_4} if \(F : \sfRat(\clB^*)\) is reduced (\ie if \(F(\emptyList) = \varnothing\)), then \(F^\star\), defined as \(\sum_{n\ge 0}F^{\odot n}\), is also in \(\sfRat(\clB^*)\).
	\end{enumtag}
\end{definition}
Note that, as a consequence of \ref{rp_3}, \(\sfRat(\clB^*)\) consists of the closure under restricted star of the finite coproduct completion of \(\clB^*\). Such finite coproduct completion has the universal property of the \emph{free} (cocartesian) \emph{2-rig} on \(\clB\), in the sense of \cite{Loregian2023}. So, there is a certain universal property that \(\sfRat(\clB^*)\) satisfies.

\medskip
From this, one could define a \emph{\twoDash transducer} as a strong monoidal functor
\[\vxy{
		t : \clA^* \ar[r] & \sfRat(\clB^*)\emdash\Prof(\clQ,\clQ).
	}\]
It is however difficult to work with enriched categories and profunctors over \(\sfRat(\clB^*)\), as this base of enrichment is not cocomplete (the free cocompletion of a category \(\clC\) under a class of colimits \(\Psi\) can in principle acquire more colimits, but just those in the \emph{closure} of \(\Psi\) under iterating the formal addition of \(\Psi\)-shaped colimits; see \cite{Albert1988}). Although frameworks for addressing this issue exist, treating \(\sfRat(\clB^*)\)-enriched profunctors as a \emph{virtual double category}, they would lead us astray from this first step in understanding categorified transducers. So, motivated by the fact that in \autoref{1_transduc} above we have considered a posetal base of enrichment, where the free coproduct- and the free colimit-completion coincide, we instead refer to the more general, and more well-behaved, category of profunctors enriched over the free completion \(\presh{(\clB^*)}\) of \(\clB\). The category obtained from \(\clB\) first taking the free monoidal \(\clV\)-category \(\clB^*\) on \(\clB\), and considering the closure under coproducts of the image of the Yoneda embedding enjoys the universal property of the \emph{free} (cocomplete) \emph{2-rig} on \(\clB\), in the sense of \cite{Loregian2023}. This satisfies a stronger (better, for our purposes) universal property than \(\sfRat(\clB^*)\).

There is a vast literature on categories enriched over \(\clV\)-presheaves, dating back to R. Wood's PhD thesis \cite{Wood1978,wood1976indicial} (written under the supervision of Paré) and recently further studied and adapted in \cite{lucyshynwright2025vgradedcategoriesvwbigradedcategories,McDermott2022}. Applications of such categories to automata theory are not new, but only appear in a somewhat cryptic remark in \cite[2.1.5°]{Guitart1980} citing Wood's thesis as a reference for what is there called a `bicatégorie graduée'.

In the following, for the sake of concise notation, we write \(\freerig\clB[\clV]\) as shorthand for the category \(\presh{(\clB^*)}\), and \(\freerig\clB\) when \(\clV=\Set\), an assumption we make now until further notice. This is meant to evoke the idea that \(\freerig\clB\) contains `polynomials' in variables determined by the category \(\clB\).

\medskip
This long series of preliminary observations finally paves the way to the following
\begin{definition}[\twoDash transducer]
	\label{definition_2_transducer}
	A \emph{\twoDash transducer} \(t : \clA\tdto\clB\) consists of a pair \((\clQ,t)\) where \(\clQ\) is a category and \(t\) is a strong monoidal functor
	\[\label{transduc_eqn}\vxy{
			t : \clA^* \ar[r] & \ratProf(\clQ,\clQ),
		}\]
	where \(\ratProf(\clQ,\clQ)\) has objects the functors \(\opid{\clQ} \to \freerig{\clB}\) (or, which is equivalent, \(\freerig{\clB}\)-enriched functors with domain the free \(\freerig{\clB}\)-enriched category over \(\opid{\clQ}\)), and \(\freerig{\clB}\)-enriched natural transformations, and monoidal structure given by composition of 1-cells.
\end{definition}
Giving evidence that this definition makes sense is a process composed of two parts:
\begin{itemize}
	\item the definition `unwinds well' and actually reduces to the old one when instantiated in the more canonical setting; ultimately, this follows from \autoref{free_quant_remark}.
	\item The definition sheds a better light on the decategorified topic.
\end{itemize}
Establishing this point is the purpose of this note. To do so, let's unpack the type of \(t\) in \eqref{transduc_eqn}: it uncurries to a functor \(\xymatrix{t : \clA^* \times \opid{\clQ} \ar[r] & \freerig{\clB}}\)
such that there are structural isomorphisms
\[\notag t(l\mathbin{\raisebox{2pt}{\(\concat\)}} l',\firstblank,\firstblank)\cong t(l,\firstblank,\firstblank)\diamond t(l',\firstblank,\firstblank)\quad \text{and} \quad t(\emptyList,\firstblank,\firstblank) \cong\hom_\clQ(\firstblank,\firstblank)\]
compatibly with the bicategory structure (associators and unitors). In turn, such a \(t\) further uncurries into a functor \(\xymatrix{t : \clA^* \times \opid{\clQ} \times (\clB^*)^\op \ar[r] & \Set}\)
qualifying each \(t_a\) as a `\(\clQ\)-by-\(\clQ\) matrix of \(\clB\)-valued power series'.

Given that the domain category of a \twoDash transducer \(t\) is free monoidal, \(t\) is uniquely determined by its restriction to \(\clA\), \ie on `generators' \((a\kons \emptyList)\) of \(\clA^*\), so that in fact \(t\) is completely determined by a functor \(t_0\) of type \(\clA \times \opid{\clQ} \to \freerig{\clB}\). In the following we will blur the notational distinction \(t\) and \(t_0\) as this is rarely a source of confusion, but it is worth to stress again the obvious fact that a transducer is \emph{either} a mere functor with domain \(\clA\), \emph{or} a strong monoidal one with domain \(\clA^*\).
\begin{remark}[Structure constants of a \twoDash transducer]
	\label{remark_structure_constants_of_a_2_transducer}
	We term the profunctors \(t_a = t_0(a\kons \emptyList)  :  \freerig{\clB}\emdash\Prof(\clQ,\clQ)\) the \emph{structure constants} of the \twoDash transducer, in analogy with representation theory \cite{humphreys1972introduction,serre1992lie}: given a finite-dimensional Lie algebra \(\fkg\) (with a basis \(\{e_1,\dots,e_n\}\) for its underlying vector spaces), the scalars \({\gamma_{ab}}^c\) appear as coordinates of the Lie bracket of two basis element \([e_a,e_b] = \sum_c {\gamma_{ab}}^c \, e_c\).
\end{remark}
\begin{remark}\label{strong_monoidality_2transducer}
	It is worth to spell out in detail what strong monoidality means for a \twoDash transducer; for any list of objects \(\una=(a_1,\dots,a_n) : \clA^*\), the functor \(t\) is determined on the generators in the sense that
	\[t(\una,\firstblank) : (q,q',\unb)\mapsto \int^{x_1,\dots,x_{n-1}}t_{a_1}(q,x_1)(\unb)\times t_{a_2}(x_1,x_2)(\unb)\times\dots\times t_{a_n}(x_{n-1},q')(\unb)\]
\end{remark}
\begin{remark}[Two-step extension of a \twoDash transducer]\label{twostep_extension}
	The \emph{Yoneda extension} of a functor \(F : \clX \to \clY\) with small domain and cocomplete codomain is the left Kan extension of \(F\) along the Yoneda embedding of \(\clX\); when \(\clX=\clA\times\opid\clQ\) and the codomain is of the form \(\clY=\freerig\clB\), one can perform a `two-step extension' of a transducer \(t : \clA\tdto\clB\), first extending \(t\) to a functor \(\clA^*\times\opid\clQ\to\freerig\clB\) monoidal in the \(\clA\) component, and then taking the Yoneda extension `in the variable \(\clA\)' (\ie leaving the \(\opid\clQ\) components alone), to the effect that each \twoDash transducer induces a functor
	\[\vxy{T : \freerig\clA\times\opid\clQ  \ar[r] &\freerig\clB}\]
	obtained as \(\Lan_{\yon_{\clA^*}} t\), which is monoidally cocontinuous in the \(\freerig\clA\) component. One could term this two-step operation sending \(t\) to \(T=\text{LY}(T)\) the \emph{Laurent-Yoneda extension} of \(t\).

	The Laurent-Yoneda extension \(t\mapsto \text{LT}(t)\) should be thought of as the Kleisli extension for a certain monad (each extension of \(t\) is the Kleisli extension with respect to a pseudomonad); perhaps this observation can be useful from a formal viewpoint, but we will not expand on it in the present work.
\end{remark}
\begin{defconstrunction}[The bicategory \(\TwoTDX\)]
	\label{definition_the_bicategory_bitwotran}
	Given all this, for fixed \(\clA,\clB\), there exists a category \(\TwoTDX(\clA,\clB)\) having
	\begin{enumtag}{t}
		\item\label{2t_1} objects the pairs \((\clQ,t)\) as in \eqref{transduc_eqn}; we term the category \(\clQ\) the `state category' of the \twoDash transducer;
		\item\label{2t_2} morphisms \((\clP,t) \to (\clQ,t')\) the `transducer transformations' \ie the pairs \((F,\theta)\) where \(F : \clP\to\clQ\) is a functor, and \(\theta : t\To t'(F,F)\), is a natural transformation, where \(t'(F,F)\) is defined as the composition
		\[\vxy[@C=1.5cm]{
			\clA\times\opid{\clP} \ar[r]^-{\clA\times \opid F}& \clA\times\opid{\clQ} \ar[r]^-{t'}& \freerig{\clB}.
			}\]
	\end{enumtag}
	Letting \(\clA,\clB\) vary, \(\TwoTDX(\firstblank,\firstblank)\) is a bifunctor \(\Cat^\op\times\Cat\to\Cat\);\footnote{Into large categories, but we will not pay much attention to this detail, and in general to size issues.} any two functors
	\[\vxy{\clC \ar[r]^-F & \clA & \clB \ar[r]^-G & \clD}\]
	will induce a functor \(\TwoTDX(F,G) : \TwoTDX(\clA,\clB) \to \TwoTDX(\clC,\clD)\) defined sending \((\clQ,t) : \clA\times\opid\clQ\to\freerig\clB\) to
	\[\vxy{
		\clC\times\opid\clQ \ar[r] & \clA\times\opid\clQ \ar[r]^-t & \freerig\clB \ar[r]^-{\Lan_{G^*}} &\freerig\clD
		}\]
	and where \(\Lan_{G^*}\) acts on a presheaf \(P : (\clB^*)^\op\to\Set\) sending it to the presheaf
	\[\und\mapsto \int^{\unb:\clB^*} P\unb\times\clD^*(\und,G^*\unb)\]
	where \(G^* : \clB^*\to\clD^*\) is the obvious extension of \(G\) to a functor between monoidal categories. All in all, for every \((\clQ,t) :  \TwoTDX(\clA,\clB)\), \(\TwoTDX(F,G)(\clQ,t)\) is defined as
	\[
		(\unc,q,q')\mapsto \int^{\unb:\clB^*} t(F\unc,q,q')(\unb)\times\clD^*(\und,G^*\unb).
	\]
	Moreover,
	\begin{itemize}
		\item given categories \(\clA,\clB,\clC\) we can define bifunctors
		      \[
			      \vxy{\firstblank\tranComp\firstblank : \TwoTDX(\clB,\clC) \times \TwoTDX(\clA,\clB) \ar[r] & \TwoTDX(\clA,\clC)}
		      \]
		      obeying the axioms of a (horizontal) composition law; this is defined, given \twoDash transducers \((s,\clQ) : \clA \tdto \clB\) and \((t,\clP) : \clB \tdto \clC\), as the \twoDash transducer \((\clP\times\clQ,T\circ (s\times\opid{\clP}))\) obtained from the composition
		      \[\label{compozia}\vxy[@C=1.5cm]{
			      \clA^* \times \opid{\clQ}\times\opid{\clP} \ar[r]^-{s\times\opid{\clP}} &
			      \freerig\clB \times \opid{\clP} \ar[r]^-T &
			      \freerig{\clC}\\
			      }\]
		      where \(T := \Yan[\clB] t\) is the two-step extension of \(t\) in the \(\clB\) component, as in \autoref{twostep_extension}. We can then write the composition \(t\circ s : \clA\tdto\clC\) in coend language as
		      \begin{align*}
			      \lambda\unc.\Big((t\tranComp s) (\una;(p,q),(p',q'))[\unc]\Big)
			       & =	\lambda\unc.\Big(T(s(\una,q,q')[\firstblank], p,p')[\unc]\Big) \\
			       & =	\lambda\unc.\Big(\tcomp sta[q][p][b][\clB][\unc]\Big).
		      \end{align*}
		\item In each \(\TwoTDX(\clA,\clA)\) there is a distinguished element, the \emph{identity} \twoDash transducer \(\iota : \clA\tdto\clA\) with state category \(\clI\) the terminal category (discrete on a singleton set), and structure constants \(\iota_a\) the representable functor at the singleton list \((a\kons \emptyList)\). More precisely,
		      \[\vxy{\iota : \clA^*\times\clI^\op\times\clI \ar@{=}[r] & \clA^* \ar[r] & \freerig{\clA}}\]
		      is the Yoneda embedding up to the isomorphism \(\clA^*\times\clI^\op\times\clI \cong \clA^*\).
	\end{itemize}
\end{defconstrunction}
This gives rise to a bicategory \(\TwoTDX\) of (small) categories, \twoDash transducers \(t : \clA\tdto\clB\) as 1-cells, and transducer transformations as above.

We sketch how to obtain the bicategory structural isomorphisms, using coend calculus.
The associator is built starting from composable 1-cells \(\clA\tdto[(\clP,w)]\clB\tdto[(\clQ,v)]\clC\tdto[(\clR,u)]\clD\), and obtained from the chain of isomorphisms
\begin{center}
	\bareadju{\begin{align*}
			((u\tranComp v)\tranComp w)(\una; (p,q,r), (p',q',r')) & \cong \tcomp{w}{(u\tranComp v)}{a}[p][(q,r)]                                                                                          \\
			                                                       & \cong\int^{\unb : \clB^*} w(\una, p,p')[\unb]\times \left(\tcomp vub[q][r][c][\clC]\right)                                            \\
			                                                       & \cong\int^{\unb : \clB^*}\kern-.75em\int^{\unc : \clC^*} w(\una, p,p')[\unb]\times \Big(v(\unb;q,q')[\unc] \times u(\unc, r,r')\Big)  \\
			(u\tranComp (v\tranComp w))(\una; (p,q,r), (p',q',r')) & \cong \tcomp{(v\tranComp w)}{u}{a}[(p,q)][r][c][\clC]                                                                                 \\
			                                                       & \cong\int^{\unc : \clC^*} \left(\tcomp wva[p][q]\right) \times u(\unc, r,r')                                                          \\
			                                                       & \cong\int^{\unc : \clC^*} \kern-.75em\int^{\unb : \clB^*} \Big(w(\una, p,p')[\unb]\times v(\unb;q,q')[\unc]\Big) \times u(\unc, r,r')
		\end{align*}}
\end{center}
The left and right unitors for a \twoDash transducer \(\clA\tdto[(\clQ,t)]\clB\) are induced by the isomorphisms
\begin{align*}
	(\iota_\clB\tranComp t)(\una;(*,q),(*,q)') & \cong \int^{\unb : \clB^*} t(\una;q,q')[\unb]\times \iota(\unb;*,*)      \\
	                                           & \cong t(\una;q,q').                                                      \\
	(t\tranComp \iota_\clA)(\una;(q,*),(q,*)') & \cong \int^{\una' : \clA^*} \iota_\clA(\una',*,*)[\una]\times t(a';q,q') \\
	                                           & \cong \int^{\una' : \clA^*} \clA^*(\una',\una)\times t(a';q,q')          \\
	                                           & \cong t(\una;q,q').
\end{align*}
Rote computation yields all the needed coherences (pentagon, unit axioms) in order to define the bicategory structure on \(\TwoTDX\).
\begin{remark}
	\label{2cell_theta_triangle}
	Clearly, a 2-cell \(\theta : (\clP,s)\To (\clQ,t)\) between \((\clP,s),(\clQ,t) : \clA\tdto\clB\) in \autoref{definition_the_bicategory_bitwotran}.\ref{2t_2} does nothing but fill the obvious triangle
	\[\vxy[@R=5mm]{
		\clA\times \opid{\clP} \drtwocell<\omit>{<3>\theta}
		\ar[dd]_-{\clA\times F^\op\times F}\ar[dr]^-t & \\
		& \freerig{\clB} \\
		\clA\times \opid{\clQ} \ar[ur]_-{t'}&
		}\]
	thus providing a family of cells, compatible with the category structure of \(\clA\), of shape
	\[\label{dbl_cell}\vxy{
		\clP\ar@{}[dr]|-{\theta(a,-)}\ar[r]|-@{|}^{t(a,-)}\ar[d]_-F & \clP\ar[d]^-F \\
		\clQ\ar[r]|-@{|}_{t'(a,-)} & \clQ
		}\]
	in the double category of categories (in the tight direction) and profunctors (in the loose direction). `Compatibility' means the obvious thing, in double categorical language: given a morphism \(u : a \to a'\) in \(\clA\), we have equalities of cells
	\[\vxy{
		\clP\ar@{=}[d]\ar[r]|-@{|}^{t_a}\ar@{}[dr]|-{t_u} & \clP\ar@{=}[d] & \clP\ar[d]_-F \ar[r]|-@{|}^{t_a} \ar@{}[dr]|-{\theta(a,-)} & \clP\ar[d]^-F \\
		\clP\ar[r]|-@{|}_{t_{a'}}\ar@{}[dr]|-{\theta(a',-)}\ar[d]_-F & \clP\ar[d]^-F\ar@{}[r]|= & \clQ\ar@{=}[d] \ar[r]|-@{|}_{t'_a} \ar@{}[dr]|-{t'_u}& \clQ\ar@{=}[d] \\
		\clQ \ar[r]|-@{|}_{t'_{a'}} & \clQ & \clQ \ar[r]|-@{|}_{t'_{a'}} & \clQ
		}\]
\end{remark}
\begin{remark}\label{whiskering_2cells}
	Note, in particular, that the whiskering operation is induced in the following ways (left and right relying on slightly different constructions), given a diagram of shape
	\[\vxy{
		\clX \ar[r]|@-{*}^-{(\clH,h)} & \clA \rrtwocell<\omit>{\kern1.5em(F,\theta)}\ar@/^1pc/[rr]|@-{*}^{(\clP,s)}\ar@/_1pc/[rr]|@-{*}_{(\clQ,t)} && \clB \ar[r]|@-{*}^-{(\clK,k)} & \clY.
		}\]
	\begin{itemize}
		\item Every 2-cell \((F,\theta) : (\clP,s)\To (\clQ,t)\) induces, by functoriality and by the universal properties of the relevant coends, a natural transformation
		      \[\vxy[@R=5mm]{
			      \freerig\clA\times \opid{\clP} \drtwocell<\omit>{<3>\Theta}
			      \ar[dd]_-{\freerig\clA\times F^\op\times F}\ar[dr]^-{\text{LY}(s)} & \\
			      & \freerig{\clB}  \\
			      \freerig\clA\times \opid{\clQ} \ar[ur]_-{\text{LY}(t)}&
			      }\]
		      with components induced by \(U(\una)\times\theta\) via
		      \begin{align*}
			      U(\una)\times s(\una,p,p')[\unb] & \to U(\una)\times t(\una,Fp,Fp')[\unb]                         \\
			                                       & \to \int^{\una' : \clA^*} U(\una')\times t(\una',Fp,Fp')[\unb] \\
			                                       & \cong \Yan t(U,Fp,Fp')[\unb],
		      \end{align*}
		      whence a unique way to define the whiskering \((F,\theta) * (\clX,h)\) as the pasting of 2-cells
		      \[\vxy[@R=5mm]{
			      \clX\times\opid{\clH} \times\opid{\clP} \ar[dd]_{\clX\times\opid{\clH}\times F^\op\times F}\ar[rr]^-{h\times \opid{\clP}}&&\freerig\clA\times \opid{\clP} \drtwocell<\omit>{<3>\Theta}
			      \ar[dd]_-{\freerig\clA\times F^\op\times F}\ar[dr]^-{\Yan s} & \\
			      &&& \freerig{\clB}  \\
			      \clX\times\opid{\clH} \times \opid{\clQ}\ar[rr]_-{h\times \opid{\clP}}&&\freerig\clA\times \opid{\clQ} \ar[ur]_-{\Yan t}&
			      }\]
		\item The whiskering \((\clK,k)*(F,\theta)\) is instead simply defined as the whiskering
		      \[\vxy[@R=5mm]{
			      \clA\times \opid{\clP} \drtwocell<\omit>{<3>\theta}
			      \ar[dd]_-{\clA\times F^\op\times F}\ar[dr]^-s & \\
			      & \freerig{\clB} \ar[r]^-K&\VCat\big(\opid{\clK},\freerig\clY\big) \\
			      \clA\times \opid{\clQ} \ar[ur]_-{t}&
			      }\]
		      where \(K\) is the transpose of \(\text{LY}(k) : \freerig\clB\times\opid{\clK} \to \freerig\clY\).
	\end{itemize}
	It is straightforward to verify that left and right whiskering are compatible, \ie
	\[(\clK,k)*\big((F,\theta) * (\clX,h)\big) = \big((\clK,k)*(F,\theta)\big) * (\clX,h).\]
\end{remark}
\begin{remark}\label{natural_embed}
	There is a natural way to regard a profunctor \(p : \clA \pto\clB\) as a 2-transducer \((\clI,\bar p) : \clA\tdto\clB\), where \(\clI\) is the terminal category; \(\bar p\) is the unique profunctor making the triangle
	\[\vxy[@R=5mm@C=5mm]{
			\clA\times\opid\clI\times(\clB^*)^\op \ar[dr]^{\bar p}& \\
			& \Set \\
			\clA\times\clB^\op\ar[ur]_p \ar[uu]& \\
		}\]
	commutative, and constant at the empty set \(\varnothing\) for every other summand \(\clB^n\) of \(\clB^*\), for \(n\ge 2\).
	This straightforward construction provides a natural embedding \(\Prof\subseteq\TwoTDX\) (or an embedding of double categories \(\DProf\subseteq\DTDX\)). It is immediate to check that the embedding is compatible with the `Laurent-Yoneda' extension of \autoref{twostep_extension}.
\end{remark}
\begin{remark}[Many versions of a category]\label{remark_many_versions_of_a_category}
	We can describe the category \(\TwoTDX(\clA,\clB)\) in many different ways; each of these will prove useful in different contexts, which we mention while we outline the equivalence. All the following categories are equivalent.
	\begin{enumtag}{tc}
		\item \label{tdx_char_1} The category of functors of type \(\clA \times \opid{\clQ}\to \freerig{\clB}\), and the category of functors \(\clA^*\times \opid{\clQ}\to \freerig{\clB}\), strong monoidal in the variable \(\clA\), given the universal property of \(\clA^*\) (by currying, \(\clA \times \opid{\clQ}\to \freerig{\clB}\) corresponds to \(\clA \to \freerig{\clB}\emdash\Prof(\clQ,\clQ)\), and the codomain of the latter is monoidal under composition).
		\item \label{tdx_char_2} The category of functors \(\freerig\clA \times \opid{\clQ}\to \freerig{\clB}\), taking the two-step extension in the \(\clA\) component, and (by currying) the category of functors \(t : \opid{\clQ}\to \Cat(\freerig\clA,\freerig\clB)\), so that each \(t_{qq'} :\freerig\clA\to\freerig\clB\) is a left adjoint, by a well-known property of profunctors;
		\item \label{tdx_char_3} currying in another direction, \(\TwoTDX(\clA,\clB)\) is the category of profunctors from \(\clA\) to \(\clB\), enriched over the (monoidal) category \(\Set^{\opid\clQ}\) of endoprofunctors on \(\clQ\).
		\item Currying in yet another direction, \(\TwoTDX(\clA,\clB)\) can be seen as the category \(\Cat(\opid{\clQ},\Set^{\clA^*\times (\clB^*)^\op})=\Set^{\clA^*\times (\clB^*)^\op}\emdash\Prof(\clQ,\clQ)\), where the codomain \(\Set^{\clA^*\times (\clB^*)^\op}\) is monoidal when equipped with Day convolution (\(\clA^*\times (\clB^*)^\op\) is monoidal in the `obvious' way, \((\una,\unb)\otimes (\una',\unb') := (\una\concat\una',\unb\mathbin{\concat^\op}\unb')\)).
		\item \label{tdx_char_33} \(\TwoTDX(\clA,\clB)\) is also equivalent to the category \(\Psd(\Sigma\clA^*,\freerig{\clB}\emdash\Prof)\) of pseudofunctors
		\[\vxy{\bst : \Sigma\clA^* \ar[r] & \freerig{\clB}\emdash\Prof}\]
		where \(\Sigma\clA^*\) is the one-object bicategory associated to \(\clA^*\); the unique object is mapped by \(\bst\) to \(\clQ\), and the strong monoidal functor \(t\) of \autoref{definition_2_transducer} corresponds, evidently, to the action of \(\bst\) on 1- and 2-cells.
		\item \label{tdx_char_4} \(\TwoTDX(\clA,\clB)\) is also equivalent to the category of profunctors \(\clA^*\times\clQ\pto\clQ\times\clB^*\); note that a profunctor \(\clA^*\times\clQ\pto\clQ\times\clB^*\) corresponds to a profunctor \(\clA^*\times\opid{\clQ}\pto\clB^*\); such a map looks like a coKleisli arrow \(K_\clQ(\clA^*)\pto\clB^*\), if one considers a putative parametric comonad
		\[\vxy[@R=0cm]{
				K : \Cat\times\Cat \ar[r] & \Cat\\
				(\clQ,\clA) \ar@{|->}[r] & \clA\times\opid{\clQ}.
			}\]
		Hence, the last equivalence is between \(\TwoTDX(\clA,\clB)\) as defined in any other of the previous ways, and
		\item \label{tdx_char_6} the `graded coKleisli category' in the sense of \cite{Gaboardi2021} of the graded comonad \(K : \clA\mapsto \clA\times\opid{\clQ}\), lifted from categories to profunctors.
	\end{enumtag}
	Some of these characterizations are evident: \ref{tdx_char_1} is the definition; \ref{tdx_char_2} is what allowed us to define the LY extension in \autoref{twostep_extension}. \ref{tdx_char_33} and others are expanded a bit in \autoref{tdx_properties_and_corollaries} below.

	The two latter characterizations require more explanation. We refer to \cite{Gaboardi2021} for additional details, but we will expand on the construction of the graded coKleisli category of a graded comonad in \autoref{locally_graded_coKleisli}, when we will have to characterize adjunctions in \(\TwoTDX\).

	First, there exists a graded (2-)comonad obtained as composite of the comonad \(\Delta : \clQ\mapsto\opid{\clQ}\) and the coreader comonad, in the sense that we have a composite oplax functor
	\[\vxy{
			K : \Sigma(\Cat,\times) \ar[r]^-\Delta &\Sigma(\Cat,\times) \ar[r]^-W & \Cat
		}\]
	defining a \((\Cat,\times)\)-graded comonad \(K_\clQ : \clC\mapsto\clC\times\opid{\clQ}\). (The functor \(\Delta\)	is the one that sends a category \(\clQ\) to the category \(\opid{\clQ}\); it is strong monoidal, in fact a comonad on \(\Cat\).)
	The functor \(K_\clQ\) now lifts to \(\Prof\)
	\[\vxy{
		\Prof \ar@{.>}[r]& \Prof\\
		\Cat \ar[u]\ar[r]_-{K_\clQ} & \Cat\ar[u]
		}\]
	via a distributive law with the presheaf construction,
	having components (denote \(\bsP\clC:=\psh\clC\) for brevity)
	\[\vxy{
		\bsP{\clC}\times\opid\clQ \ar[r]^-{\bsP\clC\times\yon}& \bsP{\clC}\times\bsP{(\opid\clQ)} \ar[r]^-{\tau_{\clC,\opid\clQ}}& \bsP(\clC\times\opid\clQ)
		}\]
	defined in terms of the Yoneda embedding and the tensorial strength of \(\bsP\).

	Clearly, a \twoDash transducer is a coKleisli map \(K_\clQ\clA\pto\clB\), but only if we interpret the coKleisli construction exactly in the sense of \cite{Gaboardi2021}.
\end{remark}
\begin{corollary}\label{tdx_properties_and_corollaries}
	Some of the equivalent characterizations above make it easier to establish certain properties of \(\TwoTDX(\clA,\clB)\), they have interesting corollaries or suggestive interpretations: we collect a few here and in \autoref{linear_proalgebra}, \autoref{nota_2_fun} below. Some easy consequences are that \(\TwoTDX(\clA,\clB)\) admits all small colimits, by virtue of \ref{tdx_char_3}. Furthermore, from \ref{tdx_char_33} it follows that, at least informally and over bases of enrichment where a Grothendieck construction for presheaves is available, \(\TwoTDX(\clA,\clB)\) can be thought of as the category of \emph{Conduché functors} over categories of the form \(\Sigma\clA^*\), where fibres are \(\freerig\clB\)-enriched.\footnote{Admittedly, this is a bit imprecise, but the appearance of a Conduché property doesn't seem random here. A connection between automata theory and the Conduché property has been observed, in passing and using a different terminology, in \cite[§2.3]{Mellis2025}, and before that it has been employed in \cite{Kasangian2010} to study concurrency and bisimulation, linking process semantics to functorial factorisation properties; the latter reference is in particular focused with studying a Conduché property for functors between categories enriched in a locally posetal 2-category obtained from a free monoid \(A^*\) with respect to the prefix order.}

	Our choice to focus on \(\Set\) as base of enrichment is motivated by the fact that here we acquire this additional point of view.
\end{corollary}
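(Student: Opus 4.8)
The statement bundles two assertions of rather different character, and the plan is to prove them separately, keeping the cocompleteness claim fully rigorous and treating the Conduché claim at the level of precision the footnote allows.

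\emph{Cocompleteness.} First I would make explicit the fibrational structure already implicit in the description of $2$-cells in \autoref{definition_the_bicategory_bitwotran}.\ref{2t_2}: the assignments $(\clQ,t)\mapsto\clQ$ and $(F,\theta)\mapsto F$ define a functor $\pi:\TwoTDX(\clA,\clB)\to\Cat$ whose fibre over $\clQ$ is, reading \ref{tdx_char_3}, the functor category $[\clA\times\opid{\clQ},\freerig\clB]\cong[\clA\times\opid{\clQ}\times(\clB^*)^\op,\Set]$, \ie a presheaf category, hence cocomplete. The cartesian lift of $F:\clP\to\clQ$ at $(\clQ,t)$ is $(\clP,t(F,F))$, so reindexing along $\pi$ is precomposition with $\clA\times F^\op\times F$; since $\freerig\clB$ is cocomplete, this precomposition functor admits a left adjoint, namely the left Kan extension $\Lan_{\clA\times F^\op\times F}$ (the fibrewise shadow of the Laurent--Yoneda extension of \autoref{twostep_extension}). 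Thus $\pi$ is a bifibration over a cocomplete base with cocomplete fibres, and I would conclude by the standard criterion that its total category is cocomplete---concretely, the colimit of $j\mapsto(\clQ_j,t_j)$ is built by forming $\operatorname{colim}_j\clQ_j$ in $\Cat$, transporting each $t_j$ to that fibre along the coprojections by $\Lan$, and taking the pointwise colimit in $\freerig\clB$.

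\emph{Conduché interpretation.} Here I would start from \ref{tdx_char_33}, which presents a \twoDash transducer as a pseudofunctor $\bst:\Sigma\clA^*\to\ratProf$ carrying the unique object to $\clQ$ and each generator $a$ to the endoprofunctor $t_a:\clQ\pto\clQ$. The template is the Bénabou--Conduché correspondence between normal lax functors from a category into the bicategory $\Prof$ and Conduché (exponentiable) functors over that category, where the total space is rebuilt from the profunctor data through the unique-factorisation-lifting property. The plan is to run the enriched analogue with $\Prof$ replaced by $\ratProf$: a normal (pseudo)functor $\Sigma\clA^*\to\ratProf$ should assemble, via a category-of-elements construction for $\freerig\clB$-valued profunctors, into a $\freerig\clB$-enriched Conduché functor $\clE\to\Sigma\clA^*$ whose (single) fibre is $\clQ$ and whose enriched homs between fibres are read off from the $t_a$---which is precisely the asserted picture.

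\emph{The main obstacle.} The hard part---and the reason the second claim is only stated informally---is twofold. First, $\Sigma\clA^*$ is in general a one-object \emph{bicategory} rather than a $1$-category, its $2$-cells being the morphisms of $\clA^*$, so what one really needs is a normalised \emph{two-dimensional} Bénabou--Conduché correspondence; this degenerates to the classical statement exactly when $\clA$ is discrete, \ie inside the sub-$2$-category $\TDX$. Second, and more seriously, reconstructing the total space $\clE$ requires a Grothendieck construction for $\freerig\clB$-enriched profunctors, which is only available over sufficiently well-behaved bases---concretely a Bénabou cosmos admitting both tabulators and cotabulators, as flagged after \autoref{1_transduc}. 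Over $\clV=\Set$ these ingredients are all in place, so I would either restrict the precise statement to this base or state it modulo the existence of the enriched category-of-elements construction; the cocompleteness half, by contrast, needs no such hypothesis and is an honest corollary of \ref{tdx_char_3}.
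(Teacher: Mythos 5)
Your proposal is correct and follows essentially the paper's own route: your cocompleteness argument is precisely the fibration-over-\(\Cat\) picture the paper records in \autoref{nota_2_fun} (presheaf-category fibres \([\clA\times\opid\clQ\times(\clB^*)^\op,\Set]\), reindexing by precomposition, left adjoints given by left Kan extension, then the standard bifibration criterion), which is the substance behind the corollary's terse appeal to \ref{tdx_char_3}. Likewise your Conduché discussion reproduces the paper's informal reading of \ref{tdx_char_33} through the B\'enabou correspondence, flagging the same two caveats the paper itself acknowledges (the two-dimensionality of \(\Sigma\clA^*\) outside the discrete case, and the need for a Grothendieck construction for \(\freerig\clB\)-enriched profunctors), so there are no gaps to report.
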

It has just been observed that each hom-category \(\TwoTDX(\clA,\clB)\) admits all small colimits; furthermore, both pre- and post-composition are easily seen to be colimit-preserving; one can then suspect that each functor \(\firstblank\circ s\) and \(t\circ\firstblank\) has a right adjoint (respectively, the right extension along \(s\), and the right lifting along \(t\)). We now construct right extensions and lifts explicitly, mimicking what happens in the bicategory of profunctors.
\begin{remark}[Construction of right extensions and lifts in \(\TwoTDX\)]
	Consider two composable \twoDash transducers \((s,\clQ) : \clA \tdto \clB\) and \((t,\clP) : \clB \tdto \clC\) as in \eqref{compozia} above; there is an evident bijective correspondence between 2-cells \((F,\theta)\) of type \(t\tranComp s\To r\) and 2-cells \((\hat F,\hat\theta)\) of type \(s\To \langle t/r\rangle\),
	\[\footnotesize\vxy[@R=5mm]{
		\clA^*\times\opid{(\clQ\times\clP)} \ar@{->}[rd]^{t\circ s}\drtwocell<\omit>{<3>\theta} \ar@{->}[dd]_{\clA^*\times\opid F} &  & \clA^*\times\opid\clQ \ar@{->}[rd]^s \ar@{->}[dd]_{\clA^*\times\opid{\hat F}}\drtwocell<\omit>{<3>\hat\theta} &  \\
		& \freerig\clC \ar@{}[r]|(.4)\cong& & \freerig\clB \\
		\clA^*\times\opid\clN \ar@{->}[ru]_r &  & \clA^*\times\opid{(\clN^\clP)} \ar@{->}[ru]_-{\langle t/r\rangle} &
		}\]
	constructed as follows. Given a 2-cell \((F,\theta)\) on the left, the functor \(\hat F\) is evidently the mate \(\clP\to\clN^\clQ\) of \(F\), and \(\hat\theta\) is determined under the chain of bijections

	\bareadju{
		\begin{align*}
			\freerig\clC((t\circ s)(\una,pqp'q'),r(\una,Fpq,Fp'q')) & \cong \int_{\unc:\clC^*} \Set\Big(\tcomp sta[q][p][b][\clB][\unc],r(\una,Fpq,Fp'q')[\unc]\Big)                                        \\
			                                                        & \cong \int_{\unc:\clC^*}\int_{\unb:\clB^*}	\Set\big(s(\una,q,q')[\unb]\times t(\unb;p,p')[\unc],r(\una,Fpq,Fp'q')[\unc]\big)          \\
			                                                        & \cong \int_{\unc:\clC^*}\int_{\unb:\clB^*}	\Set\big(s(\una,q,q')[\unb] , \big\{ t(\unb;p,p')[\unc],r(\una,Fpq,Fp'q')[\unc]\big\}\big) \\
			                                                        & \cong \int_{\unb:\clB^*}	\Set\Big(s(\una,q,q')[\unb] , \int_{\unc:\clC^*}\big\{ t(\unb;p,p')[\unc],r(\una,Fpq,Fp'q')[\unc]\big\}\Big)
		\end{align*}}
	which is exactly a natural transformation
	\[\vxy{s(\una,q,q')\ar[r] & \langle t/r\rangle(\una,\hat Fq,\hat Fq')}\]
	if \(\hat Fq=F(-,q)\) and we define \(\langle t/r\rangle : \clA^*\times\opid{(\clN^\clP)} \to \freerig{\clB}\) as sending \((\una,W,W') :  \clA^*\times\opid{(\clN^\clP)}\) to the presheaf on \(\clB^*\)	given by the end
	\[\unb\mapsto\int_{\unc:\clC^*}\big\{ t(\unb;p,p')[\unc],r(\una,Wp,Wp')[\unc]\big\}.\]
	Reasoning similarly, one determines a bijective correspondence between cells of the two types
	\[\vxy[@R=2mm]{
		\star \ar@{->}@/^.5pc/[rd]^{t\circ s}\drtwocell<\omit>{<2>\theta} \ar@{->}[dd] &  & \star \ar@{->}@/^.5pc/[rd]^t \ar@{->}[dd]\drtwocell<\omit>{<2>\tilde\theta} &  \\
		& \freerig\clC & &  \freerig\clC\\
		\star \ar@{->}@/_.5pc/[ru]_r &  & \star \ar@{->}@/_.5pc/[ru]_-{\langle r/s\rangle} &
		}\]
	where the right extension \(\langle r/s\rangle\) of \(r\) along \(s\) is defined as the end
	\[\int_{\unb:\clB^*}\big\{s(\una,q,q')[\unb],r(\una,Uq,Uq')[\unc]\big\}.\]
\end{remark}
We end this round of consequences of the many equivalent characterizations in \autoref{remark_many_versions_of_a_category} with a somewhat technical observation.
\begin{remark}\label{nota_2_fun}
	From \ref{tdx_char_2} above it follows that every hom-category of \(\TwoTDX\) is the total category of a fibration over \(\Cat\), because the assignment \(\bbP_{\clA,\clB}=\Prof_{\Set^{\clA^*\times (\clB^*)^\op}}\) sending \(\clQ\) to the category of endoprofunctors on \(\clQ\), enriched over \(\Set^{\clA^*\times (\clB^*)^\op}\) is functorial (and contravariant).

	So, we can think of \(\TwoTDX(\clA,\clB)\) as the Grothendieck-Bénabou construction of \(\bbP_{\clA,\clB}\).

	As a consequence, \(\TwoTDX(\clA,\clB)\) is small complete and cocomplete, and the assignment
	\[\vxy{(\clA,\clB) \ar@{|->}[r]& \int_\clQ \Big(\Prof_{\Set^{\clA^*\times (\clB^*)^\op}}\Big) \cong \TwoTDX(\clA,\clB)}\]
	\emph{qua} (pseudo)promonad, factors through the domain functor \(\text{dom}\) yielding the total category of a fibration, \ie
	\[\label{liftaggio_at_fibs}\vxy{
		&& \FIB/\Cat\ar[d]^{\text{dom}} \\
		\Cat^\op\times\Cat\ar[rr]_-{\TwoTDX(-,-)} \ar[urr]&& \CAT
		}\]
	This means that every hom-category \(\TwoTDX(\clA,\clB)\) is a pseudofunctor \(\Cat^\op\to\CAT\), \ie that \(\TwoTDX\) is a \emph{\(\Psd(\Cat^\op,\CAT)\)-enriched 2-category}, or (for lack of a better name) a `graded 2-category'. Treating it as a 3-category --after all, \(\FIB/\Cat\) has 2-cells!-- is however not completely straightforward: \(\TwoTDX(\clA,\clB)\) is not a 2-category (and \(\TwoTDX\) not a 3-category), because the functor \(\clQ\mapsto\opid\clQ\) is not a 2-functor (what would be its covariance type on 2-cells?).
\end{remark}
Instead, let's focus on a series of parallels with linear algebra, inspired by the discussion so far.
\begin{example}[`Linear algebra' in \(\TwoTDX\)]\label{linear_proalgebra}
	Let \(k\) be a field. A \(k\)-vector space \(V\) equipped with an endomorphism \(f : V\to V\) corresponds to a \(k[X]\)-module structure on \(V\) (by the universal property of \(k[X]\) as the free \(k\)-algebra on one generator); here we would like to argue that such a notion is categorified by \twoDash transducers \(\clI\tdto\varnothing\) so that, if by `linear map between vector spaces' we choose to mean `colimit-preserving functor between cocomplete categories', \(\clL=\TwoTDX(\clI,\varnothing)\) can be thought of as a category of categories equipped with an endoprofunctor.

	Indeed \autoref{definition_2_transducer} yields that a \twoDash transducer \(\clI\tdto\varnothing\) consists of a pair \((\clQ,t)\) where \(\clQ\) is a category and \(t\) a functor of type
	\[\vxy{\clI \times \opid\clQ\ar[r] & \freerig\varnothing}\]
	which (using the fact that \(\clI^*=\bbN,\freerig\varnothing=\Set\)) corresponds to a single profunctor \(t=t_* : \opid\clQ\to\Set\), together with all its iterates \(t\circ\dots\circ t\). Now \ref{tdx_char_2} yields that a \twoDash transducer \(\clI\tdto\varnothing\) also extends uniquely to a unique functor
	\[\vxy{
			\Set/\bbN\times\opid\clQ\ar[r] & \Set
		}\]
	which is a \((q,q')\)-wise a left adjoint (the presheaf category of \(\bbN\) regarded as discrete category is the slice \(\Set/\bbN\)): so \(T\) is in this case just given by the `polynomial of endofunctors' sending \(((S_n\mid n : \bbN),q,q') : \Set/\bbN\times\opid\clQ\) to
	\[T((S_n),q,q') = \sum_{n:\bbN}S_n\times t^n(q,q').\]
	Compare this expression with the representation of how a linear endomorphism \(T : V\to V\) of a \(k\)-vector space \(V\) yields a \(k[X]\) representation acting with a polynomial \(g(X) = \sum_i \lambda_i X^i\) on a vector \(v :  V\) as \(\sum_i \lambda_i T^i(v)\), \cf \cite[VIII.7.1]{grillet}.
\end{example}
By a similar token, more generally, the category \(\TwoTDX(\clA,\varnothing)\) can be thought of as a category of pairs \((\clQ,\{t_a\})\) where \(t : \clA\times\opid\clQ\to\Set\) is an \(\clA\)-indexed profunctor. This somehow categorifies the well-known notion of a (\(A\)-labeled, nondeterministic) transition system, usually presented as a function \(A\times Q\to \Bool^Q\) (or more conveniently, as a coalgebra for the functor \(\Bool^{\blank\times A}\)). The previous discussion substantiates the idea that hom-categories \(\TwoTDX(\clA,\clB)\) have a rather direct combinatorial interpretation when \(\clA,\clB\) are `very small'.
\begin{remark}[\twoDash transducers between categories of very low cardinality]
	\label{transducers_low_cardinality}
	Let \(\clI\) be the terminal category, and \(\clM=\TwoTDX(\clI,\clI)\) the hom-category of transducers \(\clI\tdto \clI\); clearly, \(\clM\) is monoidal with respect to composition, and \(\clL\) above is a \(\clM\)-bimodule. Equally clearly, it is also an \(\clN=\TwoTDX(\varnothing,\varnothing)\)-left module, but \(\clN\) has way less structure and its action is quite more trivial.\footnote{In more detail: \(\clN\) has objects the pairs \((\clQ,\hom_\clQ)\) of a category plus its identity profunctor, and morphisms all the functors \(F : \clP\to\clQ\); as such, it is monoidally equivalent to \(\VCat\). The composition as defined in \autoref{compozia} is easily seen to be isomorphic to the projection \(\clN\times\clL\to\clL\).}

	The category \(\TwoTDX(\clI,\clI)\) is easily seen to consist of pairs \((\clQ,s : \opid\clQ\to \Set/\bbN)\), and thus the composition \(\clI \tdto[(\clQ,s)]\clI \tdto[(\clP,t)]\varnothing\) as outlined in \autoref{compozia} boils down to the functor
	\[\label{compo_rule_when_domain_small}\vxy[@R=0cm]{
		\opid\clQ\times\opid\clP \ar[rr]^-{s\times\opid\clP} && \Set/\bbN\times\opid\clP \ar[r]^-T & \Set
		}\]
	sending \((q,q',p,p')\) to \(\sum_{n:\bbN} s(q,q')_n\times t^n(p,p')\).
	It is also interesting to work out what the composition map
	\[\vxy{
			\TwoTDX(\clI,\varnothing)\times\TwoTDX(\varnothing,\clI) \ar[r] & \TwoTDX(\varnothing,\varnothing)
		}\]
	boils down to given these characterizations; the result must reduce to the hom-functor of a category, but which one? Consider a diagram of shape
	\[\label{nbdy_expect_free_promonad}\vxy{
		\varnothing \ar[r]|@-{*}^{(\clQ,s)}& \clI \ar[r]|@-{*}^{(\clP,t)}& \varnothing
		}\]
	the \twoDash transducer \((\clQ,s)\) is just the hom-functor of a category \(\clQ\), enriched over \(\Set/\bbN\) in the trivial way, \ie describing the \(\bbN\)-graded set constant at \(\clQ(q,q')\). So, the composition in \eqref{compo_rule_when_domain_small} reduces to
	\begin{align*}
		T((S_n),q,q') & \cong \sum_{n:\bbN}s(q,q')_n\times t^n(p,p')                                            \\
		              & \cong \clQ(q,q')\times\sum_{n:\bbN} t^n(p,p')                                           \\
		              & \cong \clQ(q,q')\times \clP_{t^*}(p,p') = (\clQ\times\clP_{t^*})\big((q,p),(q',p')\big)
	\end{align*}
	where \(\clP_{t^*}\) is the category obtained from the free promonad \(\sum_n t^n\) on \(\clP\), and \(\clQ\times\clP_{t^*}\) the product of categories.\footnote{%
	Last, one can consider a composition of cells of the same type, but in the opposite direction, \(\vxy{\clI \ar[r]|@-{*}^{(\clP,t)}& \varnothing \ar[r]|@-{*}^{(\clQ,s)}& \clI}\),	yielding a functor of type \(\opid\clQ\to\Set/\bbN\) defined as the trivial enrichment of \(t(p,p')\) over \(\Set/\bbN\).%
	}
\end{remark}
\begin{remark}
	What interpretation does the above construction have? Among different analogies one can find for \eqref{compo_rule_when_domain_small} in the realm of linear algebra, the following seems the most streamlined: let \(k\) be a field; let \(\bbN\pitchfork k\cong \prod_{n : \bbN}k\) be the \emph{power} of \(k\) by \(\bbN\), regarded as a \(k\)-algebra.

	Every given linear operator \(T : V\to V\) of an \(n\)-dimensional \(k\)-module \(V\) is in an evident sense a matrix \([n]\times[n] \to k\), and a sequence of endomorphisms \(A_n : W\to W\) of another finite-dimensional \(k\)-module \(W\) can be regarded as a single matrix \([m]\times [m]\to\bbN\pitchfork k\); then, one can consider the linear operator \(W\otimes V\), defined as
	\[\sum_{n\ge 0} A_n\otimes T^n : w\otimes v\mapsto \sum_{n\ge 0} A_nw\otimes T^nv\label{serione}\]
	provided the sum makes sense (either because \(A_n\) is almost-everywhere zero sequence, or because \(T\) is nilpotent, or because one can equip \(V,W\) with topologies in which \eqref{serione} converges). This can be seen as an element of \(\text{End}_k(W)\llbracket T\rrbracket\) in a suitable sense. If \(1\le q,q'\le m\), \(1\le p,p'\le n\) the matrix element of \(\sum_{n\ge 0} A_n\otimes T^n\) at the entry \(((p,q),(p',q'))\) looks precisely \(\sum_{n\ge 0} (A_n)_{qq'}\otimes (T^n)_{pp'}\).
\end{remark}
Let's now turn to the definition of the (posetal) bicategory of 1-transducers, and conclude the section, before turning to the study of categorical properties of \(\TwoTDX\).
\begin{remark}\label{bic_of_1_tnd}
	The bicategory \(\TDX\) of 1-transducers is defined specializing the above definition to the case where the base of enrichment is the Boolean algebra \(\Bool=\{0,1\}\) regarded as a Cartesian closed category. \(\TDX\) has
	\begin{itemize}
		\item objects the sets \(A,B,C,D,\dots\);
		\item 1-cells \(A\tdto B\)	the 1-transducers \((Q, t : A^* \to M(Q,\malg[\Bool]{B}))\), \ie the functions of type
		      \[\vxy{
				      t : A^*	\times Q\times Q \ar[r] & \Bool^{B^*}=\malg[\Bool]{B}
			      }\]
		\item 2-cells \(f : (P,s)\To (Q,t)\) the functions \(f : P\to Q\) between carriers such that
		      \[\forall(\una,p,p') : s(\una,p,p')\le t(\una,fp,fp')\]
		      (the inequality has to be interpreted pointwise, or rather, as the set-theoretic inclusion of the subset of which \(s(\una,p,p')\) is the characteristic function into the subset of which \(t(\una,fp,fp')\) is the characteristic function).
	\end{itemize}
	Composition of 1-cells \(A \tdto[(P,s)] B\tdto[(Q,t)] C\) is defined through the composition of relations as
	\[\vxy[@C=5mm]{
			A^*\times (P\times Q)\times (P\times Q)\ar@{=}[r] & (A^*\times P\times P)\times Q\times Q \ar[r]^-s & \Bool^{B^*} \times Q\times Q \ar[r]^-T & \Bool^{C^*}\\
		}\]
	where \(T\) is the relation induced by the universal property of \(\Bool^{B^*}\) as free quantale over \(B\), given that \(M(Q, \Bool^{C^*})\) is a quantale.

	Composition of 2-cells \(P \xto f Q\xto g R\) ends up being simply witnessed by the inclusion
	\[\forall(\una,p,p') : r(\una,p,p')\le s(\una,fp,fp')\le t(\una,gfp,gfp').\]
\end{remark}
\section{Properties of \(\TwoTDX\)}\label{properties}
The present section is the core of our work, focusing on the properties of 2-transducers as defined in \autoref{definition_the_bicategory_bitwotran}: we start by studying completeness and cocompleteness	properties of 2-transducers (in \autoref{sec_completeness}); we continue assessing the existence of monoidal structures (in \autoref{sec_monoidality}), and then we classify adjunctions (there are very few; \cf \autoref{very_few}), while describing monads inside our bicategory (in \autoref{whatsa_monad}); lastly, we focus on the relations with other bicategories of automata (in \autoref{sec_relations-automata}).
\subsection{Completeness}\label{sec_completeness}
The existence of limits and colimits in \(\TwoTDX\) could be inferred from some of the equivalent descriptions for \(\TwoTDX(\clA,\clB)\) given in \autoref{remark_many_versions_of_a_category}.

However, it is more natural to study a \emph{double} category \(\DTDX\) in the way hinted at in \autoref{2cell_theta_triangle}, as this latter presentation makes it easier to assess its cocompleteness and often (when there are tabulators in the double category of \(\clV\)-profunctors) also its completeness; on the contrary, the bicategory spanned by globular cells inherits at best only lax limits and colimits of lax functors (\cf \cite[15.18]{Garner2016}).

Let's make precise the definition of such a double category, and subsequently we will proceed to construct limits and colimits in \(\DTDX\), proving they all exist (more precisely, we construct products in \autoref{products_in_TwoTDX}, equalizers in \autoref{equalizers_in_TwoTDX}; the construction of colimits suffers the complexity of colimits in \(\Cat\): it is easy to construct coproducts in \(\DTDX\), we do so in \autoref{coproducts_in_TwoTDX}; it is more difficult to exhibit a direct construction of coequalizers, but in \autoref{coequalizers_in_TwoTDX} we prove that one can recycle the argument usually given to build colimits of profunctors, \cf \cite[§6.3]{grandispare1999limits}). Cotabulators exist, as we prove in \autoref{cotabs}, at least for bases of enrichment where profunctors have them; tabulators do not, and the reason is more intrinsic; in \autoref{not_all_tabs} we outline it.
\begin{definition}[The double category of transducers]\label{dbcat_of_tdx}
	The double category \(\DTDX\) of \twoDash transducers has
	\begin{itemize}
		\item objects are small \(\clV\)-categories \(\clA,\clB\), etc.;
		\item a tight cell \(F : \clA\to\clB\) is a \(\clV\)-functor;
		\item a loose cell \((\clQ,t) : \clA \tdto \clB\) is a profunctor as in \ref{tdx_char_3}, \ie of type
		      \[\vxy{
				      t : \clA \times \opid{\clQ} \times (\clB^*)^\op \ar[r] & \clV
			      }\]
		\item a cell  \((U,\alpha)\) with frame
		      \[\vxy{
			      \clA \ar[d]_F \ar[r]|-@{*}^-{(\clQ,s)}& \clB\ar[d]^G \\
			      \clA' \ar[r]|-@{*}_-{(\clP,t)}& \clB'
			      }\]
		      consists of a pair where \(U : \clQ\to\clP\) is a functor and \(\alpha\) is a natural transformation with components
		      \[\vxy{
				      \alpha : s(a,q,q')(b) \ar[r] & t(Fa,Uq,Uq')(Gb),
			      }\]
		      \ie a natural transformation filling the diagram
		      \[\vxy{
				      \clA \times \opid{\clQ} \times \clB^\op \ar[dr]^s\ar[dd]_{F\times\opid U\times G}\drtwocell<\omit>{<4>\alpha}&\\
				      & \clV\\
				      \clA' \times \opid{\clP} \times (\clB')^\op \ar[ur]_t
			      }\]
	\end{itemize}
\end{definition}
The construction of some tight limits and colimits mirrors the way in which they are already constructed in the double category \(\DProf\) of profunctors, \cf \cite[6.3]{grandispare1999limits}. The only subtlety arises in taking care of the grading components \(\clQ\) in a family of 1-cells forming a co/cone.

In the following we sketch the construction of limits and colimits that are building blocks for all (tight, and double) co/limits, or we provide counterexamples to their existence. Moreover, from now on, we focus solely on the case where \(\clV=\Set\).
\begin{construction}[Coproducts in \(\DTDX\)]\label{coproducts_in_TwoTDX}
	Coproducts in \(\DTDX\) are a bit more involved to define, as the usual recipe for building them in the double category of profunctors does not work out of the box. However, it's easy to adapt the same argument.

	Recall, for convenience, the universal property of (binary) double coproducts: given \(t : \clA\tdto\clB\) and \(s : \clC \tdto\clD\) we have to find cells
	\[\label{coprod_injezioni}\vxy{
		\clA \ar@{}[dr]|-{\sm[r]{I_s & \iota_s}}\ar[d]\ar[r]|-@{*}^-s & \clB \ar[d]& \clC\ar@{}[dr]|-{\sm[r]{I_t & \iota_t}} \ar[d]\ar[r]|-@{*}^-t & \clD\ar[d] \\
		\clA+\clC \ar[r]|-@{*}_-{s\oplus t} & \clB+\clD & \clA+\clC \ar[r]|-@{*}_-{s\oplus t} & \clB+\clD
		}\]
	with loose codomain \(s\oplus t : \clA+\clC \tdto \clB + \clD\) a \twoDash transducer between the coproducts in \(\Cat\), so that for any other pair of cells
	\[\vxy{
		\clA \ar@{}[dr]|-{\sm[r]{H_s & \theta_s}}\ar[d]\ar[r]|-@{*}^-s & \clB \ar[d]& \clC \ar@{}[dr]|-{\sm[r]{H_t & \theta_t}}\ar[d]\ar[r]|-@{*}^-t & \clD\ar[d] \\
		\clX \ar[r]|-@{*}_-r & \clY & \clX \ar[r]|-@{*}_-r & \clY
		}\]
	factors as
	\[\vxy{
		\clA \ar@{}[dr]|-{\sm[r]{I_s & \iota_s}}\ar[d]\ar[r]|-@{*}^-s & \clB \ar[d]& \clC \ar@{}[dr]|-{\sm[r]{I_t & \iota_t}}\ar[d]\ar[r]|-@{*}^-t & \clD\ar[d] \\
		\clA+\clC \ar[d]\ar@{}[dr]|-{\sm[r]{\bar H & \bar\theta}}\ar[r]|-@{*}_-{s\oplus t} & \clB+\clD \ar[d]& \clA+\clC \ar@{}[dr]|-{\sm[r]{\bar H & \bar\theta}}\ar[d]\ar[r]|-@{*}_-{s\oplus t} & \clB+\clD\ar[d]\\
		\clX \ar[r]|-@{*}_-r & \clY & \clX \ar[r]|-@{*}_-r & \clY
		}\]
	for a unique cell \((\bar H,\bar\theta)\). In order to find \(s\oplus t\) as above, we leverage on the existence of a canonical map
	\[\vxy{\gamma : \freerig\clB + \freerig\clD \ar[r] & \freerig{(\clB+\clD)}}\]
	obtained	applying the functor \(\freerig{\blank}\) to the coproduct cospan \(\clB \xto{i_\clB}\clB+\clD\xot{i_\clD}\clD\). More explicitly, the left arrow is obtained as composition
	\[\vxy{
		\Set^{(\clB^*)^\op} \ar[r]^-{\Lan_{i_\clB^*}}& \Set^{(\clB^*+\clD^*)^\op} \ar[r] &
		\Set^{(\clB^*\oast\clD^*)^\op}\ar@{=}[r]^\sim &
		\Set^{((\clB+\clD)^*)^\op}
		}\]
	and similarly we do for the right arrow; note that by \(\oast\) we mean the coproduct (`free product') of monoidal categories, which has a canonical map from the coproduct of \(\clB^*,\clD^*\) \emph{in \(\Cat\)}. This defines a \twoDash transducer
	\[\vxy{
		(\clA+\clC)\times \opid{(\clP+\clQ)} \ar[r]^-{s\oplus t} & \freerig\clB + \freerig\clD \ar[r]^-\gamma & \freerig{(\clB+\clD)}
		}\]
	whose domain is of the form \(\clA\times\opid\clP + \clC\times\opid\clQ + (\text{higher terms})\); on the first two summands, we use the given transducer \(s\), and then embed in the coproduct \(\freerig{\clB} + \freerig\clD\), and similarly we do on the second summand. On every other summand, we pick the initial object of \(\freerig{(\clB+\clD)}\).

	This gives double cells as in \eqref{coprod_injezioni} which witness the double coproduct.
\end{construction}
\begin{construction}[Products in \(\DTDX\)]\label{products_in_TwoTDX}
	Upon reordering the factors, from a family of \twoDash transducers \((\clQ_i,t_i) : \clA_i\tdto\clB_i\) we obtain a \twoDash transducer between the products of all \(\clA_i\)'s and \(\clB_i\)'s,
	\[\vxy{
			\prod_{i :  I}\clA_i\times\opid{\big(\prod_{i :  I} \clQ_i\big)}\times\big(\prod_{i :  I}\clB_i^*\big)^\op\ar[r] & \prod_{i :  I}\Set
		}\]
	now, post-composing with the \(I\)-fold Cartesian product functor \(\prod_I : \Set^I \to\Set : (A_i)\mapsto \prod_{i :  I}A_i\), and pre-composing this with the canonical map \(\big(\prod_{i :  I}\clB_i\big)^*\to\prod_{i :  I}\clB_i^*\), we obtain a \twoDash transducer \(\prod_I t_i : \prod_{i :  I} \clA_i\tdto\prod_{i :  I} \clB_i\). Dually to the previous construction, the universal	property of double products is verified, at the level of cells, if every family of cells as on the left factors as on the right:
	\[\vxy[@R=7mm]{
		\clX \ar@{}[ddr]|{\sm[r]{H_i &\theta_i}}\ar[r]|-@{*}^{(\clP,s)}\ar[dd]& \clY \ar[dd] & \clX \ar[d]\ar[r]|-@{*}^{(\clP,s)}& \clY \ar[d]\\
		& & \prod_i \clA_i \ar@{}[dr]|{\sm[r]{\hat H & \hat\theta}}\ar[r]|-@{*}\ar[d]& \prod_i \clB_i\ar[d]\\
		\clA_i \ar[r]|-@{*}_-{(\clQ_i,t_i)}& \clB_i & \clA_i \ar[r]|-@{*}_-{(\clQ_i,t_i)}& \clB_i
		}\label{theta_i_for_prod}\]
	for a unique \(\sm[r]{\hat H & \hat\theta}\); unwinding the type of \(\hat\theta\), this is true (when taking \(\hat F,\hat G,\hat H\) to be the functors induced from \(\prod_i \clA_i,\prod_i \clB_i,\prod_i \clQ_i\) respectively) if and only if a unique cell
	\[\vxy{
			\hat\theta : s(x,p,p')(\uny) \ar[r] & \prod_{i :  I} t_i(F_i x,H_ip,H_ip')(G_i^*\uny)
		}\]
	is induced; this is the case, using the components of \(\theta_i\) as given in \eqref{theta_i_for_prod}.
\end{construction}
To conclude the discussion about colimits, we observe that one can construct reflexive coequalizers (and together with coproducts, constructed above, prove that \(\DTDX\) admits all tight colimits). A similar argument could be adapted to the existence of \emph{general} coequalizers, at the price of handling a more complicated shape of cocone at a certain step of the proof. In short, it is simpler to construct \emph{reflexive} coequalizers, because at some point the siftedness of the category \(\{\xymatrix{0 \arar{}{} & 1\ar[l]}\}\), although not strictly necessary, will turn out to shorten and simplify our task. This suffices to prove cocompleteness, as reflexive coequalizers and coproducts build all colimits.

\medskip
To construct reflexive coequalizers in \(\DTDX\), it will be crucial to employ the two following facts:
\begin{itemize}
	\item the explicit construction of coequalizers in the double category of profunctors, obtained as follows (\cf \cite[§6.3.b]{grandispare1999limits}): start with two cells
	      \[\label{qui}\vxy{
		      \clA_0\ar[r]|-@{*}^{s_0}\drtwocell<\omit>{\alpha_0}\ar[d]_f & \clB_0 \ar[d]^{f'}& \clA_0 \ar[r]|-@{*}^{s_0}\drtwocell<\omit>{\alpha_1}\ar[d]_g& \clB_0 \ar[d]^{g'}\\
		      \clA_1\ar[r]|-@{*}_{s_1} & \clB_1 & \clA_1\ar[r]|-@{*}_{s_1} & \clB_1
		      }\]
	      and use the tuples \((f,f',\alpha_0),(g,g',\alpha_1)\) to induce a parallel pair of functors
	      \[\vxy{
			      \collage{\clA_0}{s_0}{\clB_0}\ar[dr] \arar[rr] FG && \collage{\clA_1}{s_1}{\clB_1}\ar[dl]\\
			      &\{0\to 1\} &
		      }\]
	      Coequalize the pair \((F,G)\), which is `barreled' over \(\{0\to 1\}\), meaning that \(F,G\) make the triangle of maps into the interval commute;\footnote{André Joyal calls a \emph{barrel} an object of the category \(\Cat/\{0\to 1\}\); profunctors and barrels are equivalent through the collage construction sending a profunctor \(s : \clX\pto\clY\) to the map \(\collage\clX s\clY \to \{0\to 1\}\) having fibers \(\clX\) over \(0\) and \(\clY\) over \(1\).} clearly, the category \(\clZ\) so obtained also admits a functor over \(\{0\to 1\}\), hence it defines a profunctor. A little diagram chase shows that \(\clZ\) is of the form \(\collage\clA{p_\clZ}\clB\) if \(\clA,\clB\) are respectively the coequalizers of the pairs \((f,g)\) and \((f',g')\), for a certain profunctor \(p_\clZ\) between them.
	\item the fact (\cf \autoref{tdx_char_4}) that a transducer \((\clQ,t) : \clA\tdto\clB\) is just a profunctor of type
	      \[\vxy{
		      \clA\times\clQ \ar[r]|-@{|} &\clQ\times\clB^*.
		      }\]
\end{itemize}
We are now going to adapt, and repeat, the above argument.
\begin{construction}[Coequalizers in \(\DTDX\)]\label{coequalizers_in_TwoTDX}
	Start with two cells similar to the ones in \eqref{qui},
	\[\vxy{
		\clA_0\ar[r]|-@{*}^{(\clQ_0,s_0)}\ar@{}[dr]|{\sm[r]{H&\alpha_0}}\ar[d]_f & \clB_0 \ar[d]^{f'}& \clA_0 \ar[r]|-@{*}^{(\clQ_0,s_0)}\ar@{}[dr]|{\sm[r]{K&\alpha_1}}\ar[d]_g& \clB_0 \ar[d]^{g'}\\
		\clA_1\ar[r]|-@{*}_{(\clQ_1,s_1)} & \clB_1 & \clA_1\ar[r]|-@{*}_{(\clQ_1,s_1)} & \clB_1
		}\]
	where \(f,g : \clA_0 \rightrightarrows \clA_1\), \(H,K : \clQ_0 \rightrightarrows \clQ_1\) and \(f',g' : \clA_0 \rightrightarrows \clB_1\) all have a common section \(u,U\) and \(u'\) respectively.	Let's denote the coequalizers of said pairs in \(\Cat\) as the following diagrams
	\[\label{baubau}\vxy{
			\clA_0 \arar{f}{g}& \clA_1\ar[r] & \bar\clA & \clB_0 \arar{f'}{g'}& \clB_1\ar[r] & \bar\clB & \clQ_0 \arar HK & \clQ_1\ar[r] & \bar\clQ.
		}\]
	Repeating the argument valid in \(\DProf\), recalled above, one obtains a diagram of type
	\[\vxy{
			\collage {(\clA_0\times\clQ_0)}{s_0}{(\clQ_0\times\clB_0^*)}\arar FG & \collage {(\clA_1\times\clQ_1)}{s_1}{(\clQ_1\times\clB_1^*)}
		}\]
	where \(F\) is induced using \(f,f'\) and \(H\), and \(G\) is induced using \(g,g',K\) in the following way: in the notation of \autoref{cotabs}, denoting with \(j_\clX,j_\clY\) the collage inclusions of \(\clX,\clY\) into \(\collage\clX p\clY\),
	\begin{gather*}
		\begin{cases}
			F(j_{\clA_0\times\clQ_0}(a,q)) = j_{\clA_1\times\clQ_1}(fa,Hq) \\
			F(j_{\clQ_0\times\clB_0^*}(q,\unb)) = j_{\clQ_1\times\clB_0^*}(Hq,(f')^*\unb)
		\end{cases}\\
		\begin{cases}
			G(j_{\clA_0\times\clQ_0}(a,q)) = j_{\clA_1\times\clQ_1}(ga,Kq) \\
			G(j_{\clQ_0\times\clB_0^*}(q,\unb)) = j_{\clQ_1\times\clB_1^*}(Kq,(g')^*\unb)
		\end{cases}
	\end{gather*}
	furthermore, \(F,G\) have a common section, induced by the sections \(u,u',U\) above. From this definition, it follows at once that the projection from \(\collage {(\clA_1\times\clQ_1)}{s_1}{(\clQ_1\times\clB_1^*)}\) coequalizes \((F,G)\), whence a diagram
	\[\vxy{
			\collage {(\clA_1\times\clQ_1)}{s_1}{(\clQ_1\times\clB_1^*)}\ar[r]^-D  & \clC \ar[r]^-P & \{0\to 1\}
		}\]
	turning the coequalizer \(\clC\) of \((F,G)\) into a barrel, hence a profunctor. Now, the claim is that the fibers of \(P\) over \(0\) and \(1\) are respectively isomorphic to \(\bar\clA\times\bar\clQ\) and \(\bar\clQ\times\bar\clB^*\), notation as in \eqref{baubau}; there are unique maps \(D_0,D_1\) filling the diagrams
	\[\vxy{
			\clA_1\times\clQ_1\ar[d]_{D_0}\ar[r] & \collage {(\clA_1\times\clQ_1)}{s_1}{(\clQ_1\times\clB_1^*)} \ar[d]^D\\
			\clC_0 \ar[r]_{j_0} & \clC
		}\qquad
		\vxy{
			(\clQ_1\times\clB_1^*)\ar[d]_{D_1}\ar[r] & \collage {(\clA_1\times\clQ_1)}{s_1}{(\clQ_1\times\clB_1^*)} \ar[d]^D\\
			\clC_1 \ar[r]_{j_1} & \clC
		}\]
	it is easily seen that \(D_0,D_1\) coequalize the respective pairs in the diagrams
	\[\vxy{
		\clA_0\times\clQ_0 \arar{f\times H}{g\times K} & \clA_1\times\clQ_1 \ar[r]^-{D_0} & \clC_0 &
		\clQ_0\times\clB^*_0 \arar{(f')^*\times H}{(g')^*\times K} & \clQ_1\times\clB^*_1 \ar[r]^-{D_1} & \clC_1.
		}\]
	From here, one applies a completely routine argument to check that these are coequalizer diagrams; this is sufficient to prove the claim, since the siftedness of the indexing category for reflexive coequalizers entail that
	\[\bar\clA\times\bar\clQ \cong \text{coeq}\big(\xymatrix{
			\clA_0\times\clQ_0 \arar xy & \clA_1\times\clQ_1
		}\big)
		\qquad \bar\clQ\times\bar\clB^* \cong\text{coeq}\big(\xymatrix{
			\clQ_0\times\clB_0^* \arar xy & \clQ_1\times\clB_1^*
		}\big);\]
	more explicitly, the siftedness of \(\clJ=\{\xymatrix{0 \arar{}{} & 1\ar[l]}\}\) was sufficient to prove the claim exhibiting a cocone just for the diagonal arrows in
	\[\xymatrix{
			\clA_0\times\clB_0 \arar[d]{}{} \arar[r]{}{} & \clA_0\times\clB_1 \ar@{->}[r] \arar[d]{}{} & \clA_0\times\bar\clB \arar[d]{}{} \\
			\clA_1\times\clB_0 \ar@{->}[d] \arar[r]{}{} & \clA_1\times\clB_1 \ar@{->}[d] \ar@{->}[r] & \clA_1\times\bar\clB \ar[d]\\
			\bar\clA\times\clB_0 \arar[r]{}{} & \bar\clA\times\clB_1 \ar[r]& \bar\clA\times\bar\clB.
		}\]
	Instead of being derived from the existence of coproducts + reflexive coequalizers, coequalizers can be built directly trying to construct a cocone for the whole diagram above.
\end{construction}
\begin{construction}[Equalizers in \(\DTDX\)]\label{equalizers_in_TwoTDX}
	Start with two cells
	\[\label{eq_cells}\vxy{
		\clA_0\ar[r]|-@{*}^{(\clQ_0,s_0)}\ar@{}[dr]|{\sm[r]{H&\alpha_0}}\ar[d]_f & \clB_0 \ar[d]^{f'}& \clA_0 \ar[r]|-@{*}^{(\clQ_0,s_0)}\ar@{}[dr]|{\sm[r]{K&\alpha_1}}\ar[d]_g& \clB_0 \ar[d]^{g'}\\
		\clA_1\ar[r]|-@{*}_{(\clQ_1,s_1)} & \clB_1 & \clA_1\ar[r]|-@{*}_{(\clQ_1,s_1)} & \clB_1
		}\]
	and consider the equalizer diagrams
	\[\vxy{
			\clA \ar[r] & \clA_0 \arar{f}{g}& \clA_1 & \clB \ar[r]& \clB_0 \arar{f'}{g'}& \clB_1 & \clQ\ar[r] & \clQ_0 \arar HK & \clQ_1.
		}\]
	Observe that applying the free monoidal category functor to the equalizer of \((f',g')\) does not yield an equalizer in general, but there still exists a canonical map \(\clB^* \to \text{eq}((f')^*,(g')^*) = \clE\); this yields a unique way to fill the triangle
	\[\vxy[@C=1.3cm]{
		\ar[d]\clA\times\opid\clQ\times (\clB^*)^\op \ar[dr]^-{s} \\
		\ar[d]\clA\times\opid\clQ\times\clE &\Set \\
		\clA_0\times\opid{\clQ_0}\times (\clB^*_0)^\op \ar[ur]_-{s_0}
		}\]
	with an invertible natural transformation. The verification that this yields the equalizer of \eqref{eq_cells} is straightforward.
\end{construction}
Cotabulators are also not difficult to construct mimicking their construction in \(\DProf\); on the other hand, few tabulators exist: the problem seems to be that cells of shape
\[\vxy{
	\clX \ar@{}[dr]|{\sm[s]{q_0 & \alpha}}\ar@{=}[r]|-@{*}\ar[d] & \clX\ar[d] \\
	\clA \ar[r]|-@{*}_{(\clQ,t)} & \clB
	}\]
with identity loose domain specify, among other data, a pointing for the state category \(\clQ\); a limit of this kind exists only if \(\clQ\) can be `universally pointed', and this is the case only if \(\clQ\) is a category with a single object (and morphism, imposing the universal property in both dimensions). Let's expand on both of these points.
\begin{construction}[Cotabulators in \(\DTDX\)]\label{cotabs}
	The construction of cotabulators goes as in the double category of profunctors. Let's recall what the universal property of \(\cotab(\clQ,t)\) is: there must be a cell (a bit more conveniently depicted as triangular, collapsing the horizontal side if it's a loose identity)
	\[\vxy{
		\clA \ar[dr]\ar[rr]^-{(\clQ,t)}&\dtwocell<\omit>{\sm[r]{!&\kappa}}& \clB\ar[dl]\\
		&\cotab(\clQ,t)&
		}\]
	such that every	other cell as in the left below factors uniquely as in the right:
	\[\label{ahfdsoaf}\vxy[@R=6mm]{
		\clA \ar[ddr]\ar[rr]^-{(\clQ,t)}&\dtwocell<\omit>{\sm[r]{!&\theta}}& \clB\ar[ddl] & \clA\ar[rr]^-{(\clQ,t)}\ar[dr] &\dtwocell<\omit>{\sm[r]{!&\kappa}}& \clB\ar[dl]\\
		& & & &\cotab(\clQ,t)\ar[d]_Y& \\
		&\clY& &  &\clY&
		}\]
	In this construction, \(\cotab(\clQ,t)\) is a category \(\clK\) with its hom functor and \(\kappa\) is a natural transformation of type
	\[\vxy{t(a,q,q')(\unb) \ar[r] & \clK(j_\clA a,j_{\clB^*}\unb)}\]
	so that a natural guess for \(\kappa\) is the natural transformation obtained for first taking the profunctor \(t_0:=\colim_{\opid\clQ}t : \clA\pto\clB^*\), and then the collage \(\clA\uplus_{t_0}\clB^*\), equipped with the well-known canonical cospan \(\clA \xto{j_\clA}\clA\uplus_{t_0}\clB^*\xot{j_{\clB^*}}\clB^*\). This is easily seen to yield the universal property just stated, since a natural transformation \(\theta\) as in \eqref{ahfdsoaf} on the left induces a unique cell in the double category \(\DProf\) of shape
	\[\vxy{
		\clA \ar[d]_F \ar[r]|-@{*}^{t_0}& \clB^*\ar[d]^{G^*} \\
		\clY \ar[r]|-@{*}_h& \clY^*
		}\]
	whence a unique cell in \(\DTDX\) as on the right.
\end{construction}
\begin{construction}[Obstruction to the existence of tabulators in \(\DTDX\)]\label{not_all_tabs}
	Not all tabulators exist	in \(\DTDX\); for a 1-cell \((\clQ,t) : \clA\tdto\clB\) to admit a tabulator there must be a cell of the form
	\[\vxy{
		&\tab(\clQ,t)\dtwocell<\omit>{\sm[r]{q_0&\tau}}\ar[dr]^{\tab_r}\ar[dl]_{\tab_l}&\\
		\clA \ar[rr]|-@{*}_-{(\clQ,t)}&& \clB
		}\]
	such that every other cell as in the left below factors uniquely as in the right:
	\[\label{facto_tabu_diags}\vxy[@R=6mm]{
		& \clX\ddtwocell<\omit>{\sm[r]{q&\chi}}\ar[ddr]^R\ar[ddl]_L && & \clX\ar[d]^X\\
		&& & &\tab(\clQ,t)\dtwocell<\omit>{\sm[r]{q_0&\tau}}\ar[dr]^{\tab_r}\ar[dl]_{\tab_l}& \\
		\clA \ar[rr]|-@{*}_{(\clQ,t)}&& \clB & \clA \ar[rr]|-@{*}_{(\clQ,t)}&& \clB
		}\]
	The cell \(\sm[r]{q&\chi}\) chooses an object of \(\clQ\); but in order for the commutativity above to hold, this object has to be equal to \(q_0\); if \(\clQ\) does not have a single object, this can't be done. Given this, the tabulator of a transducer \((M,t)\) having a monoid as state category, of the form
	\[\vxy{
			t : \clA\times\opid M\times (\clB^*)^\op \ar[r] & \Set
		}\]
	is computed regarding the transducer as a \(\Set\)-profunctor of type \(\clA\times M \pto M \times\clB^*\); explicitly, the tabulator of \(t\) is the category \(\tab(M,t)\) having
	\begin{itemize}
		\item objects the triples \((a,\unb;\xi)\) where \(\xi  :  t(a,\bullet,\bullet)[\unb]\) (\(\bullet\) is the unique object of the category \(M\));
		\item arrows \((a,\unb;\xi)\to(x,\uny;\zeta)\) the quadruples \(\sm[r]{u	&	v \\ m	&	n}\) such that \(u : a\to x\), \(v : b\to y\), and \(m,n :  M\) are monoid elements, so that the two functions
		      \[\vxy[@R=-1mm@C=2cm]{
			      {t(a,\bullet,\bullet)[\unb]} \ar[r]^-{t(u,\bullet,n)[\unb]} & t(a,\bullet,\bullet)[\uny] & \ar[l]_-{t(a,m,\bullet)[v^*]}t(x,\bullet,\bullet)[\uny]\\
			      \xi \ar@{|->}[r]& \# & \ar@{|->}[l]\zeta
			      }\]
		      map \(\xi,\zeta\) to the same element, \ie \(t(u,\bullet,n)[\unb](\zeta) = t(a,m,\bullet)[v^*](\xi)\); the natural transformation \(\tau\) filling the tabulator cell sends an arrow as above into such common value in \(t(a,\bullet,\bullet)[\uny] =t(\tab_l(a,\unb;\xi),\bullet,\bullet)[\tab_r^*(x,\uny;\zeta)]\).
	\end{itemize}
	With this definition, one rotely proves a terminality property of \(\tau\) in the shape of \eqref{facto_tabu_diags}; a span \(\clA \xot L\clX\xto R\clB\) must define a unique functor \(\clX \to \tab(M,t)\) suitably applying \(L,R\).
\end{construction}
\begin{construction}[Companions and conjoints of tight arrows in \(\DTDX\)]
	It's easy to observe that the embedding \(\DProf\subseteq\DTDX : p\mapsto\bar p\) of \autoref{natural_embed} sends companions and conjoints of tight arrows \(F :\clA\to\clB\) to companions and conjoints of \(F\) regarded as a tight arrow in \(\DTDX\), meaning that the companion \(F_{*,\DTDX} =: F_\natural\) of \(F\) is \(\overline{(F_{*,\DProf})}\), and similarly for the conjoint.

	The construction of cells
	\[\vxy{
		\clA \ar@{=}[r]|-@{*} \ar@{=}[d] \ar@{}[dr]|{\sm[r]{\bang & \eta_\natural}} & \clA \ar@{->}[d]^{F} & \clA \ar@{->}[r]|-@{*}^{F_\natural} \ar@{->}[d]_{F} \ar@{}[dr]|{\sm[r]{\bang & \epsilon_\natural}} & \clB \ar@{=}[d] & \clB \ar@{->}[r]|-@{*}^{F^\natural} \ar@{=}[d] \ar@{}[dr]|{\sm[r]{\bang & \epsilon^\natural}} & \clA \ar@{->}[d]^{F} & \clA \ar@{->}[d]_{F} \ar@{=}[r]|-@{*} \ar@{}[dr]|{\sm[r]{\bang & \eta^\natural}} & \clA \ar@{=}[d] \\
		\clA \ar@{->}[r]|-@{*}_{F_\natural} & \clB & \clB \ar@{=}[r]|-@{*} & \clB & \clB \ar@{=}[r]|-@{*} & \clB & \clB \ar@{->}[r]|-@{*}_{F^\natural} & \clA
		}\]
	related by the companion and conjoint identities is straightforward, when one defines
	\begin{itemize}
		\item \(F_\natural\) as the 2-transducer \((\clI,F_\natural) : \clA\times\opid\clI\times (\clB^*)^\op \to \Set\) sending \((a,\bullet,\bullet,\unb)\) to \(\clB(F b,a)\) if \(\unb = b\kons \emptyList\) and \(\varnothing\);
		\item \(F^\natural\) as the 2-transducer \((\clI,F^\natural) : \clB\times\opid\clI\times (\clA^*)^\op \to \Set\) sending \((b,\bullet,\bullet,\una)\) to \(\clB(a,Fb)\) if \(\una = a\kons \emptyList\) and \(\varnothing\).
	\end{itemize}
\end{construction}
\subsection{Monoidality}\label{sec_monoidality}
The bicategory \(\TwoTDX\) is probably a monoidal bicategory in the sense of \cite{coherence-tricat}, but proving all necessary coherences is a daunting task. A more sleek approach is prescribed by \cite{hansen2019constructingsymmetricmonoidalbicategories}, and leverages on the existence of conjoints; we are content with recording, instead, the same result for the decategorified version of \(\TwoTDX\), the posetal bicategory of 1-transducers
\[\vxy{t : A^*\times Q \times Q \ar[r] & \pow{B^*}}\]
and transducer morphisms \((s,P)\to (t,Q)\) of \autoref{bic_of_1_tnd}.

The claim is that the (locally posetal) bicategory \(\TDX\) so defined is a monoidal bicategory (actually, a strict monoidal 2-category): at the best of our knowledge, even at this posetal level, this result is novel and it avoids coherence issues completely, since in a poset all diagrams commute. 
We will however state the definitions in the language and notation of \(\TwoTDX\).
\begin{remark}\label{stren}
	The functor \(\Set^{(\firstblank)^\op}\) sending a category to its presheaf category comes equipped with a tensorial strength having components
	\[\vxy{\tau : \freerig \clX\times \freerig \clY \ar[r] & \Set^{(\clX^*\times \clY^*)^\op} }\]
	defined as \((F,G)\mapsto \big((x,y)\mapsto Fx\otimes Gy\big)\).

	Composed with the functor \(\Pi=\langle\pi_\clX^*,\pi_\clY^*\rangle : (\clX\times\clY)^*\to \clX^*\times\clY^*\), this yields a functor
	\[\vxy{\sigma : \freerig \clX\times \freerig \clY \ar[r]^-\tau & \Set^{(\clX^*\times \clY^*)^\op} \ar[r]^-{\firstblank\circ\Pi}& \freerig{(\clX\times\clY)}}\]
\end{remark}
In order to exhibit a monoidal bicategory, we have to provide:
\begin{itemize}
	\item a 2-functor
	      \[\vxy{\firstblank\otimes\firstblank : \TwoTDX \times \TwoTDX \ar[r] & \TwoTDX.}\]
	      This is easy to define on objects, as \((\clA,\clC)\mapsto\clA \otimes_0\clC\) coincides with the product of categories \(\clA\times\clC\), and also on 1-cells, where one exploits the map
	      \(\sigma\) of \autoref{stren}. More precisely, let \((\clQ,s) : \clA\tdto\clB\) and \((\clP,t) : \clC\tdto\clD\) be two \twoDash transducers; consider the composite map \((\clQ,s)\otimes_1(\clP,t)\)
	      \[\vxy{
		      \clA^* \times  \clC^*\times ( \clP\times \clQ)\times ( \clP\times \clQ)\ar@{=}[d]_\wr \ar[r]& \freerig{( \clB\times \clD)}\\
		      (\clA^* \times  \clP\times \clP) \times  (\clC^*\times  \clQ\times \clQ) \ar[r]_-{t\times s}& \freerig \clB \times\freerig \clD\ar[u]_-\sigma
		      }\]
	      this defines \(\otimes\) on 1-cells.

	      Finally, given two 2-cells \((F,\theta) : s\To t\) and \((G,\omega) : s'\To t'\) (in the posetal case, this is the part that trivializes, as \(\theta\) and \(\omega\) reduce to pointwise inequalities in the free quantales \(\freerig B\) and \(\freerig D\) respectively),
	      \[\vxy[@R=5mm]{
		      \clA\times \opid{\clP} \drtwocell<\omit>{<3>\theta}\ar[dd]_-{\clA\times F^\op\times F}\ar[dr]^-s & &&
		      \clC\times \opid{\clX} \drtwocell<\omit>{<3>\omega}\ar[dd]_-{\clC\times G^\op\times G}\ar[dr]^-{s'} &\\
		      & \freerig{\clB} &&
		      &\freerig{\clD}\\
		      \clA\times \opid{\clQ} \ar[ur]_-{t}& &&
		      \clC\times \opid{\clY} \ar[ur]_-{t'}&
		      }\]
	      one defines \((F,\theta)\otimes (G,\omega) : s\otimes t \To s'\otimes t'\) as the 2-cell arising from the pasting
	      \[\vxy[@R=5mm]{
			      \clA\times\opid{\clP}\times\clC\times\opid{\clX}
			      \drtwocell<\omit>{<3>\kern1.5em\theta\times\omega}
			      \ar@{->}[dd] \ar@{->}[rrd] &  &  &  &  \\
			      &  & \freerig\clB\times\freerig\clD \ar@{->}[rr]^{\sigma} &  & \freerig{(\clB\times\clD)} \\
			      \clA\times\opid{\clQ}\times\clC\times\opid{\clY} \ar@{->}[rru] &  &  &  &
		      }\]
	      Observe how in terms of the double cells of \eqref{dbl_cell} this is simply the pointwise product
	      \[\vxy{
		      \clP\times\clX\drtwocell<\omit>{ }\ar[r]|-@{|}^{(s\times t)(a,-)}\ar[d]_-{F\times G} & \clP\times\clX\ar[d]^-{F\times G} \\
		      \clQ\times\clY\ar[r]|-@{|}_{(s'\times t')(a,-)} & \clQ\times\clY.
		      }\]
	      This concludes the construction of the 2-functor \(\otimes\).
	\item A two-sided unit for the \(\otimes\) operation; because of the way the tensor is defined, the only reasonable candidate for such a unit \(i : I\tdto I\) is the transducer \((\{0\},i)\) where \(\{0\}\) is a singleton state space, \(I = \{\bullet\}\) another singleton set, and
	      \[\vxy{i : I^* \times \{0\}\times \{0\}\ar[r] & \freerig{I}}\]
	      is the map sending \((n,0,0) :  I^*\times \{0\}\times \{0\} = \bbN\times \{0\}\times \{0\}\) to the singleton \(\{n\}\).
\end{itemize}
The invertible modifications yielding associators and unitors are, in the posetal case, equalities, as well as the pentagonator and the two 2-unitors, hence we get a strict monoidal 2-category \((\TDX,\otimes,I)\).
\subsection{Adjunctions and monads}\label{sec_adjunctions-monads}
Let's start spelling out the	definition of a monad in \(\DTDX\); we will find something similar to what happens with a monad in the bicategory \(\Prof\) of profunctors, which is well-known to be a category structure; here, a grading induced by the category of states enters the picture.

Adjunctions are the following notion awaiting to be analyzed, but they will turn out to be not very interesting (\cf \autoref{very_few} below; from this we derive that adjunctions can generate only rather uninteresting monads).
\begin{definition}[Monad in \(\DTDX\)]\label{whatsa_monad}
	A monad \(\Big(\clA,(\clQ,t),(\jmath,\eta),(\boxtimes,\mu)\Big)\) in \(\DTDX\) consists of
	\begin{itemize}
		\item a small category \(\clA\);
		\item a \twoDash transducer \((\clQ,t) : \clA \tdto \clA\), \ie a functor of type
		      \[\vxy{t : \clA^* \times \opid{\clQ} \times (\clA^*)^\op \ar[r] & \Set}\]
		\item equipped with a \emph{unit} 2-cell \((\jmath,\eta) : i\To t\) in \(\TwoTDX(\clA,\clA)\), where \(i\) is the identity transducer on \(\clA\); this is realized as a natural transformation
		      \[\label{etas}\vxy[@R=5mm]{
			      \clA\times \opid{\clI} \drtwocell<\omit>{<3>\eta}
			      \ar[dd]_-{\clA\times \jmath^\op\times \jmath}\ar[dr]^-i & \\
			      & \freerig{\clA} \\
			      \clA\times \opid{\clQ} \ar[ur]_-{t}&
			      }\]
		      Here \(\jmath : \clI\to\clQ\) picks an object of \(\clQ\) and (after extending \(t\) to \(\clA^*\)) \(\eta : \clA^*(\firstblank,\una)\To t(\una,\jmath,\jmath)\) is a natural transformation of presheaves on \(\clA^*\) (by Yoneda, this picks a distinguished element in \(t(\una,\jmath,\jmath,\una)\) for every \(\una : \clA^*\)),
		      and
		\item equipped with a \emph{multiplication} 2-cell \((\boxtimes,\mu):t\tranComp t \To t\), realized as a natural transformation
		      \[\label{mus}\vxy[@R=5mm]{
			      \clA\times (\clQ\times\clQ)^\op\times(\clQ\times\clQ) \drtwocell<\omit>{<3>\mu}
			      \ar[dd]_-{\clA\times \boxtimes^\op\times \boxtimes}\ar[dr]^-{t\tranComp t} & \\
			      & \freerig{\clB} \\
			      \clA\times \opid{\clQ} \ar[ur]_-{t}&
			      }\]
		      where \(\firstblank\boxtimes\firstblank : \clQ\times\clQ \to\clQ\) is a functor of two arguments, and \(\mu : t\tranComp t \To (t\cdot (\clA\times \boxtimes^\op\times \boxtimes))\) is a natural transformation with components (each of which is a natural transformation)
		      \[\vxy{\mu_{(\unu;(x,x'),(y,y'))} : \displaystyle\int^{\una} t(\unu;x,y)[\una]\times t(\una, x',y') \ar@{=>}[r] & t(\unu;x\boxtimes x',y\boxtimes y').}\]
		\item These data are subject to unitality and associativity axioms, expressed by the commutativities of suitable diagrams (which we will display once we set up a slightly better notation, below).
	\end{itemize}
\end{definition}
The corollary that we obtain is easily stated as follows: the state category \(\clQ\) part of the data that a monad in \(\DTDX\) specifies, is equipped by \((\boxtimes,\jmath)\) with a monoidal structure. Furthermore, each \(t(-,q,q',-) : \clA^* \times(\clA^*)^\op\to\Set\) defines a category structure on \(\clA^*\).
The maps in \eqref{mus} in fact are given in components at the object \(\unv\) by functions of type
\[\vxy{\mu_{(\unu;(x,x'),(y,y'))} : \displaystyle\int^{\una} t(\unu;x,y)[\una]\otimes t(\una, x',y')[\unv] \ar[r] & t(\unu;x\boxtimes x',y\boxtimes y')[\unv]}\notag\]
and if now we denote \(t(\unu;x,y)[\una]\) as \(t(\unu,\una)_{xy}\), then \(\mu\) corresponds to a cowedge in \(\una\) of type
\[\label{graded_composizia}\vxy{ t(\unu,\una)_{xy}\otimes t(\una,\unv)_{x',y'} \ar[r] & t(\unu,\unv)_{x\boxtimes x',y\boxtimes y'}} \]
subject to the condition that the diagram
\[\vxy[@R=4mm]{
	t(\unu,\una)_{xy}\otimes\Big(t(\una,\unv)_{x'y'}\otimes t(\unv,\unw)_{x''y''}\Big) \ar@{=}[d]_-\wr\ar[rr]^-{1\otimes \mu_{\una\unw,(x',y')(x'',y'')}^{\unv}} &&t(\unu,\una)_{xy}\otimes t(\una,\unw)_{x'x'',y'y''} \ar[dd]^{\mu_{\unu\unw,(x,y)(x'x'',y'y'')}^{\una}}\\
	\Big(t(\unu,\una)_{xy}\otimes t(\una,\unv)_{x'y'}\Big)\otimes t(\unv,\unw)_{x''y''} \ar[dd]_{\mu_{\unu\unv,(x,y)(x',y')}^{\una}\otimes 1}&&\\
	&&t(\unu,\unw)_{x(x'x''),y(y'y'')}\ar@{=}[d]^-\wr\\
	t(\unu,\unv)_{xx',yy'}\otimes t(\unv,\unw)_{x''y''}\ar[rr]_{\mu_{\unu\unw,(xx',yy')(x'',y'')}^{\unv}}&&t(\unu,\unw)_{(xx')x'',(yy')y''}
	}\]
commutes; a similar diagram expresses the unit law asserting that the pasting of 2-cells \(\mu\tranComp (t * \eta)\) obtained as
\[\vxy[@R=7mm]{
	\clA^*\times\opid{(\clQ\times\clI)} \drtwocell<\omit>{<3>\kern-5.5em\eta\times\opid\clQ}\ddrrtwocell<\omit>{<8>\mu}\ar@{->}[dd]_{\clA^*\times\opid{(\clQ\times \jmath)}} \ar@{->}[rd] \ar@/^1pc/@{->}[rrd]^-t &  &  \\
	& \freerig\clA\times\opid\clQ \ar@{->}[r]_-T & \freerig\clA \\
	\clA^*\times\opid{(\clQ\times\clQ)} \ar@{->}[d]_{\clA^*\times\opid\boxtimes} \ar@{->}[ru] &  &  \\
	\clA^*\times\opid\clQ \ar@/_2pc/@{->}[rruu]_-t &  &
	}\]
is the identity 2-cell of \(t\) (and in turn, a similar diagram expresses the other unit law \(\mu\tranComp (\eta * t) = \id_t\)).

\medskip
Packaging all together (perhaps unsurprisingly), we obtain the following result.
\begin{theorem}
	\label{monad_in_TwoTDX}
	A monad in \(\DTDX\) consists of a promonad on the category \(\clA^*\), so that each \(t(\una,q,q',\una')\) specifies a set of \emph{heteromorphisms}, compatible with the morphisms in \(\clA^*\) (=finite tuples of morphisms in \(\clA\)); furthermore, this promonad/category is \emph{graded} over the state category \(\opid{\clQ}\) (equipped with the pointwise monoidal structure) in the sense that composition of something `of degree \((x,y)\)', together with something `of degree \((x',y')\)' is of degree \((x\boxtimes x',y\boxtimes y')\), in the sense specified by \eqref{graded_composizia}.
\end{theorem}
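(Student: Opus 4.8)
The plan is to read off the asserted structure directly from the data collected in \autoref{whatsa_monad}, exploiting the fact that every cell in \(\DTDX\) has a \emph{tight} part (a functor between state categories) and a \emph{loose} part (a natural transformation of profunctors), so that the monad axioms split into two independent bundles of conditions. The tight parts of the unit and of the multiplication are exactly \(\jmath : \clI \to \clQ\), picking out an object \(I := \jmath(*)\), and \(\boxtimes : \clQ\times\clQ\to\clQ\); the loose parts are the natural transformations \(\eta\) and \(\mu\) displayed in \eqref{etas} and \eqref{mus}. First I would establish that \((\clQ,\boxtimes,I)\) is a monoidal category, and then that the loose data turn \(t\) into a graded promonad on \(\clA^*\), the grading being recorded by the degree \((x,y)\in\opid\clQ\) carried by each heteromorphism.

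For the monoidal structure, I would extract the tight component of the associativity axiom of the monad. Since horizontal composition of \twoDash transducers multiplies state categories by the Cartesian product in \(\Cat\) (the composite of \((\clQ,s)\) and \((\clP,t)\) has state category \(\clP\times\clQ\)), the associator of \(\DTDX\) restricts on states to the canonical associativity isomorphism of \(\times\). Thus the tight part of the monad associativity reads \(\boxtimes\circ(\boxtimes\times\id)\cong\boxtimes\circ(\id\times\boxtimes)\) compatibly with that canonical constraint, which is precisely an associator for \(\boxtimes\); likewise the tight parts of the two unit axioms furnish the left and right unitors \(I\boxtimes(\blank)\cong(\blank)\cong(\blank)\boxtimes I\). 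The pentagon and triangle for \((\clQ,\boxtimes,I)\) are then inherited for free from the coherence of the Cartesian monoidal structure on \(\Cat\), together with the fact that the monad axioms are \emph{equalities} of cells; no new coherence has to be verified by hand.

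For the promonad, I would invoke characterization \ref{tdx_char_3} of \autoref{remark_many_versions_of_a_category}, under which \(t\) is a profunctor \(\clA^*\pto\clA^*\) valued in the monoidal category \(\Set^{\opid\clQ}\) of endoprofunctors on \(\clQ\). The unit \(\eta\) embeds the hom-profunctor \(\hom_{\clA^*}\) into the degree-\((I,I)\) part of \(t\), which is exactly a promonad unit; the multiplication \(\mu\), unwound as in \eqref{mus}, is the coend-composition of heteromorphisms displayed in \eqref{graded_composizia}, combining a degree-\((x,y)\) element with a degree-\((x',y')\) element into a degree-\((x\boxtimes x',y\boxtimes y')\) one. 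The loose parts of the monad's associativity and unitality axioms are then literally the commuting associativity diagram displayed just before the statement and its unit analogue; these say that the graded composition is associative and unital, i.e.\@ that the elements of \(t(\una,q,q',\una')\) are the heteromorphisms of a category on the objects of \(\clA^*\), graded over \(\opid\clQ\) with its componentwise monoidal structure. Compatibility with the morphisms of \(\clA^*\) is just functoriality of \(t\) in its first and third variables.

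The step I expect to be the main obstacle is the careful decomposition of the double-categorical axioms into their tight and loose components, and in particular checking that the associator and unitors of \(\DTDX\) induce on the state categories exactly the canonical Cartesian-product constraints, so that one obtains a genuine monoidal structure with coherence rather than a merely associative functor \(\boxtimes\). A secondary subtlety is matching the coend appearing in the composite \(t\tranComp t\) (which runs simultaneously over intermediate words \(\una\in\clA^*\) and over the product state category \(\clQ\times\clQ\)) with the cowedge of \eqref{graded_composizia}, so that the naturality of \(\mu\) delivers a well-defined, dinatural composition of heteromorphisms. Both are bookkeeping rather than conceptual difficulties, but the mixed variance of the grading category \(\opid\clQ=\clQ^\op\times\clQ\), whose first factor carries \(\boxtimes^\op\), warrants some attention.
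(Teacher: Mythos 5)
Your proposal is correct and follows essentially the same route as the paper: both unwind the monad data of \autoref{whatsa_monad} into the tight components \((\jmath,\boxtimes)\), which equip \(\clQ\) with its monoidal structure, and the loose components \((\eta,\mu)\), which---rewritten in components as the cowedge \eqref{graded_composizia}---give the graded promonad structure on \(\clA^*\), with the monad axioms supplying exactly the displayed associativity and unit diagrams. Your extra framing of the loose part via characterization \ref{tdx_char_3} and your explicit attention to the coherence of \(\boxtimes\) are refinements of, not departures from, the paper's argument.
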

Regarded as a functor \(\clA^*\to\freerig\clA\emdash\Prof(\clQ,\clQ)\), the transducer \(t\) is strong monoidal. This means, essentially, that the free monoidal structure on \(\clA^*\) is compatible with the heteromorphisms added by the promonad \(t\).
\subsubsection{Adjunctions}
Characterization \ref{tdx_char_6} of \(\TwoTDX(\clA,\clB)\) in \autoref{remark_many_versions_of_a_category} allows for the definition of adjunctions in \(\TwoTDX\) as a special case of the definition of adjunctions in a category of a specific form; the following definition comes from \cite[Example 5]{Gaboardi2021}, of which this is the (2-categorical) verbatim dual. Recall that a \emph{\(\clM\)-graded comonad} on a category \(\clC\), consists of an oplax functor \(K : \Sigma\clM \to \Cat\), if \(\clM\) is a monoidal category regarded as a single-object bicategory. The image under \(K\) of the unique object in \(\Sigma\clM\) selects a category \(\clC\), and to every object of \(\clM\) is associated a functor \(K(M,-) : \clC\to\clC\), satisfying appropriate conditions that express its colaxity.  
To each such comonad one can associate a `locally graded' version of the coKleisli construction.

\begin{definition}
	\label{locally_graded_coKleisli}
	The \emph{coKleisli \(\clM\)-graded category} \(\coKl^\clM(K)\) of a graded comonad \(K\) as above has
	\begin{itemize}
		\item objects the same of \(\clC\), denoted \(X,Y,Z,\dots\);
		\item morphisms \(X\pto Y\) the pairs \((M,\phi)\) where \(M : \clM\) is an object and \(\phi : K(M,X)\to Y\) is a morphism in \(\clC\);
		\item composition of \((M,\phi) : X\pto Y\) with \((N,\psi) : Y\pto Z\) given by the formula
		      \[\vxy{
			      K(N\otimes M,X) \ar[r] & K(N,K(M,X)) \ar[r]^-{KN\phi} & K(N,Y) \ar[r]^-\psi & Z
			      }\]
		\item identity morphisms are given by part of the oplax structure of \(K\), \ie by the unitor \(\epsilon_X : K(1,X)\to X\).
	\end{itemize}
	Using the oplax structure of \(K\) one can prove that composition is associative and unital.
\end{definition}
\begin{remark}
	In our case of interest, the graded comonad is the 2-functor \(K_\clQ\) sending \(\clC\) to \(\clC\mapsto\clC\times\opid{\clQ}\), which can be seen as restriction
	\[\vxy[@R=-1mm]{
			\Cat\times\Cat \ar[r] & \Cat\times\Cat \ar[r] & \Cat\\
			(\clQ,\clC) \ar@{|->}[r] & (\opid\clQ,\clC) \ar@{|->}[r] & \clC\times\opid{\clQ}
		}\]
	of the `writer comonad' functor \(\clX\times-\) to categories of type \(\opid\clQ\).\footnote{The operation \(\clQ\mapsto \opid\clQ\) is itself a comonad, the `opid' comonad, whose existence is exploited --in a restricted form apt to capture dagger categories as coalgebras-- in \cite[2.1.10]{karvonen2019waydagger}.} From which we obtain the locally graded coKleisli category having
	\begin{itemize}
		\item objects the categories \(\clA,\clB,\clC\), etc.;
		\item morphisms \((\clQ,f) : \clA\pto\clB\) the pairs where \(\clQ\) is a category	and \(f : \clA\times\opid\clQ\to\clB\) is a functor;
		\item composition of \(\clA\mathrel{\overset{(\clQ,f)}\pto}\clB\mathrel{\overset{(\clP,g)}\pto}\clC\) given by \((\clQ\times\clP,g\bullet f)\) where	\(g\bullet f\) is the composite
		      \[\vxy[@R=3mm@C=1.5cm]{
			      \clA\times\opid{(\clQ\times\clP)}\ar@{=}[d]\\
			      (\clA\times\opid\clQ)\times(\opid\clP) \ar[r]^-{f\times\opid\clP}& \clB\times\opid\clP \ar[dd]^g\\ & \\ & \clC
			      }\]
		\item identity given by the unitor \((\clI,\lambda_\clA : \clA\times\clI \to\clA)\).
	\end{itemize}
	This can evidently be identified with the category \(\TwoTDX(\clA,\clB)\) of \ref{tdx_char_6} in \autoref{remark_many_versions_of_a_category}.
\end{remark}
Now, we need a technical lemma that says that there are very few adjunctions in the coKleisli graded 2-category of the writer comonad.
\begin{lemma}
	\label{adjunction_in_coKl}
	Let \((\clX,f) : \clX\times\clA\to\clB\) and \((\clY,g) : \clY\times\clB\to\clA\) be two adjoint 1-cells in the graded coKleisli 2-category \(\coKl^\Cat(W)\), if \(W(\clX,\clA):=\clX\times\clA\); then it is necessary that \(\clX=\clY=\clI=\{\bullet\}\) be the terminal category. Ultimately, an adjunction in \(\coKl^\Cat(W)\) boils down to a single adjunction \(f_\bullet:\clA\leftrightarrows\clB:g_\bullet\) in \(\Cat\), associated to the only object of \(\clI\).
\end{lemma}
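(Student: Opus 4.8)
The plan is to reduce the triangle identities to statements about the \emph{underlying grading functors} in $\Cat$, where a cancellation argument forces the grades to be terminal.

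First I would unwind the data. A $1$-cell $\clA\pto\clB$ in $\coKl^\Cat(W)$ is a pair $(\clX,f)$ with $f\colon\clX\times\clA\to\clB$; the identity at $\clA$ is $(\clI,\lambda_\clA)$, and composition multiplies grades, so $R\circ L=(\clY\times\clX,\,(y,x,a)\mapsto g(y,f(x,a)))$ and $L\circ R=(\clX\times\clY,\,(x,y,b)\mapsto f(x,g(y,b)))$. A $2$-cell $(\clX,f)\To(\clX',f')$ is a pair $(E,\theta)$ consisting of a functor $E\colon\clX\to\clX'$ together with a natural transformation $\theta\colon f\To f'\circ(E\times\id_\clA)$; vertical composition composes the $E$'s. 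In particular the unit $\eta\colon\id_\clA\To R\circ L$ has an underlying functor $E\colon\clI\to\clY\times\clX$ selecting a pair $(y_0,x_0)$, while the counit $\epsilon\colon L\circ R\To\id_\clB$ has underneath (necessarily) the unique functor $!\colon\clX\times\clY\to\clI$.

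The key step is to apply the underlying-grade assignment $(E,\theta)\mapsto E$ to the first triangle identity $(\epsilon\ast L)\cdot(L\ast\eta)=\id_L$. Tracking how whiskering acts on grades: $L\ast\eta$ has underlying functor $\phi\colon\clX\to\clX\times\clY\times\clX$, $x\mapsto(x,y_0,x_0)$, since the outer copy of $L$ is left untouched while $\eta$ inserts its chosen objects into the $\clY\times\clX$ coming from $R\circ L$; and $\epsilon\ast L$ has underlying functor $\psi\colon\clX\times\clY\times\clX\to\clX$, $(x',y,x)\mapsto x$, since $!$ collapses the $\clX\times\clY$ factor coming from $L\circ R$ and the inner copy of $L$ survives. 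Hence the triangle identity forces $\psi\circ\phi=\id_\clX$; but $\psi\circ\phi$ is the constant functor $x\mapsto x_0$, so $\mathrm{const}_{x_0}=\id_\clX$. This can hold only if $\clX$ has exactly one object $x_0$ and one morphism, i.e.\ $\clX=\clI$. The second triangle identity $(R\ast\epsilon)\cdot(\eta\ast R)=\id_R$ is entirely symmetric and forces, by the same bookkeeping, $\mathrm{const}_{y_0}=\id_\clY$, whence $\clY=\clI$.

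Once $\clX=\clY=\clI$ the grading is trivial: $L=(\clI,f)$ and $R=(\clI,g)$ are just functors $f_\bullet\colon\clA\to\clB$ and $g_\bullet\colon\clB\to\clA$, the underlying functors of $\eta,\epsilon$ are the unique ones into and out of $\clI$, and the natural-transformation parts reduce to $\eta^\flat\colon\id_\clA\To g_\bullet f_\bullet$ and $\epsilon^\flat\colon f_\bullet g_\bullet\To\id_\clB$. The triangle identities, now carrying no grade content, become verbatim the ordinary triangle identities, so the data is exactly an adjunction $f_\bullet\dashv g_\bullet$ in $\Cat$. The step I expect to be delicate is the whiskering bookkeeping on grades --- getting the order of the three factors in $\clX\times\clY\times\clX$ right and confirming that one branch inserts a constant (from $\eta$) while the other projects away two factors (from $\epsilon$) --- since this is precisely what produces the fatal collision $\mathrm{const}_{x_0}=\id_\clX$; everything else is routine.
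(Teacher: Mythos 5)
Your proof is correct and follows essentially the same route as the paper's: both arguments project the triangle identities onto the underlying grade functors, observe that the composite $\clX\to\clX\times\clY\times\clX\to\clX$ is the constant functor at $x_0$ (and symmetrically for $\clY$), conclude $\clX=\clY=\clI$ from $\mathrm{const}_{x_0}=\id_\clX$, and then read off the residual ordinary adjunction $f_{x_0}\dashv g_{y_0}$ in $\Cat$. The only cosmetic difference is that the paper phrases the counit as a cocone inducing a map out of $\colim_{xy}f_x(g_yb)$ before the collapse to $\clI$, which your version renders unnecessary by establishing terminality first.
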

\begin{proof}
	Uncurrying \(f\) and \(g\) we obtain maps
	\[\vxy{
			\clX\ar[r]^-f & [\clA,\clB] & f : \clY\ar[r]^-g & [\clB,\clA]
		}\]
	each of which induces a functor \(f_x : \clA\to\clB\)	and \(g_y : \clB\to\clA\) for every object \(x : \clX\) and \(y : \clY\). Unit and counit	of the adjunction are given by the natural transformations filling the diagrams
	\[\vxy[@R=5mm]{
		\clX\times\clY \drrtwocell<\omit>{<8>\epsilon}\ar@{->}[rd]^{f\times g} \ar@{->}[ddd]_{\boldsymbol !} &  &  & \clI \drrtwocell<\omit>{<5>\eta}\ar@{->}[rrd]^-{[\id_\clA]} \ar@{->}[ddd]_{(y_0,x_0)} &  &  \\
		& [\clA,\clB]\times[\clB,\clA] \ar@{->}[rd]^-\circ &  &  &  &  [\clA,\clA] \\
		&  & [\clB,\clB]&  & [\clB,\clA]\times[\clA,\clB] \ar@{->}[ru]_-\circ &  \\
		\clI \ar@{->}[rru]_-{[\id_\clB]} &  &  & \clY\times\clX \ar@{->}[ru]_-{g\times f} &  &
		}\]
	so that there is a natural transformation
	\[\vxy{\eta_a^{y_0x_0} : a \ar[r] & g_{y_0}(f_{x_0}a)}\]
	associated to the parameters \((y_0,x_0) : \clY\times\clX\) and natural in \(a :  \clA\), and a counit
	\[\vxy{\epsilon_b^{xy} : f_x(g_yb) \ar[r]& b}\]
	which is a cocone in \((x,y) : \clX\times\clY\) and thus induces a unique \(\bar\epsilon	: \colim_{xy}f_x(g_yb)\to b\). Now, it's easy to see that the triangle identities assert that
	\begin{itemize}
		\item the composition \(\xymatrix{ f_xa \ar[r] & f_x(g_{y_0}(f_{x_0}a)) \ar[r] & f_{x_0}a }\) is equal to the identity of \(f_xa\), and the composition
		      \[\vxy[@R=0cm]{
				      \clX \ar[r] & \clX\times(\clY\times\clX)  \ar[r] & \clX\\
				      x\ar@{|->}[r]\ar@(ul,dl) & (x,y_0,x_0) \ar@{|->}[r] & x_0\ar@(ur,dr)^{\id}
			      }\]
		      must be the identity of \(\clX\) (recall that a 2-cell is a pair made of a functor and a natural transformation filling a triangle); thus there is only one object \(x_0\) in \(\clX\), and only one identity arrow.
		\item Similarly, the composition \(\xymatrix{ g_yb \ar[r] & g_{y_0}(f_{x_0}(g_yb)) \ar[r] & g_{y_0}b }\) is equal to the identity of \(g_{y_0}b\), and the composition
		      \[\vxy[@R=0cm]{
				      \clY \ar[r] & (\clY\times \clX)\times\clY  \ar[r] & \clY\\
				      y\ar@{|->}[r]\ar@(ul,dl) & (x_0,y_0,y) \ar@{|->}[r] & y_0\ar@(ur,dr)^{\id}
			      }\]
		      must be the identity of \(\clY\).
	\end{itemize}
	But then, the same triangle identities imply that there is an adjunction \(f_{x_0}\dashv g_{y_0}\), with unit \(\eta^{x_0y_0}\) and counit the map \(\epsilon_0 : f_{x_0}(g_{y_0}b)\xto{\text{in}_{(x_0,y_0)}} \colim_{xy}f_x(g_yb)\to b\). This concludes the proof	of the lemma.
\end{proof}
\begin{corollary}\label{very_few}
	Adjunctions in \(\TwoTDX\) are very few. Unpacking the definition of adjunction valid in any 2-category, a pair of adjoint transducers consists of
	\begin{itemize}
		\item the left adjoint transducer \((\clX,f) : \clA\tdto\clB\), and the right adjoint transducer \((\clY,g) : \clB\tdto\clA\);
		\item the unit 2-cell
		      \[\vxy{(H,\eta) : (\clI,\iota_\clA) \ar@{=>}[r] & (\clY\times\clX,g\circ f)}\]
		      consisting of a functor \(H:\clI\to\clX\times\clY\) and a natural transformation \(\eta\), having the effect of picking an object \((x_0,y_0) :  (\clX\times\clY)_0\), and yielding maps (natural in \(\una,\una'\))
		      \[\eta_{aa'} : \clA^*(\una',\una) \to G(f(\una,x_0,x_0),y_0,y_0)[\una'].\]
	\end{itemize}
	The unit map, together with the counit, provide an adjunction \(f_{x_0x_0} = f(-,x_0,x_0)\dashv g(-,y_0,y_0)=g_{y_0y_0}\).
\end{corollary}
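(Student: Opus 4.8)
The plan is to obtain the statement directly from \autoref{adjunction_in_coKl} by transporting it along the identification, recorded as characterization \ref{tdx_char_6} in \autoref{remark_many_versions_of_a_category}, of each hom-category \(\TwoTDX(\clA,\clB)\) with the graded coKleisli category of the writer comonad \(K_\clQ\) lifted to profunctors. Under this identification the notion of adjunction in the \(2\)-category \(\TwoTDX\) coincides with the notion of adjunction in the graded coKleisli \(2\)-category \(\coKl^\Cat(W)\) analysed in the lemma, so I expect the corollary to be essentially a matter of rewriting the lemma's conclusion in explicit transducer language, with no new conceptual input required.

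First I would unpack the definition of adjunction valid in any \(2\)-category: it consists of a left adjoint \(1\)-cell \((\clX,f):\clA\tdto\clB\), a right adjoint \((\clY,g):\clB\tdto\clA\), a unit \(2\)-cell \((H,\eta):(\clI,\iota_\clA)\To(\clY\times\clX,g\circ f)\) and a counit \(2\)-cell \((K,\epsilon):(\clX\times\clY,f\circ g)\To(\clI,\iota_\clB)\), subject to the two triangle identities. The key structural observation is that the functor part \(H\) has for its domain the state category \(\clI\) of the identity transducer \(\iota_\clA\), so \(H\) necessarily selects a single object \((x_0,y_0)=H(\bullet)\) of \(\clX\times\clY\); dually, \(K\) selects the matching objects. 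Using the right unitor isomorphism computed after \autoref{definition_the_bicategory_bitwotran}, which identifies \(t\circ\iota\) with \(t\) through the Yoneda reduction \(\int^{\una'}\clA^*(\una',\una)\times(-)\cong(-)\), the \(2\)-cell \(\eta\) becomes precisely the family \(\eta_{aa'}:\clA^*(\una',\una)\to G(f(\una,x_0,x_0),y_0,y_0)[\una']\), where \(G=\text{LY}(g)\) is the two-step extension of \(g\) from \autoref{twostep_extension} used in the composition formula \eqref{compozia}.

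With the data in this shape I would invoke \autoref{adjunction_in_coKl}: its proof shows that the two triangle identities force the functor components of unit and counit to be constant at a single object and the remaining \(2\)-cell data to assemble into an ordinary adjunction. Transported to \(\TwoTDX\), this says that all grading carried by \(\clX\) and \(\clY\) collapses onto the selected pair \((x_0,y_0)\), and that the surviving content is exactly the \(\Cat\)-adjunction \(f_{x_0x_0}=f(-,x_0,x_0)\dashv g(-,y_0,y_0)=g_{y_0y_0}\), with unit \(\eta^{x_0y_0}\) and counit obtained from \(\epsilon\) by precomposing with the coprojection into \(\colim_{xy}f_x(g_yb)\), just as in the lemma.

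The one step that requires genuine care — and which I expect to be the main obstacle — is matching the transducer-level triangle identities with the abstract ones manipulated in the lemma. Concretely, I must verify that the whiskerings appearing in the triangle identities of \(\TwoTDX\), computed through \(\text{LY}\) and the coend formula \eqref{compozia}, reduce after the unitor simplifications to the plain whiskerings of functors and natural transformations on which the lemma operates; once this bookkeeping is settled the concentration at \((x_0,y_0)\) and the identification with an ordinary adjunction follow formally. I would close by noting that the passage is evidently reversible, so that conversely every \(\Cat\)-adjunction \(f_{x_0x_0}\dashv g_{y_0y_0}\), together with a choice of objects \(x_0,y_0\), yields an adjunction of transducers; hence adjunctions in \(\TwoTDX\) are no richer than adjunctions in \(\Cat\), which is the content of the statement.
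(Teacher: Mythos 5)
Your proposal is correct and follows essentially the same route as the paper: the corollary is obtained by identifying \(\TwoTDX(\clA,\clB)\) with the graded coKleisli category of \ref{tdx_char_6}, invoking \autoref{adjunction_in_coKl} so that the triangle identities force the unit's functor component \(H:\clI\to\clX\times\clY\) to concentrate everything at a single pair \((x_0,y_0)\), and then reading off the residual adjunction \(f_{x_0x_0}\dashv g_{y_0y_0}\) after the unitor/Yoneda reductions. The bookkeeping step you flag --- transporting the lemma, proved for functor-valued coKleisli cells, to the profunctor-valued transducer setting via \(\text{LY}\) and \eqref{compozia} --- is exactly what the paper leaves implicit, so identifying it explicitly is consistent with (indeed slightly more careful than) the paper's own treatment.
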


\section{\(\TwoTDX\) and other bicategories of automata}\label{sec_relations-automata}
The scope of this section is to compare \(\TwoTDX\) with other `bicategories of processes', most importantly the bicategory of \emph{circuits} studied by Walters et al.\@\xspace in \cite{Katis1997} and the bicategory \(\MAC\) of \emph{machines} described by Guitart \cite{guitart1974remarques}.
\subsection{\(\TwoTDX\) and bicategories of circuits}
We start by recalling the definition of the bicategory \(\Mly(\clK)\) of \emph{Mealy automata} valued in a Cartesian category \(\clK\).\footnote{We will be particularly interested in two cases: \(\clK\) equal to the category of sets and functions, and the 1-category of categories and functors; generalizing this picture is possible (the special case of a topos as Cartesian closed category was studied in \cite{hora2024topoiautomataitopoi}, and \cite{guitart1974remarques} takes into account some features of \(\Cat\) qua 2-category, not just 1-category.) but out of the scope of the present section, \ie building a comparison functor \(\Mly(\clK) \to\TDX\) and \(\Mly(\Cat)\to\TwoTDX\).} The construction that follows comes from \cite{Katis2010,Katis1997}.

Let \(\bbN\) denote the one-object, 2-discrete bicategory on the graph with one vertex and one edge (it's `the monoidal category of natural numbers': hence the name). For every bicategory \(\clB\), define \(\Omega\clB\) as the bicategory of pseudofunctors \(A : \bbN\to \clB\), lax natural transformations between such functors, and modifications between such lax natural transformations. Let \(\Sigma\clK\) denote the monoidal category \((\clK,\times,1)\), regarded as a single-object bicategory.
\begin{definition}[The bicategory of Mealy automata]\label{def:mly}
	The \emph{bicategory of Mealy automata}, denoted as \(\Mly(\clK)\), is defined as \(\Omega(\Sigma\clK)\).
\end{definition}
Explicitly,
\begin{enumtag}{ml}
	\item\label{ml_1} an object of \(\Mly(\clK)\) consists of a mapping sending the unique object of \(\bbN\) to the unique object \(\bullet\) of \(\Sigma\clK\); the unique morphism generator of \(\bbN\) then picks an object \(A : \clK\), together with all its iterates \(A^{\times n}\) (image of \(n : \bbN\)).
	\item\label{ml_2} A 1-cell of \(\Mly(\clK)\) is a \emph{Mealy automaton}, \ie a tuple \((E,d,s)\) where \(E : \clK\) is an object of `states' (witnessing the unique component of a lax natural transformation between two pseudofunctors \(A,B : \bbN \to\Sigma\clK\)), and a laxity cell \(\langle d,s\rangle : A\times X \to X\times B\), or rather a span
	\[\label{generic_mealy}\vxy{X & A\times X \ar[r]^-s \ar[l]_-d & B.}\]
	Note how \(\Omega(\Sigma(\clK,\otimes,I))\) makes sense for every monoidal category, but this span representation exists only if \(\clK\) is Cartesian.
	\item\label{ml_3} A 2-cell \(f : (X,d,s)\To (Y,d',s')\), \ie a modification, consists of a map between state spaces, \(f : X\to Y\) such that the following diagram commutes:
	\[\label{generic_mealy_map}\vxy{X \ar[d]_f & A\times X \ar[r]^-s \ar[l]_-d\ar[d]^{A\times f} & B\ar@{=}[d]\\
		Y & A\times Y \ar[r]^-{s'} \ar[l]_-{d'} & B.}\]
\end{enumtag}
Composition of spans like
\[\vxy{X & A\times X \ar[r]^-s \ar[l]_-d & B & B\times Y \ar[r]^-{s'} \ar[l]_-{d'}& C}\]
consists of the pasting of lax natural transformations, which when destructured appears as a composite span
\[\vxy{X \times Y & A\times (X \times Y)\ar[r]^-{s'\diamond s} \ar[l]_-{d'\diamond d} & C}\]
where \(d'\diamond d\) and \(s'\diamond s\) are given respectively by (given in the category of sets for convenience, but it's easy to lambda-abstract the definition to work in any Cartesian \(\clK\))
\[\begin{cases}
		d'\diamond d (a, (x,y)) = \big(d(\una,x), d'(s(\una,x),y)\big), \\
		s'\diamond s (a, (x,y)) = s'(s(\una,x),y).
	\end{cases}\]
\begin{remark}\label{remark_mly_universal_properties}
	Fixed two objects \(A,B : \clK\), the hom-category \(\Mly(A,B)\) enjoys several universal properties:
	\begin{enumtag}{mp}
		\item \label{mly_up_1} if \(\clK\) is monoidal closed, the hom-category \(\Mly(A,B)\) is isomorphic the category of coalgebras for the endofunctor
		\[\vxy[@R=0cm]{
				R_{AB}:\clK \ar[r]& \clK\\
				X\ar@{|->}[r] & [A,B\times X].
			}\]
		As a consequence, there exists an inserter diagram
		\[\vxy{
			\Mly(A,B) \drtwocell<\omit>{}\ar[r]\ar[d] & \clK \ar@{=}[d]\\
			\clK \ar[r]_{R_{AB}}& \clK.
			}\]
		\item \label{mly_up_2} the hom-category \(\Mly(A,B)\) fits in the strict 2-pullback
		\[\vxy{
			\Mly(A,B) \xpb\ar[r]\ar[d] & (A\times\blank/B) \ar[d]^U\\
			\Alg(A\times\blank) \ar[r]_-{U'}& \clK,
			}\]
		where \(\Alg(A\times\blank)\) is the category of endofunctor algebras for \(A\times- : \clK\to\clK\), and \((A\times-/B)\) the comma category of arrows \(A\times X\to B\) and \(U,U'\) are forgetful functors.
	\end{enumtag}
	Similarly to what happens with the numerous characterizations of \autoref{remark_many_versions_of_a_category}, it is sometimes more convenient, depending on the kind of problem at hand, to argue with the bare definition of \(\Mly(A,B)\), or with one of the characterizations \ref{mly_up_1} or \ref{mly_up_2} above.
\end{remark}
Now, our purpose in the present section is to build a comparison functor
\[\vxy{
		\bbG : \Mly(\Cat) \ar[r]& \TwoTDX
	}\]
thus specializing the construction of \(\Mly(\clK)\) to the case	\(\clK=\Cat\), and then to the case \(\clK=\Set\) to obtain a functor
\[\label{baumiao}\vxy{
		G : \Mly \ar[r]& \TDX
	}\]
from the bicategory \(\Mly = \Mly(\Set)\), to the bicategory of 1-transducers of \autoref{bic_of_1_tnd}.
First of all, we define \(\bbG\) to be the identity on objects; then, we observe that any Mealy automaton \((\clX,d,s) : \clA\to\clB\) between categories induces a functor
\[\vxy{D : \clA^*\times\clX \ar[r] & \clX \times\clB^*}\]
in the following way:
\begin{itemize}
	\item the functor \(d : \clA^*\times\clX \to\clX\) clearly has an extension to \(d^* : \clA^*\times\clX \to\clX\), given the universal	property of the \(\clA^*\);
	\item the functor \(s : \clX\times\clB^* \to\clB\) has an extension to \(s^* : \clA^*\times\clX \to\clB^*\), defined `inductively' (\cf \cite[Lemma 2.13]{EPTCS397.1}) as
	      \[
		      \begin{cases}
			      s^\flat(e, \emptyList) & = \emptyList                          \\
			      s^\flat(e, a \kons as) & = s(e, a) \kons s^\flat(d(e, a), as).
		      \end{cases}
	      \]
\end{itemize}
Now, let \(D:=\langle d^*,s^\flat\rangle\), and note that the representable profunctor \(\hom(1,D) : \clA\times\clX\tdto\clX\times\clB\) defined by \(D\) and given by
\[\big((x',\unb),(x,\una)\big)\mapsto\clX(x',d(\una,x))\times\clB^*(b,s^\flat(\una,x))\]
has precisely the correct type for being a \twoDash transducer \((\clX,\hom(1,D)) : \clA\tdto\clB\). This defined \(\bbG\) on 1-cells, so that it is a (pseudo)functor: it preserves identities, as the Mealy automaton
\[\vxy{
		\clA^*\times\clI\ar[r]^-\cong & \clA^* \ar@{=}[r] & \clA^* \ar[r]^-\cong & \clI\times\clA^*
	}\]
is sent to the transducer defined as
\begin{align*}
	\big((\bullet,\una'),(\una,\bullet)\big) & \mapsto \clI(\bullet, d^*(\una,\bullet))\times\clA^*(\una',s^\flat(\una,\bullet)) \\
	                                         & \cong I\times\clA^*(\una',\una)                                                   \\
	                                         & \cong \clA^*(\una',\una)
\end{align*}
It preserves compositions, as given transducers
\[\vxy[@C=1.5cm]{\clA \ar[r]^-{\bbG(\clX,d_1,s_1)} & \clB \ar[r]^-{\bbG(\clY,d_2,s_2)} & \clC}\]
their composition \emph{qua} transducers is given by
\begin{multline}
	\big((\una,(x,y)),((x',y'),\unc)\big) \longmapsto \\ \int^{\unb} \clX\big(x', d_1^*(\una,x)\big)\times\clB^*\big(b,s_1^\flat(\una,x)\big)\times\clY\big(y',d_2^*(\unb,y)\big)\times \clC^*\big(\unc,s_2^\flat(\unb,y)\big)\end{multline}
which can be reduced by a straightforward calculation to
\begin{align*}
	\kern1mm       & \bbG(\clY,d_2,s_2)\tranComp\bbG(\clX,d_1,s_1)                                                                                                                        \\
	\cong \kern1mm & \clX\big(x', d_1^*(\una,x)\big)\times\int^{\unb} \clB^*\big(b,s_1^\flat(\una,x)\big)\times\clY\big(y',d_2^*(\unb,y)\big)\times\clC^*\big(\unc,s_2^\flat(\unb,y)\big) \\
	\cong \kern1mm & \clX\big(x', d_1^*(\una,x)\big)\times \clY\big(y',d_2^*(s_1^\flat(\una,x),y)\big)\times\clC^*\big(\unc,s_2^\flat(s_1^\flat(\una,x),y)\big)                           \\
	\cong \kern1mm & (\clX\times\clY)\big((x',y'),(d_1^*(\una,x),d_2^*(s_1^\flat(\una,x),y))\big)\times \clC^*\big(\unc,s_2^\flat(\unb,y)\big)                                            \\
	=\kern1mm      & \bbG(\clX\times\clY,d_2\diamond d_1,s_2\diamond s_1)\big((\una,(x,y)),((x',y'),\unc)\big).
\end{align*}
On 2-cells, every functor \(F : \clX\to\clY\) such that the squares
\[\vxy{
	\clX\ar[d]_-F & \clA\times\clX \ar[r]^-{d_1}\ar[l]_-{s_1}\ar[d]^{\clA\times F}& \clB\ar@{=}[d]\\
	\clY & \clA\times\clY \ar[r]^-{d_2}\ar[l]_-{s_2}& \clB
	}\]
both commute is also that
\[\begin{cases}
		s_2^\flat(\una,Fx) = s_1^\flat(\una,x), \\
		F(d_1^*(\una,x)) = d_2^*(\una,Fx).
	\end{cases}\]
So, there is an induced 2-cell between the associated transducers, mediated by the functorial action of \(F\) and the embedding \(D\mapsto\hom(1,D)\): as a consequence there is a pseudo\-com\-mu\-ta\-ti\-ve square of profunctors
\[\vxy{
	\clA^*\times\clX \ar[r]|-@{|}\ar[d]|-@{|}& \clX\times\clB^*\ar[d]|-@{|}\\
	\clA^*\times\clY \ar[r]|-@{|}& \clY\times\clB^*
	}\]
whence a transformation with components induced by the action of \(F\),
\begin{align*}
	((x',\unb),(\una,x)) & \mapsto \bbG(D)((x',\unb),(\una,x))                            \\
	                     & =\clX(x',d_1(\una,x))\times\clB^*(\unb,s_1^\flat(\una,x))      \\
	                     & \to \clY(Fx',Fd_1(\una,x))\times\clB^*(\unb,s_1^\flat(\una,x)) \\
	                     & =\clY(Fx',d_2(\una,Fx))\times\clB^*(\unb,s_1^\flat(\una,Fx))   \\
	                     & =\bbG(D')((Fx',\unb),(\una,Fx))
\end{align*}
and filling the triangle
\[\vxy[@R=5mm]{
	\clA\times \opid\clX \drtwocell<\omit>{<3>\kern.5em\theta_F}
	\ar[dd]_-{\clA\times \opid F}\ar[dr]^-{\bbG(D)} & \\
	& \freerig{\clB} \\
	\clA\times \opid\clY. \ar[ur]_-{\bbG(D')}&
	}\]
Everything that has been done for \(\Set\)-enriched categories can be specialized to discrete ones, regarded as enriched over Booleans; this yields the 2-functor \(G\) of \eqref{baumiao}.
\begin{remark}\label{equippy}
	Summing up the results of this subsection, we have constructed a diagram
	\[\vxy{
			\Cat\ar[dr]_{\Pi}\ar[rr]^-J && \Mly(\Cat) \ar[dl]^-I \\
			& \TwoTDX
		}\]
	commutative up to iso, where the composite functor \(\Pi=IJ\) is a proarrow equipment, yet \(I\) and \(J\) separately are not proarrow equipments, since
	\begin{itemize}
		\item not every 1-cell in the image of \(J\) has a conjoint; in fact, only the companions of isomorphisms of categories admit a conjoint in \(\Mly(\Cat)\);
		\item not every 1-cell in the image of \(I\) has a conjoint: in order to have a conjoint it is necessary to have terminal category of states. But then, every 1-cell in \(\Mly(\Cat)\), in the image of \(J\), has a conjoint when mapped through \(I\).
	\end{itemize}
\end{remark}
\subsection{\(\TwoTDX\) and Guitart's \(\MAC\)}
The relation of \(\TwoTDX\) with another notable bicategory of `machines' is more subtle; let's start recalling the definition of the bicategory \(\MAC\) of `machines' that René Guitart gave in \cite[§2]{guitart1974remarques}. We are basically repeating an outline of Guitart's definition that appears in \cite{EPTCS397.1}.
\begin{definition}
	\(\MAC\) is the bicategory having
	\begin{itemize}
		\item objects the small categories \(\clA,\clB,\dots\);
		\item as hom-categories \(\Mac(\clA,\clB)\) the full subcategory of \(\Span(\Cat)(\clA,\clB)\) spanned by diagrams
		      \[\label{macinno}\vxy{
				      &\clE\ar[dr]^g\ar[dl]_q &\\
				      \clA&&\clB
			      }\]
		      where \(q\) is a discrete opfibration.
	\end{itemize}
\end{definition}
We informally refer to 1-cells of \(\MAC\) as `machines'.

Recall from \cite{Jay1988} that a \emph{local adjunction} between bicategories \(\clX,\clY\) consists of lax functors \(L:\clX \rightleftarrows\clY:R\) equipped with a family of adjunctions
\[\adja{\lambda_{XY}}{\clY(LX,Y)}{\clX(X,RY)}{\rho_{XY}}\]
By evident extension of this terminology, a \emph{local reflection} is a local adjunction where each \(\rho_{XY}\) is fully faithful. The following adapts a well-known fact about profunctors represented as spans.
\begin{lemma}
	There is a local reflection
	\[\adja{(\blank)^\varphi}{\MAC}{\Prof}{j}\]
	where the bicategory on the right hand side is profunctors, regarded as two-sided discrete fibrations, inside \(\Span(\Cat)\).
\end{lemma}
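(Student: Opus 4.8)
The plan is to transport, with as little change as possible, the classical argument realising profunctors---presented as two-sided discrete fibrations---as a locally reflective part of $\Span(\Cat)$, while checking at each step that the constraint defining $\MAC$, that the left leg of a span be a discrete opfibration, is respected. Concretely, I would take $j\colon\Prof\to\MAC$ to be the evident inclusion: in the present convention a two-sided discrete fibration $\clA\xot{d}\clE\xto{c}\clB$ has a discrete opfibration for its left leg, so the very same span is a machine, and $j$ is the identity on underlying span morphisms. For the left adjoint $(\blank)^\varphi\colon\MAC\to\Prof$ I would send a machine $\clA\xot{q}\clE\xto{g}\clB$ to the profunctor it \emph{represents}, $(a,b)\mapsto\int^{e}\clA(qe,a)\times\clB(b,ge)$; since the left leg $q$ is already a discrete opfibration, this amounts to replacing only the right leg $g$ by a discrete fibration through the comprehensive factorisation, the unit of which supplies a canonical machine morphism from the given span to (the span of) the resulting two-sided discrete fibration.

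Next I would assemble the hom-adjunctions $\lambda_{\clA\clB}\dashv\rho_{\clA\clB}$ from the universal property of the factorisation system $(\text{final},\text{discrete fibration})$ on $\Cat$. Orthogonality gives, for each machine $M$ and each two-sided discrete fibration $P$, a comparison between machine morphisms $M\to j(P)$ and profunctor morphisms $M^\varphi\to P$, natural in both arguments; the unit is the universal arrow $M\to j(M^\varphi)$ of the factorisation, and the counit $(jP)^\varphi\to P$ is invertible because $jP$ is already a two-sided discrete fibration. Promoting this comparison from $1$-cells to an adjunction of hom-categories is then a matter of checking the two triangle identities up to the coherence isomorphisms of $\MAC$ and $\Prof$, a routine coend manipulation.

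To upgrade the resulting local adjunction to a local \emph{reflection} I must verify the extra condition that every $\rho_{\clA\clB}$ be fully faithful. This is exactly where the \emph{local} full faithfulness of two-sided discrete fibrations inside $\Span(\Cat)$ enters: a morphism of spans between two such fibrations is nothing but a natural transformation of the profunctors they present. Restricting this fact along $\Prof\hookrightarrow\MAC$ and using the invertibility of the counit noted above yields the required full faithfulness, and it is precisely this feature that distinguishes a reflection from a bare local adjunction.

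The step I expect to be the main obstacle, and the only one calling for genuine \emph{adaptation} rather than citation, is the compatibility of $(\blank)^\varphi$ with \emph{horizontal composition}. Composition in $\MAC$, as in $\Span(\Cat)$, is computed by pullback, whereas composition in $\Prof$ is computed by coend, and one must check that the represented-profunctor construction carries the pullback composite of two machines to a category coherently isomorphic to the coend composite of their represented profunctors. This is the familiar principle that reflecting a pullback of spans recovers the coend; the new input is that a pullback of two spans whose left legs are discrete opfibrations again has a discrete opfibration for its left leg, so that $(\blank)^\varphi$ and $j$ are bona fide lax functors and $\lambda,\rho$ are pseudonatural in both variables. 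Once this coherence is in place, the axioms of a local adjunction in the sense of \cite{Jay1988} collapse to the triangle identities already checked, and the proof is complete.
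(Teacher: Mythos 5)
The paper offers no proof of this lemma at all---it is stated as an adaptation of ``a well-known fact about profunctors represented as spans,'' with the actual proof effort reserved for the subsequent theorem about \(\TwoTDX\)---so your argument has to stand on its own, and unfortunately its two load-bearing claims are false. The first is the claim that a two-sided discrete fibration has a discrete opfibration for its left leg, so that \(j\) is literally the inclusion of spans. In a two-sided discrete fibration the unique lifts along the left leg exist only among morphisms whose image under the \emph{right} leg is an identity; the left leg is not a one-sided discrete opfibration. Concretely, the identity profunctor of the walking arrow \(\clB=\{0\xto{u}1\}\) is presented by the span \(\clB\xot{\mathrm{cod}}\clB^{\rightarrow}\xto{\mathrm{dom}}\clB\), and \(\mathrm{cod}\) is not a discrete opfibration: the object \(\mathrm{id}_0\) has \emph{two} distinct morphisms over \(u\), namely the squares \(\mathrm{id}_0\to u\) and \(\mathrm{id}_0\to\mathrm{id}_1\). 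Even more starkly, a machine \(\clI\to\clB\) must have a \emph{discrete} apex (a functor \(\clE\to\clI\) is a discrete opfibration iff \(\clE\) has no non-identity arrows), while profunctors \(\clI\pto\clB\) are arbitrary presheaves, presented by their usually non-discrete categories of elements. So \(\Prof\) is not a sub-bicategory of \(\MAC\) in the way you assert, your \(j\) is not even well defined, and the unit and counit you build on top of it collapse.

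The second false claim is that, because the left leg is already a discrete opfibration, the reflection ``amounts to replacing only the right leg by a discrete fibration through the comprehensive factorisation''; the orthogonality argument for the hom-adjunctions inherits the same defect. Being a two-sided discrete fibration is a joint condition on the span, not a conjunction of one-sided conditions on its legs. Take \(\clB=\clI\): a machine \(\clA\xot{q}\mathrm{El}(F)\to\clI\), for any copresheaf \(F:\clA\to\Set\), \emph{is already} a two-sided discrete fibration (it presents \(F\) as a profunctor \(\clA\pto\clI\)), so its reflection must be itself; yet its right leg \(\mathrm{El}(F)\to\clI\) is not a discrete fibration unless \(\mathrm{El}(F)\) is discrete, and its comprehensive factorisation replaces \(\mathrm{El}(F)\) by \(\pi_0(\mathrm{El}(F))\), a category through which the left leg does not even factor---the recipe produces neither a span nor the correct answer. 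The correct mechanism is the one your opening paragraph states and then abandons: the reflector is the represented profunctor \((a,b)\mapsto\int^{e}\clA(qe,a)\times\clB(b,ge)\) followed by its \emph{two-sided} category of elements, with unit \(e\mapsto[(\mathrm{id}_{qe},e,\mathrm{id}_{ge})]\), and the hom-adjunction with fully faithful right adjoint comes from the genuinely two-sided fact that span maps from any span into a two-sided discrete fibration \(\clE_P\) are exactly the natural transformations from the represented profunctor into \(P\). What neither this nor your argument settles---and what the false inclusion claim conceals---is how \(j\) lands in \(\MAC\) at all, given that two-sided discrete fibrations are not machines; that is the one genuinely delicate point of the lemma (it depends on how laxly the \(2\)-cells of \(\MAC\) are taken, going back to Guitart), and it is precisely the point a correct proof would have to confront.
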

The purpose of the present subsection is to prove the following result.
\begin{theorem}
	There is a local reflection
	\[\adja{\Rbag\blank}{\TwoTDX}{\Prof}{\bsu\blank}\]
	induced by the span representation of profunctors.
\end{theorem}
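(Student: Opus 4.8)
The plan is to realise the statement as a graded, monoidal refinement of the preceding lemma, organised around the span representation of \ref{tdx_char_4}. For the right lax functor $\bsu:\Prof\to\TwoTDX$ I take the embedding of \autoref{natural_embed}, which is the identity on objects and sends a profunctor $p:\clA\pto\clB$ to the 2-transducer $(\clI,\bar p)$ concentrated in output words of length one over the terminal state category. On each hom-category this is fully faithful: a transducer transformation between two such cells is forced to consist of the unique functor $\clI\to\clI$ together with a natural family supported in length one, i.e. exactly a profunctor transformation $p\To q$. For the left lax functor $\Rbag:\TwoTDX\to\Prof$ I use the span $\collage{(\clA^*\times\clQ)}{t}{(\clQ\times\clB^*)}$ attached to $(\clQ,t)$ by \ref{tdx_char_4}, reflect its collage onto the two-sided discrete fibrations exactly as in the lemma, and then base-change back to generators; concretely this amounts to tracing out the state grading and evaluating the output at length one,
\[\Rbag(\clQ,t)(a,b)\;=\;\int^{q:\clQ} t(a,q,q,b\kons\emptyList).\]
A short coend computation gives $\Rbag\bsu\cong\id_{\Prof}$, so that $\bsu$ is, up to isomorphism, a section of $\Rbag$.

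The real content is the family of hom-adjunctions
\[\Prof(\Rbag\clA,\clB)=\Prof(\clA,\clB)\;\rightleftarrows\;\TwoTDX(\clA,\bsu\clB)=\TwoTDX(\clA,\clB),\]
which, following Jay, must be honest adjunctions of ordinary categories, lax-natural in $\clA,\clB$. It is important to note that this adjunction is \emph{not} the naive pair $(\bsu,\Rbag)$: a 2-transducer admits essentially no transducer transformation into an embedded profunctor unless its higher-word components already vanish, so $\bsu$ has no one-sided adjoint on the nose. Instead I would build the adjunction by composing the comprehensive-factorization reflection used in the lemma with the base-change adjunctions along the generator inclusions $\clA\hookrightarrow\clA^*$, $\clB\hookrightarrow\clB^*$ and with the state-trace. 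This exhibits $\Rbag$ as a left adjoint whose fully faithful right adjoint $\rho_{\clA\clB}:\Prof(\clA,\clB)\to\TwoTDX(\clA,\clB)$ is the \emph{cofree} 2-transducer on a profunctor, obtained from $\bsu$ by composing with the unit $1$-cell $\eta_\clA:\clA\tdto\clA$ of the local adjunction (equivalently, by right Kan extension along the generator inclusions followed by the cofree-over-states construction). Checking that the counit $\Rbag\rho_{\clA\clB}\To\id$ is invertible shows that $\rho_{\clA\clB}$ is fully faithful, which is precisely the condition upgrading the local adjunction to a local reflection.

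The hard part is the construction of the unit $\eta_\clA$—equivalently the cofree-over-states right adjoint to the state-trace—and the verification of lax naturality. In contrast with the ungraded situation of the lemma, $\Rbag$ collapses three independent gradings simultaneously, namely the input words of $\clA^*$, the output words of $\clB^*$, and the state category $\clQ$, and neither generator inclusion admits a monoidal retraction, so the reflection cannot be a strict truncation and the transpose bijections are genuinely Kan-theoretic. Establishing that these bijections are natural in $\clA$ and $\clB$, compatibly with the composition law \eqref{compozia} and the Laurent--Yoneda extension of \autoref{twostep_extension}, is where the bookkeeping concentrates; here the strong monoidality of $t$ (\autoref{strong_monoidality_2transducer}) is exactly what guarantees that tracing out states commutes with concatenation of words, so that the cofree construction is well defined and the unit and counit obey the triangle laws. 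With those coherences in hand, the remaining local-adjunction axioms reduce to a routine diagram chase, and the full faithfulness of $\rho_{\clA\clB}$ supplies the reflection qualifier.
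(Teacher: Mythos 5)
There are two concrete gaps here, both of which the paper's own proof avoids. First, your reflector lands in the wrong hom-category. The paper defines the hom-wise reflection between \(\TwoTDX(\clA,\clB)\) and \(\Prof(\clA,\clB^*)\): once the states are traced out, a transducer already \emph{is} a profunctor \(\clA\pto\clB^*\), so the right adjoint \(\bsu\) is simply ``regard a profunctor on \(\clB^*\) as a transducer with state category \(\clI\)'' --- no truncation at all, and the counit is invertible merely because the passage between profunctors and two-sided discrete fibrations is an equivalence. You instead truncate to words of length one so as to land in \(\Prof(\clA,\clB)\), using the embedding of \autoref{natural_embed}; as you yourself observe, that embedding (which places \(\varnothing\) at every other word length) cannot be right adjoint to any trace-like functor, since a transducer with nonempty higher components admits no map into it. Your proposed repair --- a ``cofree-over-states'' right adjoint built from right Kan extensions along the generator inclusions --- is precisely the content of the theorem, and it is never constructed: the proof stops exactly where it would have to begin. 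Had you kept \(\clB^*\) as the target, none of that machinery would be needed.

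Second, and independently, your explicit formula \(\Rbag(\clQ,t)(a,b)=\int^{q:\clQ}t(a,q,q,b\kons\emptyList)\) uses the diagonal coend, and this breaks the hom-bijection. A transducer transformation from \((\clQ,t)\) into a transducer with state category \(\clI\) consists of the unique functor \(\clQ\to\clI\) together with a family \(t(\una,q,q')(\unb)\to p(\una,\unb)\) natural in \((q,q')\) over all of \(\opid{\clQ}\); such families correspond to maps out of \(\colim_{\opid{\clQ}}t\), not out of the coend. Maps out of \(\int^q t(\blank,q,q)\) are mere cowedges, i.e.\ extranatural families on the diagonal, which is strictly weaker data, so with the coend the correspondence \(\Prof(\Rbag t,p)\cong\TwoTDX(t,\bsu p)\) fails. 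This is exactly why the paper takes \(\int t=\colim_{\opid\clQ}\big(t(\blank,q,q')\big)\) and insists ``not the coend'': the colimit cocone is what furnishes the unit \(\eta:(\clQ,t)\To\bsu\Rbag(\clQ,t)\) as an honest natural transformation, whereas the coend would only yield a cowedge. With both corrections made, the argument becomes far shorter than your sketch suggests: the unit is the colimit cocone, the counit is the isomorphism coming from the span representation, and no Kan-theoretic bookkeeping about collapsing ``three independent gradings'' is required.
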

\begin{proof}
	For every \(\clA,\clB : \Cat\) we have to define a reflection
	\[\adja{(\Rbag\blank)_{\clA\clB}}{\TwoTDX(\clA,\clB)}{\Prof(\clA,\clB^*)}{(\bsu\blank)_{\clA\clB}.}\]
	We define
	\begin{itemize}
		\item \(\Rbag\big((\clQ,t) : \clA\tdto\clB\big)\) as the machine obtained from  a \twoDash transducer \((\clQ,t)\) as follows:
		      \begin{itemize}
			      \item first, take the colimit \(\int t = \colim_{\opid\clQ} \big(t(\blank,q,q)\big) : \clA\times(\clB^*)^\op\to\Set\) (not the coend, see below);
			      \item then, turn \(\int t\) into a two-sided discrete fibration
			            \[\vxy{
					            &\clE(\int t)\ar[dr]^{p_t}\ar[dl]_{q_t}& \\
					            \clA && \clB^*,
				            }\label{eq:discrete_fibration_from_colimit}\]
			            using the well-known equivalence of categories.
		      \end{itemize}
		\item \(\bsu(\clE;q,p)\) as the \twoDash transducer obtained from a two-sided discrete fibration regarding it as a profunctor \(\clA\times(\clB^*)^\op\to\Set\), and thus as a transducer
		      \[\vxy{
				      \clA\times\opid\clI(\clB^*)^\op\ar[r] & \Set.
			      }\]
	\end{itemize}
	Now start with a transducer \((\clQ,t)\) and apply \(\bsu\) to \(\Rbag(\clQ,t)=(\int t,p_t,q_t)\); then there is a transformation with components
	\[\vxy{\eta : (\clQ,t) \ar@{=>}[r] & \bsu\big(\Rbag(\clQ,t)\big)}\]
	in turn having components
	\[\vxy{
			\eta^2 : t(\una,q,q')(\unb) \ar[r] & \colim_{qq'}t(\una,q,q')(\unb)
		}\]
	induced by the initial cocone defining \(\int t\) (defining it as the coend would have given just a cowedge here, not a natural transformation), and giving rise to a 2-cell in \(\TwoTDX\) of shape
	\[\vxy[@R=5mm]{
			\clA\times\opid\clQ\ar[dr] \ar[dd]\drtwocell<\omit>{<3>\eta^2}& \\
			&\freerig\clB\\
			\clA\times\opid\clI\ar[ur] &
		}\]
	yielding the desired unit of the reflection; the counit is indeed an isomorphism, as the composition \(\Rbag\big(\bsu(\clE;q,g)\big)\) consists of starting with a two-sided discrete fibration, regard it as a profunctor, and revert it back to a two-sided discrete fibration.

	In a fibrational perspective, we are identifying \(\Prof\) with the fiber at \(\clI\) of a functor \(\TwoTDX(\clA,\clB)\to\Cat:(\clQ,\blank)\mapsto\clQ\); this fiber is reflective.
\end{proof}
\section{Future plans}\label{future}
We end the paper sketching two possible directions for future investigation. Admittedly, some parts of it remain to this day a bit conjectural, but we believe there is some deep observation lurking behind them, that deserves mention, and can further convince the reader that categorification allowed to unveil an interesting pattern.
\subsection{A more general double category of transducers}\label{sec:more-general}
Along the discussion so far, we studied transducers between free objects, in an attempt to retain some intuition about the classical theory of processes as representations of a free monoid on an alphabet.

There have been attempts (cf. the former Guitart approach in \cite{guitart1974remarques}) where such a restraint was abandoned, considering processes between non-free monoids. We would like to study where this idea goes, and define a \emph{double category} of transducers between general monoidal categories. Part of the motivation for doing so is the same as Guitart's: most of the results outlined so far extend without much effort, \ie they are completely independent on the freeness assumption on the domain and codomain of a \twoDash transducer, but the embedding result of \(\Mly(\Cat)\) relies on a more refined compatibility between the output map \(S\) of a span
\[\vxy{
		\clX & \clM\times\clX \ar[r]^-S \ar[l]_-D & \clN,
	}\]
namely the property that there is a natural isomorphism in \(\clN\),
\[\vxy{\fugue SMX{M'} \ar[r]^-{\phi_{XMM'}} & S(M\otimes M',X)}\]
subject to suitable compatibility conditions.

\medskip
Another reason to indulge in such a maximal level of generality is that this refined notion, albeit burdening the analysis with a lot of new coherence problems, is neither new nor artificial. It first appeared in \cite{guitart1974remarques} without particular explanation besides its utility, and was further developed in \cite[§2.1]{EPTCS397.1}, where the condition was termed the \emph{fugal property} for \( S \).\footnote{Guitart does not give a name to this property; in \cite{EPTCS397.1} we call it \emph{fugality} for \(s\) (\cf [\emph{ibi}] for a thorough description of the matter); it can be motivated with the fact that this requirement on \(s\) is necessary and sufficient for \(s\) to induce a functor \(\clE[d^+]\to B\), and thus a machine, where \(\clE[d^+]\) is the domain of the discrete opfibration over \(A^*\) naturally associated to the representation \(d^+\).} The same notion was further studied (and in large part mechanized in a proof-assistant) in \cite[Definition 2.18]{loregian2025monadslimitsbicategoriescircuits}, finding that in a double category modeled on \(\Mly(\Set)\), a monad map between monads \( M \) and \( N \) is exactly equivalent to a third Mealy automaton between the state spaces of \( M \) and \( N \), whose output function satisfies the fugal property. What may initially appear to be an \emph{ad hoc} modification of standard definitions, therefore, arises instead quite naturally from first principles.

\medskip
The study of a categorified notion of fugality is part of a larger work in progress, taking its initial steps from the observation that
\begin{itemize}
	\item there is a larger bicategory \(\VMealy\) where usual Mealy automata embed, outlined by Paré in his \cite{Par2010}; such a bicategory seems the most suitable candidate for a `bicategory of general automata' as it is equipped with a clear-cut universal property and extends (in the sense that there are embedding from the latter into the former) virtually all examples, such as \cite{Katis1997,guitart1974remarques} of bicategories of automata;
	\item the pseudo double category having \(\VMealy\) as bicategory of loose arrows contains (via a local coreflection, \cf \cite{Jay1988}, the notion has been exploited in \cite[Lemma 2.13]{EPTCS397.1}) all the double categories outlined here, the double category of `circuits' outlined in \cite{loregian2025monadslimitsbicategoriescircuits}, as well as the double categorical analogue of Guitart's \(\MAC\).
\end{itemize}
\begin{definition}[The double category of transducers between monoidal categories]
	\label{def:double-category-transducers}
	The \emph{double category of monoidal transducers} \(\bbM\DTDX\) is defined as follows:
	\begin{itemize}
		\item \textbf{0-cells} are monoidal categories \(\clM,\clN,\dots\);
		\item a tight cell \(F : \clM\to\clN\) is a strong monoidal functor between the underlying categories of \(\clM\) and \(\clN\);
		\item a loose cell \((\clQ,t) : \clM \tdto \clN\) are pairs where \(\clQ\) is a category and \(t\) a functor of type
		      \[\vxy{
				      t : \clM \times \opid{\clQ} \times \clN^\op \ar[r] & \clV
			      }\]
		\item a cell  \((u,\alpha)\) with frame
		      \[\vxy{
			      \clM \ar[d]_F \ar[r]|-@{*}^-{(\clQ,s)}& \clN\ar[d]^G \\
			      \clM' \ar[r]|-@{*}_-{(\clP,t)}& \clN'
			      }\]
		      consists of a pair where \(u : \clQ\to\clP\) is a functor and \(\alpha\) is a natural transformation with components
		      \[\vxy{
				      \alpha : s(a,q,q')(b) \ar[r] & t(Fa,uq,uq')(Gb),
			      }\]
		      \ie filling the diagram
		      \[\vxy{
				      \clM \times \opid{\clQ} \times \clN^\op \ar[dr]^s\ar[dd]_{F\times\opid u\times G}\drtwocell<\omit>{<4>\alpha}&\\
				      & \clV\\
				      \clM' \times \opid{\clP} \times (\clN')^\op \ar[ur]_t
			      }\]
	\end{itemize}
\end{definition}
\subsection{More on the linear algebra, and representation theory, of profunctors}
The mention of `structure constants' in \autoref{remark_structure_constants_of_a_2_transducer} gives some leeway to speculate on the theory of transducers from a representation\hyp{}theoretic point of view; in particular, what is their geometric meaning when they are interpreted as categorified matrices?

If in \autoref{1_transduc} the base semiring \(\bbK\) is taken to be a field (better, algebraically closed of characteristic zero), a 1-transducer as in \eqref{generic_1_tdx} consists precisely of a representation of the free monoid \(A^*\) into the Lie algebra of \(n\times n\) matrices valued in Laurent series over \(\bbK\).

One classical context where such Laurent-series-valued matrices arise is the study of linear ODEs with regular singularities, and especially meromorphic connections on vector bundles \cite{Wasow1965, Levelt1971,Deligne1970, Malgrange1991,Boalch2001, Sabbah2007}, a well\hyp{}established area of algebro-geometric approaches to the analysis of ODEs.

In short: when given a differential equation of type
\[Y'(z) = A(z)Y(z)\]
where \(A(z)\in\fkg\fkl_n(\laurent{\bbC}{z})\)
one attempts to classify its solutions looking at the structure of \(A\), thought as a matrix of germs of meromorphic functions. If, say, \(A(z)\) has `order 1', \ie it admits an expansion \(\frac{A_{-1}}z + A_0 + A_1 z + \dots\) then the constant matrix \(A_{-1}\) describes the \emph{monodromy} of the differential equation. For higher orders, the equation has \emph{irregular} singularities the classification of which requires refined (highly nontrivial) tools.

\medskip
Motivated by this perspective, we wonder what is the theory of transducers from a `Lie-theoretic perspective'; for example, the structure constants \((t_a \mid a \in A)\) of \autoref{remark_structure_constants_of_a_2_transducer} uniquely determine \(t\) due to the freeness property of \(A^*\); one can study the subset of \(A\) yielding pairwise-commuting \(t_a\)'s (under composition of matrices/profunctors); the elements of such a set are simultaneously diagonalizable, a property which turns out to be a cornerstone integrable systems theory \cite{Kostant1979, BabelonBernardTalon2003}.

Or, again over the field of Laurent series with complex coefficients, any matrix-valued function \(\Phi(z)\) in \(\fkg\fkl_n(K)\) gives rise to a \emph{spectral curve} \cite{Beauville1995, DonagiMarkman1996}, defined by means of the characteristic equation \(\det(\lambda I - \Phi(z)) = 0\). This allows the study of the parametric family of operators given by \(\Phi\) in terms of line bundles over the spectral curve.

To each 1-transducer \(t\) one can associate the function \(C_t : A\to K[\lambda]\) so that \(C_t(a) = \det(\lambda I - t_a(z))\), and study the locus of \(z\) for which \(C_t(a)\) vanishes, be it as a mere subset or as a subspace.\footnote{For example if \(A\) is a topological space, the list construction yields a topological monoid \(A^*\) \cite{Goubault2019}, of which the \emph{commutant}
	\[\mathrm{Comm}(A) := \left\{ a \in A \mid [t_a, t_b] = 0 \text{ for all } b \in A \right\}.\]
	is a closed subspace.}

\medskip
We leave the suggestive open question of whether there is something to gain from this formal analogy, and what are the interesting `Lie theoretic' invariants to attach a 1-transducer, open for future investigation.

\hrulefill
\subsubsection*{Acknowledgments}
This work was supported by the ESF funded Estonian IT Academy research measure (project 2014-2020.4.05.19-0001).

I am grateful to Nathanael Arkor for organizing the weekly seminar on double category theory at TalTech---without which I would not have gained sufficient command of the theory to attempt this work---and for his thorough proofreading of the manuscript. Any remaining mistakes are solely my responsibility.

My interest in this problem was rekindled in April 2025 during the Dagstuhl Seminar 25141, \emph{Categories for Automata and Language Theory}. A week of cocontinuous discussions with Paul-André Melliès, Daniela Petri\c{s}an, Victor Iwaniack, Zeinab Galal, Noam Zeilberger, and many others shaped several of the ideas presented here. I warmly thank them all. Stefano Kasangian provided useful feedback and a pointer to \cite{Kasangian2010}.
\putbib{refs}{amsalpha}

\newcommand{\etalchar}[1]{$^{#1}$}
\providecommand{\bysame}{\leavevmode\hbox to3em{\hrulefill}\thinspace}
\providecommand{\MR}{\relax\ifhmode\unskip\space\fi MR }
\providecommand{\MRhref}[2]{%
  \href{http://www.ams.org/mathscinet-getitem?mr=#1}{#2}
}
\providecommand{\href}[2]{#2}
\begin{thebibliography}{BLLL23b}

\bibitem[ABM14]{Agore_2014}
A.~L. Agore, C.-G. Bontea, and G.~Militaru, \emph{Classifying bicrossed
  products of {H}opf algebras}, Algebras and Representation Theory (2014).

\bibitem[ACIM09]{Agore2009forfinite}
A.~L. Agore, A.~Chirvăsitu, B.~Ion, and G.~Militaru, \emph{Bicrossed products
  for finite groups}, Algebras and Representation Theory \textbf{12} (2009),
  no.~2–5, 481–488.

\bibitem[AK88]{Albert1988}
M.H. Albert and G.M. Kelly, \emph{The closure of a class of colimits}, Journal
  of Pure and Applied Algebra \textbf{51} (1988), no.~1–2, 1–17.

\bibitem[AM10]{Agore_2010}
A.~L. Agore and G.~Militaru, \emph{Extending structures {I}: Unifying crossed
  and bicrossed products}, 2010.

\bibitem[AM12]{Agore_2009}
\bysame, \emph{Schreier type theorems for bicrossed products}, Open Mathematics
  \textbf{10} (2012), no.~2, 722--739 (eng).

\bibitem[AMV11]{adamek2011coalgebraic}
J.~Ad{\'a}mek, S.~Milius, and J.~Velebil, \emph{Coalgebraic analysis of
  probabilistic systems}, Theoretical Computer Science \textbf{412} (2011),
  no.~38, 4989--5009.

\bibitem[BBT03]{BabelonBernardTalon2003}
O.~Babelon, D.~Bernard, and M.~Talon, \emph{Introduction to classical
  integrable systems}, Cambridge University Press, Cambridge, 2003.

\bibitem[Bea95]{Beauville1995}
A.~Beauville, \emph{Jacobians of spectral curves and completely integrable
  systems}, Integrable Systems and Quantum Groups \textbf{1620} (1995), 25--31.

\bibitem[Ber13]{Berstel1979}
Jean Berstel, \emph{Transductions and context-free languages}, Vieweg+Teubner
  Verlag, 2013.

\bibitem[BFL{\etalchar{+}}23]{boccali2023semibicategory}
G.~Boccali, B.~Femić, A.~Laretto, F.~Loregian, and S.~Luneia, \emph{The
  semibicategory of {M}oore automata}, 2023.

\bibitem[BK81]{Betti1981}
R.~Betti and S.~Kasangian, \emph{A quasi-universal realization of automata.},
  Universit{\`a} degli Studi di Trieste. Dipartimento di Scienze Matematiche
  (1981), 41--48.

\bibitem[BK82]{Betti1982}
\bysame, \emph{Una proprietà del comportamento degli automi completi.},
  Universit{\`a} degli Studi di Trieste. Dipartimento di Scienze Matematiche
  (1982), 17--26.

\bibitem[BLL98]{bergeron1998combinatorial}
F.~Bergeron, G.~Labelle, and P.~Leroux, \emph{Combinatorial species and
  tree-like structures}, Encyclopedia of Mathematics and its Applications,
  no.~67, Cambridge University Press, Cambridge, 1998.

\bibitem[BLLL23a]{EPTCS397.1}
G.~Boccali, A.~Laretto, F.~Loregian, and S.~Luneia, \emph{Bicategories of
  automata, automata in bicategories}, Proceedings of the Sixth International
  Conference on Applied Category Theory 2023 (University of Maryland) (Sam
  Staton and Christina Vasilakopoulou, eds.), Electronic Proceedings in
  Theoretical Computer Science, vol. 397, Open Publishing Association, 2023,
  pp.~1--19.

\bibitem[BLLL23b]{boccali_et_al:LIPIcs.CALCO.2023.20}
\bysame, \emph{Completeness for categories of generalized automata}, 10th
  Conference on Algebra and Coalgebra in Computer Science (CALCO 2023)
  (Dagstuhl, Germany) (Paolo Baldan and Valeria de~Paiva, eds.), Leibniz
  International Proceedings in Informatics (LIPIcs), vol. 270, Schloss Dagstuhl
  -- Leibniz-Zentrum f{\"u}r Informatik, 2023, pp.~20:1--20:14.

\bibitem[BMT23]{baez2023schurfunctorscategorifiedplethysm}
J.C. Baez, J.~Moeller, and T.~Trimble, \emph{Schur functors and categorified
  plethysm}, 2023.

\bibitem[BMT24]{baez20242rigextensionssplittingprinciple}
John~C. Baez, Joe Moeller, and Todd Trimble, \emph{2-rig extensions and the
  splitting principle}, 2024.

\bibitem[Boa01]{Boalch2001}
P.~Boalch, \emph{Symplectic manifolds and isomonodromic deformations}, Advances
  in Mathematics \textbf{163} (2001), no.~2, 137--205.

\bibitem[Bor94]{Bor2}
F.~Borceux, \emph{Handbook of categorical algebra 2. (categories and
  structures)}, Encyclopedia of Mathematics and its Applications, vol.~51,
  Cambridge University Press, Cambridge, 1994.

\bibitem[BS00]{Benabou2000}
J.~Bénabou and T.~Streicher, \emph{Distributors at work}, Lecture notes
  written by Thomas Streicher, 2000.

\bibitem[Cho04]{Choffrut2004}
C.~Choffrut, \emph{Rational relations as rational series}, p.~29–34,
  Springer, Berlin, 2004.

\bibitem[Del70]{Deligne1970}
P.~Deligne, \emph{Équations différentielles à points singuliers réguliers},
  Lecture Notes in Mathematics, vol. 163, Springer, Berlin, 1970.

\bibitem[DL07]{DAY2007651}
B.J. Day and S.~Lack, \emph{Limits of small functors}, Journal of Pure and
  Applied Algebra \textbf{210} (2007), no.~3, 651--663.

\bibitem[DM96]{DonagiMarkman1996}
R.~Donagi and E.~Markman, \emph{Spectral covers, algebraically completely
  integrable, hamiltonian systems, and moduli of bundles}, Integrable Systems
  and Quantum Groups \textbf{1620} (1996), 1--119.

\bibitem[DMS19]{Dorsch2018GradedMA}
U.~Dorsch, S.~Milius, and L.~Schr\"{o}der, \emph{{G}raded monads and graded
  logics for the linear time - branching time spectrum}, 30th International
  Conference on Concurrency Theory (CONCUR 2019) (Dagstuhl, Germany), LIPIcs,
  vol. 140, 2019, pp.~36:1--36:16.

\bibitem[Eis02]{eisner2002parameter}
J.~Eisner, \emph{Parameter estimation for probabilistic finite-state
  transducers}, Proceedings of the 40th Annual Meeting of the Association for
  Computational Linguistics, 2002, pp.~1--8.

\bibitem[Elg21]{elgueta2020groupoid}
J.~Elgueta, \emph{The groupoid of finite sets is biinitial in the 2-category of
  rig categories}, Journal of Pure and Applied Algebra \textbf{225} (2021),
  no.~11, 106738.

\bibitem[FKM16]{Fujii2016}
S.~Fujii, S.~Katsumata, and P.-A. Melli{\`e}s, \emph{Towards a formal theory of
  graded monads}, FoSSaCS. Foundations of Software Science and Computation
  Structures (B.~Jacobs and C.~L{\"o}ding, eds.), 2016, pp.~513--530.

\bibitem[Fuj19]{fujii20192categorical}
S.~Fujii, \emph{A 2-categorical study of graded and indexed monads}, 2019,
  \arXivPreprint{1904.08083}.

\bibitem[GJ17]{Gambino2017}
N.~Gambino and A.~Joyal, \emph{On operads, bimodules and analytic functors},
  Memoirs of the American Mathematical Society \textbf{249} (2017), no.~1184,
  0–0.

\bibitem[GKO{\etalchar{+}}16]{10.1145/3022670.2951939}
M.~Gaboardi, S.~Katsumata, D.~Orchard, F.~Breuvart, and T.~Uustalu,
  \emph{Combining effects and coeffects via grading}, SIGPLAN Not. \textbf{51}
  (2016), no.~9, 476–489.

\bibitem[GKOS21]{Gaboardi2021}
M.~Gaboardi, S.~Katsumata, D.~Orchard, and T.~Sato, \emph{Graded {H}oare logic
  and its categorical semantics}, p.~234–263, Springer International
  Publishing, Luxembourg, 2021.

\bibitem[GM19]{Goubault2019}
E.~Goubault and S.~Mimram, \emph{Presentations of higher-dimensional automata},
  Logical Methods in Computer Science \textbf{15} (2019), no.~1, 1--38.

\bibitem[GP99]{grandispare1999limits}
M.~Grandis and R.~Paré, \emph{Limits in double categories}, Cahiers de
  Topologie et G\'eom\'etrie Diff\'erentielle Cat\'egoriques \textbf{40}
  (1999), no.~3, 162--220 (en). \MR{1716779}

\bibitem[GPS95]{coherence-tricat}
R.~Gordon, A.J. Power, and R.~Street, \emph{Coherence for tricategories}, Mem.
  Amer. Math. Soc. \textbf{117} (1995), no.~558, vi+81.

\bibitem[Gri07]{grillet}
P.~Grillet, \emph{Abstract algebra}, 2007, Graduate Texts in Mathematics,
  Springer New York.

\bibitem[GS16]{Garner2016}
R.~Garner and M.~Shulman, \emph{Enriched categories as a free cocompletion},
  Advances in Mathematics \textbf{289} (2016), 1–94.

\bibitem[Gui74]{guitart1974remarques}
R.~Guitart, \emph{Remarques sur les machines et les structures}, Cahiers de
  topologie et g\'eom\'etrie diff\'erentielle \textbf{15} (1974), no.~2,
  113--144 (fr). \MR{384891}

\bibitem[Gui78]{guitart1978bimodules}
\bysame, \emph{Des machines aux bimodules}, Univ. Paris 7, apr 1978.

\bibitem[Gui80]{Guitart1980}
\bysame, \emph{Tenseurs et machines}, Cahiers de topologie et g\'eom\'etrie
  diff\'erentielle \textbf{21} (1980), no.~1, 5--62 (fr). \MR{569117}

\bibitem[Han10]{silva2010subsequential}
H.H. Hansen, \emph{Subsequential transducers: a coalgebraic perspective},
  Information and Computation \textbf{208} (2010), no.~12, 1368–1397.

\bibitem[Hor24]{hora2024topoiautomataitopoi}
R.~Hora, \emph{Topoi of automata {I}: Four topoi of automata and regular
  languages}, 2024.

\bibitem[HS19]{hansen2019constructingsymmetricmonoidalbicategories}
L.W. Hansen and M.~Shulman, \emph{Constructing symmetric monoidal bicategories
  functorially}, 2019.

\bibitem[Hum72]{humphreys1972introduction}
J.E. Humphreys, \emph{Introduction to {L}ie algebras and representation
  theory}, Graduate Texts in Mathematics, vol.~9, Springer, Berlin, 1972.

\bibitem[IR19]{Ibort2019}
A.~Ibort and M.A. Rodríguez, \emph{An introduction to groups, groupoids and
  their representations}, October 2019.

\bibitem[Iwa24]{Iwaniack2024}
V.~Iwaniack, \emph{Automata in {$W$}-toposes, and general {M}yhill-{N}erode
  theorems}, p.~93–113, Springer Nature, Switzerland, 2024.

\bibitem[Jay88]{Jay1988}
C.B. Jay, \emph{Local adjunctions}, Journal of Pure and Applied Algebra
  \textbf{53} (1988), no.~3, 227–238.

\bibitem[JB88]{book91519310}
C.~Reutenauer J.~Berstel, \emph{Rational series and their languages}, 2008
  electronic ed., EATCS Monographs on Theoretical Computer Science 12,
  Springer, Berlin, 1988.

\bibitem[Kar18]{karvonen2019waydagger}
M.~Karvonen, \emph{The way of the dagger}, Phd thesis, University of Edinburgh,
  School of Informatics, Laboratory for Foundations of Computer Science,
  Edinburgh, UK, 2018.

\bibitem[Kel05]{kelly}
G.M. Kelly, \emph{Basic concepts of enriched category theory}, Repr. Theory
  Appl. Categ. \textbf{64} (2005), no.~10, vi+137. \MR{2177301}

\bibitem[KKR83]{kasangian1983cofibrations}
S.~Kasangian, G.M. Kelly, and F.~Rossi, \emph{Cofibrations and the realization
  of non-deterministic automata}, Cahiers de topologie et g{\'e}om{\'e}trie
  diff{\'e}rentielle cat{\'e}goriques \textbf{24} (1983), no.~1, 23--46.

\bibitem[KL80]{Kelly1980}
G.M. Kelly and L.~Laplaza, \emph{Coherence for compact closed categories},
  Journal of Pure and Applied Algebra \textbf{19} (1980), 193--213.

\bibitem[KL10]{Kasangian2010}
S.~Kasangian and A.~Labella, \emph{Conduché property and {$Tree$}-based
  categories}, Journal of Pure and Applied Algebra \textbf{214} (2010), no.~3,
  221–235.

\bibitem[Kos79]{Kostant1979}
B.~Kostant, \emph{The solution to a generalized {T}oda lattice and
  representation theory}, Advances in Mathematics \textbf{34} (1979), no.~3,
  195--338.

\bibitem[KR90]{Kasangian1990}
S.~Kasangian and R.~Rosebrugh, \emph{Glueing enriched modules and composition
  of automata}, Cahiers de topologie et g{\'e}om{\'e}trie diff{\'e}rentielle
  cat{\'e}goriques \textbf{31} (1990), no.~4, 283--290.

\bibitem[KS85]{Kuich1985yp}
W.~Kuich and A.~Salomaa, \emph{Semirings, automata, languages}, 1986 ed.,
  Monographs in Theoretical Computer Science. An EATCS Series, Springer,
  Berlin, Germany, December 1985 (en).

\bibitem[KSW97]{Katis1997}
P.~Katis, N.~Sabadini, and R.F.C. Walters, \emph{Bicategories of processes},
  Journal of Pure and Applied Algebra \textbf{115} (1997), no.~2, 141--178.

\bibitem[KSW02]{Katis2002}
P.~Katis, N.~Sabadini, and R.F.C. Walters, \emph{Feedback, trace and
  fixed-point semantics}, RAIRO - Theoretical Informatics and Applications -
  Informatique Th\'eorique et Applications \textbf{36} (2002), no.~2, 181--194
  (en). \MR{1948768}

\bibitem[KSW10]{Katis2010}
\bysame, \emph{Feedback, trace and fixed-point semantics}, RAIRO - Theoretical
  Informatics and Applications \textbf{36} (2010), no.~2, 181--194 (eng).

\bibitem[Lap72]{laplaza1972coherence}
M.L. Laplaza, \emph{Coherence for distributivity}, Coherence in categories,
  Springer, Berlin, 1972, pp.~29--65.

\bibitem[Lev71]{Levelt1971}
A.~H.~M. Levelt, \emph{Jordan decomposition for a class of singular
  differential operators}, Arkiv för Matematik \textbf{13} (1971), no.~1,
  1--27.

\bibitem[LGR{\etalchar{+}}21]{openTransitionSystems21}
E.~Di Lavore, A.~Gianola, M.~Rom{\'{a}}n, N.~Sabadini, and P.~Soboci\'nski,
  \emph{A canonical algebra of open transition systems}, Formal Aspects of
  Component Software - 17th International Conference, {FACS} 2021 (Virtual
  Event) (Gwen Sala{\"{u}}n and Anton Wijs, eds.), Lecture Notes in Computer
  Science, vol. 13077, Springer, 2021, pp.~63--81.

\bibitem[Lor24]{loregian2024automata}
F.~Loregian, \emph{Automata and coalgebras in categories of species}, Lecture
  Notes in Computer Science (Switzerland), Springer Nature, 2024, p.~65–92.

\bibitem[Lor25]{loregian2025monadslimitsbicategoriescircuits}
\bysame, \emph{Monads and limits in bicategories of circuits}, 2025,
  \arXivPreprint{2501.01882}.

\bibitem[LT23]{Loregian2023}
F.~Loregian and T.~Trimble, \emph{Differential 2-rigs}, Electronic Proceedings
  in Theoretical Computer Science \textbf{380} (2023), 159–182.

\bibitem[LW25]{lucyshynwright2025vgradedcategoriesvwbigradedcategories}
R.B.B. Lucyshyn-Wright, \emph{{$V$}-graded categories and {$V$-$W$}-bigraded
  categories: Functor categories and bifunctors over non-symmetric bases},
  2025.

\bibitem[Mal91]{Malgrange1991}
B.~Malgrange, \emph{Équations différentielles à coefficients polynomiaux},
  Progress in Mathematics \textbf{96} (1991), 313--371.

\bibitem[Moh04]{Mohri2004}
M.~Mohri, \emph{Weighted finite-state transducer algorithms: An overview},
  Formal Languages and Applications (Carlos Martín-Vide, Victor Mitrana, and
  Gheorghe Paun, eds.), Studies in Fuzziness and Soft Computing, vol. 148,
  Springer, Berlin, 2004, pp.~551--564.

\bibitem[MPR01]{mohri2001weighted}
M.~Mohri, F.~Pereira, and M.~Riley, \emph{Weighted finite-state transducers in
  speech recognition}, Computer Speech \& Language \textbf{16} (2001), no.~1,
  69--88.

\bibitem[MPS15]{milius_et_al:LIPIcs:2015:5538}
S.~Milius, D.~Pattinson, and L.~Schr{\"o}der, \emph{{G}eneric trace semantics
  and graded monads}, 6th Conference on Algebra and Coalgebra in Computer
  Science ({CALCO}2015), LIPIcs 2015, 2015, pp.~253--269.

\bibitem[MU22a]{McDermott2022}
D.~McDermott and T.~Uustalu, \emph{Flexibly graded monads and graded algebras},
  p.~102–128, Springer International Publishing, Cham, 2022.

\bibitem[MU22b]{10.1007/978-3-031-16912-0_4}
\bysame, \emph{Flexibly graded monads and graded algebras}, Mathematics of
  Program Construction (Cham), Springer International Publishing, 2022,
  pp.~102--128.

\bibitem[MZ23a]{mellies:hal-04399404}
P.-A. Melli{\`e}s and N.~Zeilberger, \emph{{The categorical contours of the
  {Chomsky-Sch{\"u}tzenberger} representation theorem}}, December 2023.

\bibitem[MZ23b]{entics:10508}
P.-A. Melliès and N.~Zeilberger, \emph{Parsing as a lifting problem and the
  {Chomsky-Sch\"utzenberger} representation theorem}, Electronic Notes in
  Theoretical Informatics and Computer Science \textbf{1} (2023).

\bibitem[MZ25]{Mellis2025}
P.-A. Melliès and N.~Zeilberger, \emph{The categorical contours of the
  {Chomsky-Sch\"utzenberger} representation theorem}, Logical Methods in
  Computer Science \textbf{21} (2025).

\bibitem[OWE20]{Orchard2020}
D.~Orchard, P.~Wadler, and H.~Eades, \emph{Unifying graded and parameterised
  monads}, Electronic Proceedings in Theoretical Computer Science \textbf{317}
  (2020), 18--38.

\bibitem[Par10]{Par2010}
R.~Paré, \emph{Mealy morphisms of enriched categories}, Applied Categorical
  Structures \textbf{20} (2010), no.~3, 251–273.

\bibitem[Par21a]{Par2021}
\bysame, \emph{Morphisms of rings}, Joachim Lambek: The Interplay of
  Mathematics, Logic, and Linguistics (Claudia Casadio and Philip~J. Scott,
  eds.), Springer International Publishing, Cham, 2021, pp.~271--298.

\bibitem[Par21b]{pare2021three}
\bysame, \emph{Three easy pieces: Imaginary seminar talks in honour of {B}ob
  {R}osebrugh}, Theory and Applications of Categories \textbf{36} (2021),
  no.~6, 171--200.

\bibitem[Raj93]{rajan1993derivatives}
D.~S. Rajan, \emph{The adjoints to the derivative functor on species}, Journal
  of Combinatorial Theory, Series A \textbf{62} (1993), no.~1, 93--106.

\bibitem[RW88]{rosebrugh1988proarrows}
R.~Rosebrugh and R.J. Wood, \emph{Proarrows and cofibrations}, Journal of Pure
  and Applied Algebra \textbf{53} (1988), no.~3, 271--296.

\bibitem[Sab07]{Sabbah2007}
C.~Sabbah, \emph{Isomonodromic deformations and {F}robenius manifolds: An
  introduction}, Universitext, Springer, Berlin, 2007.

\bibitem[Ser92]{serre1992lie}
J.-P. Serre, \emph{Lie algebras and {L}ie groups}, Lecture Notes in
  Mathematics, vol. 1500, Springer, Berlin Heidelberg, 1992.

\bibitem[SS78]{Salomaa1978-sx}
A.~Salomaa and M.~Soittola, \emph{Automata-theoretic aspects of formal power
  series}, 1978 ed., Monographs in Computer Science, Springer, New York, NY,
  March 1978 (en).

\bibitem[SSVW17]{spipacchione}
P.~Schultz, D.~Spivak, C.~Vasilakopoulou, and R.~Wisnesky, \emph{Algebraic
  databases}, Theory and Applications of Categories \textbf{32} (2017), no.~16,
  547--619.

\bibitem[Wal89]{Walters1989}
R.F.C. Walters, \emph{A note on context-free languages}, Journal of Pure and
  Applied Algebra \textbf{62} (1989), no.~2, 199–203.

\bibitem[Was65]{Wasow1965}
W.~Wasow, \emph{Asymptotic expansions for ordinary differential equations},
  Pure and Applied Mathematics, Interscience Publishers, New York, 1965.

\bibitem[Woo76]{wood1976indicial}
R.J. Wood, \emph{Indicial methods for relative categories}, Ph.d. thesis,
  Dalhousie University, 1976.

\bibitem[Woo78]{Wood1978}
\bysame, \emph{{$\mathcal{V}$}-indexed categories}, p.~126–140, Springer,
  Berlin Heidelberg, 1978.

\bibitem[Woo82]{wood1982abstract}
\bysame, \emph{{A}bstract proarrows {I}}, Cahiers de topologie et géometrie
  différentielle categoriques \textbf{23} (1982), no.~3, 279--290.

\bibitem[Woo85]{wood1985proarrows}
\bysame, \emph{{P}roarrows {II}}, Cahiers de topologie et géométrie
  différentielle catégoriques \textbf{26} (1985), no.~2, 135--168.

\end{thebibliography}
\end{document}